\theoremstyle{plain}
\newtheorem{thm}{Theorem}
\newtheorem{lem}[thm]{Lemma}
\newtheorem{cor}[thm]{Corollary}
\newtheorem{prop}[thm]{Proposition}
\theoremstyle{definition}
\newtheorem{rmk}[thm]{Remark}
\numberwithin{thm}{section}
\numberwithin{equation}{thm}
\newcommand{\al}{\alpha}
\newcommand{\rank}{{\rm rank}}
\newcommand{\End}{{\rm End}}
\newcommand{\Gal}{{\rm Gal}}
\newcommand{\sB}{{\mathcal B}}
\newcommand{\sG}{{\mathcal G}}
\newcommand{\sH}{{\mathcal H}}
\newcommand{\sK}{{\mathcal K}}
\newcommand{\sL}{{\mathcal L}}
\newcommand{\A}{{\mathbb A}}
\newcommand{\B}{{\mathbb B}}
\newcommand{\C}{{\mathbb C}}
\newcommand{\F}{{\mathbb F}}
\newcommand{\G}{{\mathbb G}}
\renewcommand{\P}{{\mathbb P}}
\newcommand{\Q}{{\mathbb Q}}
\newcommand{\Z}{{\mathbb Z}}
\newcommand{\bfZ}{{\mathbf Z}}
\newcommand{\bfO}{{\mathbf O}}
\newcommand{\bfC}{{\mathbf C}}
\newcommand{\Irr}{{\mathrm{Irr}}}
\newcommand{\triv}{{\mathds{1}}}
\newcommand{\CC}{\mathbb C}
\newcommand{\Out}{\mathrm {Out}}
\newcommand{\PSp}{\mathrm{PSp}}
\newcommand{\GO}{\mathrm{O}}
\newcommand{\Gm}{{\mathbb G}_m}
\newcommand{\Aut}{\mathrm{Aut}}
\newcommand{\SU}{\mathrm{SU}}
\newcommand{\GL}{\mathrm{GL}}
\newcommand{\SL}{\mathrm{SL}}
\newcommand{\SO}{\mathrm{SO}}
\newcommand{\PSL}{\mathrm{PSL}}
\newcommand{\Co}{\mathsf{Co}}
\newcommand{\Suz}{\mathsf{Suz}}
\newcommand{\Alt}{\mathsf {A}}
\newcommand{\Swan}{{\mathsf {Swan}}}
\newcommand{\Wild}{{\mathsf {Wild}}}
\newcommand{\Tame}{{\mathsf {Tame}}}
\begin{document}
\title[
$2.{\sf{Co}}_1$ and  $6.\Suz$]{
A rigid local system with monodromy group the big Conway group 
$2.\Co_1$ and two others  with monodromy group the Suzuki group $6.{\sf{Suz}}$}
\author{Nicholas M. Katz, Antonio Rojas-Le\'{o}n, and Pham Huu Tiep}
\address{Department of Mathematics, Princeton University, Princeton, NJ 08544, USA}
\email{nmk@math.princeton.edu}
\address{Departamento de \'{A}lgebra, Universidad de Sevilla, c/Tarfia s/n, 41012 Sevilla, Spain}
\email{arojas@us.es}
\address{Department of Mathematics, Rutgers University, Piscataway, NJ 08854, USA}
\email{tiep@math.rutgers.edu}
\thanks{The second author was partially supported by MTM2016-75027-P (Ministerio de Econom\'ia y Competitividad) and FEDER. The third author gratefully acknowledges the support of the NSF (grant 
DMS-1840702).}

\maketitle

\begin{abstract} 
In the first three sections, we develop some basic facts about hypergeometric sheaves on the multiplicative group $\G_m$ in characteristic $p >0$. In the fourth and fifth sections, we specialize to quite special classses of hypergeomtric sheaves. We give relatively ``simple" formulas for their trace functions, and a criterion for them to have finite monodromy. In the next section, we prove that three of them have finite monodromy groups.We then give some results on finite complex linear groups.
 We next use these group theoretic results to show that one of our local systems, of rank $24$ in characteristic $p=2$, has the big Conway group $2.\Co_1$, in its irreducible orthogonal representation of degree $24$ as the automorphism group of the Leech lattice, as its arithmetic and geometric monodromy groups. Each of the other two, of rank $12$ in characteristic $p=3$, has the Suzuki group $6.\Suz$, in one of its irreducible representations of degree $12$ as the $\Q(\zeta_3)$-automorphisms of the Leech lattice, as its arithmetic and geometric monodromy groups. In the final section, we pull back these local systems by $x 
 \mapsto x^N$ maps to $\A^1$, and show that after pullback their arithmetic and geometric monodromy groups remain the same. Sadly the Leech lattice makes no appearance in 
our arguments.
\end{abstract}

\tableofcontents

\section*{Introduction}
In the first three sections, we develop some basic facts about hypergeometric sheaves on the multiplicative group $\G_m$ in characteristic $p >0$. In the fourth and fifth sections, we specialize to quite special classses of hypergeometric sheaves. We give relatively ``simple" formulas for their trace functions, and a criterion for them to have finite monodromy. In the next section, we prove that three of them have finite monodromy groups. We then give some results on finite complex linear groups.
 We next use these group theoretic results to show that one of our local systems, of rank $24$ in characteristic $p=2$, has the big Conway group $2.\Co_1$, in its irreducible orthogonal representation of degree $24$ as the automorphism group of the Leech lattice, as its arithmetic and geometric monodromy groups. Each of the other two, of rank $12$ in characteristic $p=3$, has the Suzuki group $6.\Suz$, in one of its irreducible representations of degree $12$ as the $\Q(\zeta_3)$-automorphisms of the Leech lattice, as its arithmetic and geometric monodromy groups. In the final section, we pull back these local systems by $x 
 \mapsto x^N$ maps to $\A^1$, and show that after pullback their arithmetic and geometric monodromy groups remain the same. Sadly the Leech lattice makes no appearance in 
our arguments.

\section{Primitivity}
We consider, in characteristic $p > 0$, a $\overline{\Q}_{\ell}$ ($\ell \neq p$) -hypergeometric sheaf $\sH$ of type $(n,m)$, with $n > m > 0$, thus
$$\sH=\sH yp(\psi,\chi_1,\ldots ,\chi_n;\rho_1,\ldots ,\rho_m).$$
Here $\psi$ is a nontrivial additive character of some finite extension $\F_q/\F_p$, and the $\chi_i$ and $\rho_j$ are (possibly
trivial) multiplicative characters of $\F_q^\times$, with the proviso that no $\chi_i$ is any $\rho_j$. One knows \cite[8.4.2, (1)]{Ka-ESDE} that such an $\sH$ is lisse on $\G_m$, geometrically irreducible, and on $\G_m/\overline{\F_q}$ has Euler characteristic $-1$. Its local monodromy at $0$ is of finite order if and only if the $\chi_i$ are pairwise distinct, in which case that local monodromy is their direct sum $\oplus_i \chi_i$, cf. \cite[8.4.2, (5)]{Ka-ESDE}.
 Its local monodromy
at $\infty$ is of finite order if and only the $\rho_j$ are pairwise distinct, in which case that local monodromy is the direct sum of
$\oplus_j \rho_j$ with a totally wild representation $\Wild_{n-m}$ of rank $n-m$ and Swan conductor one, i.e. it has all $\infty$-breaks $1/(n-m)$. The necessity results from the fact that any repetition of the $\rho_j$ produces nontrivial Jordan blocks. The sufficiency is given by the following lemma.

\begin{lem}\label{I finite}If the  $\rho_j$ are pairwise distinct, then the $I(\infty)$ action on $\sH$ factors through a finite quotient of  $I(\infty)$.
\end{lem}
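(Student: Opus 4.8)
The plan is to reduce, via the description of the local monodromy of $\sH$ at $\infty$ recalled just above, to the totally wild summand, and then to read off finiteness of its monodromy from the explicit shape of that summand. Since the $\rho_j$ are pairwise distinct, as a representation of $I(\infty)$ we have a direct‑sum decomposition into $I(\infty)$‑modules $\sH\cong\bigl(\bigoplus_{j=1}^{m}\rho_j\bigr)\oplus\Wild_{n-m}$, the two summands being the $0$‑slope part and the $\tfrac1{n-m}$‑slope part. Each $\rho_j$ is a character of the finite group $\F_q^{\times}$, hence of finite order, so the first summand already factors through a finite quotient of $I(\infty)$, and it remains to treat $\Wild_{n-m}$. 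One useful preliminary remark: $\Wild_{n-m}$ is $I(\infty)$‑irreducible (hence semisimple as an $I(\infty)$‑module), because a nontrivial direct‑sum decomposition would split it into totally wild pieces of Swan conductor $\geq 1$ each, contradicting $\Swan(\Wild_{n-m})=1$.

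Next I would isolate what part of the problem is formal. The wild inertia $P(\infty)\trianglelefteq I(\infty)$ acts on $\Wild_{n-m}$ — indeed on any $\Ql$‑representation of $I(\infty)$ — through a finite quotient: $P(\infty)$ is pro‑$p$, and the image of a pro‑$p$ group under a continuous homomorphism to $\GL_N(\Ql)$ with $\ell\neq p$ is finite, since such an image is a compact subgroup of $\GL_N(\Ql)$, hence meets a principal congruence subgroup (a pro‑$\ell$ group) in finite index, and being also pro‑$p$ it meets it trivially. Thus the only thing really at stake is whether a lift of the tame quotient $I(\infty)/P(\infty)$ acts through a finite quotient. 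For this I would invoke the explicit form of $\Wild_{n-m}$ from \cite{Ka-ESDE}: as an $I(\infty)$‑representation it is obtained from the Artin–Schreier sheaf $\sL_{\psi}$ (whose monodromy has order $p$) and a Kummer sheaf $\sL_{\Lambda}$ attached to a character $\Lambda$ of finite order, by applying direct image along the finite map $[n-m]\colon\G_m\to\G_m$, $x\mapsto x^{n-m}$ (when $p\nmid n-m$), or more generally induction from the inertia group of an explicit finite extension, together with tensor product; these operations preserve finiteness of the monodromy, so $\Wild_{n-m}$ has finite monodromy. Combined with the tame summand, $I(\infty)$ acts on $\sH$ through a finite quotient. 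One can corroborate the tame‑quotient step by a purely arithmetic remark: since $\sH$ is defined over $\F_q$, any tame character $\eta$ of $I(\infty)/P(\infty)\cong\varprojlim_{(r,p)=1}\mu_r(\overline{\F}_q)$ that occurs is fixed by a power of Frobenius (which acts there by the $q$‑power map), forcing $\eta^{q^{s}-1}=1$ for some $s$, hence $\eta$ of finite order.

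The substantive input is the explicit identification of $\Wild_{n-m}$, which is where the real content sits — in particular the case $p\mid n-m$, in which the inducing datum is not simply a rank‑one sheaf pushed forward along a tame cover, so one must use the more delicate form of the description. I expect this to be the main obstacle, precisely because the soft ingredients do not suffice on their own: finiteness of the $P(\infty)$‑action is automatic, and finiteness of the occurring tame characters follows from arithmeticity, but one also needs the tame direction to act semisimply (otherwise a unipotent block would give infinite image), and that semisimplicity is supplied exactly by the explicit shape of $\Wild_{n-m}$ as a direct image/induction of a rank‑one object. Everything else is formal.
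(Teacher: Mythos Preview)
Your argument is correct in outline but takes a harder road than the paper. You reduce to the wild summand $\Wild_{n-m}$ and then invoke its explicit description from \cite{Ka-ESDE} as an induction/direct image of a rank-one object with finite monodromy; this works, but as you yourself flag, the case $p \mid n-m$ requires the more delicate form of that description, and you do not actually carry it out. The paper's proof uses none of this explicit structure. It uses only the \emph{irreducibility} of $\Wild_{n-m}$ (which you also observe), so that the full $I(\infty)$-representation $(\oplus_j\rho_j)\oplus\Wild_{n-m}$ is completely reducible, together with Grothendieck's local monodromy theorem: since the $I(\infty)$-action is the restriction of an action of the decomposition group $D(\infty)$, it becomes unipotent on some open normal subgroup of $I(\infty)$; being also semisimple there (restriction of a semisimple representation to a normal subgroup), it is trivial there. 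This dispatches the lemma with no case analysis on $n-m$ and no knowledge of the shape of $\Wild_{n-m}$ beyond its irreducibility. Your route has the compensating virtue that the explicit form of $\Wild_{n-m}$ is used later anyway (see Lemma~\ref{inertiaimage}), so the effort is not wasted; but for the bare finiteness statement the paper's argument is shorter and uniform in $n-m$.

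One small caveat on your ``arithmetic corroboration'': the Frobenius-fixes-tame-characters observation controls only the tame direct summands $\rho_j$, not the wild part. To apply it to $\Wild_{n-m}$ you would first need to know that, on some open subgroup where $P(\infty)$ acts trivially, the representation is a sum of tame characters rather than containing a unipotent block; that semisimplicity is exactly what the local monodromy theorem (or your explicit description) supplies, so the arithmetic remark is not an independent argument.
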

\begin{proof}The $I(\infty)$ representation is the direct sum of the $n-m$ Kummer sheaves $\sL_{\rho}$ , together with a wild part $\Wild_{n-m}$ of rank $n-m$ and Swan conductor one, with all breaks $1/(n-m)$. This wild part ${\rm Wild}_{n-m}$ is $I(\infty)$-irreducible (because all its slopes are $1/(n-m)$). 
The action of $I(\infty)$ is thus completely reducible. Because this action is the restriction of an action of the
decomposition group $D(\infty)$, the local monodromy theorem assures us that on an open normal subgroup of 
$I(\infty)$, the representation becomes unipotent. But it remains completely reducible, so it becomes trivial.
\end{proof}

\begin{prop}\label{wheninduced}
Suppose that $\sH$ is  geometrically induced, i.e. that there exists a smooth connected curve 
$U/\overline{\F_q}$, a finite \'{e}tale map $\pi:U \rightarrow \G_m/\overline{\F_q}$ of degree $d \ge 2$, a lisse sheaf $\sG$ on $U$,
and an isomorphism $\sH \cong \pi_\star \sG$. Then up to isomorphism we are in one of the following situations.
\begin{itemize}
\item[(i)] {\rm (}{\bf Kummer induced}{\rm )} $U=\G_m$, $\pi$ is the $N^{\mathrm {th}}$ power map $x \mapsto x^N$ for some $N \ge 2$ prime to $p$ with $N|n$ and $N|m$,  $\sG$ is a hypergeometric sheaf of type $(n/N, m/N)$, and the lists of $\chi_i$ and of $\rho_j$ are each stable under multiplication by any chararacter $\Lambda$ of order dividing $N$.
\item[(ii)]{\rm (}{\bf Belyi induced}{\rm )} $U =\G_m \setminus \{1\}$, $\pi$ is either $x \mapsto x^A(1-x)^B$ or is $x \mapsto x^{-A}(1-x)^{-B}$,
$\sG$ is $\sL_{\Lambda(x)}\otimes \sL_{\sigma(x-1)}$ for some multiplicative characters $\Lambda$ and $\sigma$, and one of the following holds:

\begin{enumerate}[\rm(a)]
\item Both $A,B$ are prime to $p$, but $A+B =d_0p^r$ with $p \nmid d_0$ and $r \ge1$. In this case $\pi$ is  $x \mapsto x^A(1-x)^B$,
the $\chi_i$ are all the $A^{\mathrm {th}}$ roots of $\Lambda$ and all $B^{\mathrm {th}}$ roots of $\sigma$, 
and the $\rho_j$ are all the ${d_0}^{\mathrm {th}}$ roots of $(\Lambda \sigma)^{1/p^r}$.

\item $A$ is prime to $p$, $B=d_0p^r$ with $p \nmid d_0$ and $r \ge1$.  In this case $\pi$ is  $x \mapsto   x^{-A}(1-x)^{-B}$,
the $\chi_i$ are all the $(A+B)^{\mathrm {th}}$ roots of  $\Lambda \sigma$, and the $\rho_j$ are  all  the $A^{\mathrm {th}}$ roots of $\Lambda$ and 
all the ${d_0}^{\mathrm {th}}$ roots of $\sigma^{1/p^r}$.

\item $B$ is prime to $p$, $A=d_0p^r$ with $p \nmid d_0$ and $r \ge1$.  In this case $\pi$ is  $x \mapsto   x^{-A}(1-x)^{-B}$,
the $\chi_i$ are all the $(A+B)^{\mathrm {th}}$ roots of  $\Lambda \sigma$, and the $\rho_j$ are all the $B^{\mathrm {th}}$ roots of $\sigma$ together 
with all the ${d_0}^{\mathrm {th}}$ roots of $\Lambda^{1/p^r}$.
\end{enumerate}
\end{itemize}

\end{prop}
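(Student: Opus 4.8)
The plan is to pin down the cover $\pi$ from the numerical invariants of $\sH$, and then read off the characters $\chi_i,\rho_j$ from the local monodromy of $\pi_\star\sG$ at $0$ and at $\infty$. Since $\pi$ is finite, $\pi_\star=\pi_!$, so $\chi_c(U,\sG)=\chi_c(\G_m/\overline{\F_q},\sH)=-1$. Write $\bar U$ for the smooth compactification of $U$, $g=g(\bar U)$, $S=\bar U\setminus U=\bar\pi^{-1}(\{0,\infty\})$, $s=|S|$, and $r=\rank(\sG)=n/\deg\pi$. The Grothendieck--Ogg--Shafarevich formula gives
\[
-1 \;=\; r(2-2g-s) \;-\; \sum_{x\in S}\mathrm{Swan}_x(\sG).
\]
Since $0$ and $\infty$ each have a preimage under $\bar\pi$ we have $s\ge2$; as $r\ge1$ and each $\mathrm{Swan}_x(\sG)\ge0$, this forces $g=0$ and exactly one of: \emph{(A)} $s=2$ and $\sum_x\mathrm{Swan}_x(\sG)=1$; or \emph{(B)} $s=3$, $r=1$, and $\sG$ is everywhere tame. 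I will also repeatedly use that, because $\sH$ is tame at $0$, every preimage of $0$ under $\pi$ is tamely ramified, while because $\mathrm{Swan}_\infty(\sH)=1$ some preimage of $\infty$ is wildly ramified.

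\emph{Case (A): Kummer induction.} Here $\bar\pi\colon\P^1\to\P^1$ is totally ramified over $0$ and over $\infty$ and étale over $\G_m$; its ramification divisor has degree $2\deg\pi-2$ and is supported on the two points of $S$, each contributing at least $\deg\pi-1$, so Riemann--Hurwitz forces both ramifications to be tame of index $d:=\deg\pi$ (hence $p\nmid d$), with no other ramification. After rescaling the coordinate on $U\cong\P^1$, $\pi$ becomes $x\mapsto x^{N}$ with $N=d$, so $N\mid n$. Now $\sG$ is geometrically irreducible (as $\sH=\pi_\star\sG$ is), lisse on $\G_m$, tame at $0$, and has $\mathrm{Swan}_\infty(\sG)=1$ — the last two inherited from $\sH$ through the tame cover $[N]$ — hence $\sG$ is a hypergeometric sheaf of type $(n/N,m')$ for some $m'$, by Katz's characterization of hypergeometric sheaves \cite{Ka-ESDE}. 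Since $[N]$ is totally tamely ramified over $0$ and over $\infty$, the local monodromy of $[N]_\star\sG$ at either point is the induction of that of $\sG$ through a tame degree-$N$ extension: its tame part is the union, over the local characters $\theta$ of $\sG$, of the $N$ characters $\eta$ with $\eta^{N}=\theta$, and its wild part at $\infty$ is $[N]_\star$ of the wild part of $\sG$. Matching this against the known local shape of $\sH$ at $\infty$ yields $m'=m/N$ (so $N\mid m$, and $m'\ge1$ since $m\ge1$, whence $r\ge2$), exhibits $\{\chi_i\}$ and $\{\rho_j\}$ as unions of full sets of $N$th roots of characters — hence stable under multiplication by any character of order dividing $N$ — and identifies $\sG$ with the hypergeometric of type $(n/N,m/N)$ whose characters are the $N$th powers of the $\chi_i$, resp.\ of the $\rho_j$. (Equivalently, one can argue from $[N]^\star\sH\cong\bigoplus_{\zeta\in\mu_N}[\zeta]^\star\sG$.)

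\emph{Case (B): Belyi induction.} Here $\sG$ has rank $1$ and is everywhere tame, so after an automorphism of $U\cong\P^1$ carrying $S$ onto $\{0,1,\infty\}$ we have $U=\G_m\setminus\{1\}$ and $\sG=\sL_{\Lambda(x)}\otimes\sL_{\sigma(x-1)}$ for unique $\Lambda,\sigma$. The cover $\bar\pi$ has degree $d=n$, is étale over $\G_m$, and maps $\{0,1,\infty\}$ into $\{0,\infty\}$, so one of $0,\infty$ is totally ramified of index $d$ and the other has the two remaining points above it; separability of $\pi$ forbids $p$ from dividing two of $A,B,A+B$ simultaneously. If $\infty$ is totally ramified, write $A,B$ for the pole orders over $0$, so the index over $\infty$ is $A+B$: then tameness over $0$ gives $p\nmid A$, $p\nmid B$, wildness over $\infty$ gives $p\mid A+B$, and $\bar\pi=c\,x^{A}(1-x)^{B}$ — this is (a). If $0$ is totally ramified, write $A,B$ for the two ramification indices over $\infty$ (so $A+B=d$): then tameness over $0$ gives $p\nmid A+B$, wildness over $\infty$ gives that exactly one of $A,B$ is divisible by $p$, and $\bar\pi=c\,x^{-A}(1-x)^{-B}$ — this is (b) or (c) accordingly. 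The scalar $c$, which comes from the coordinate on the target $\P^1$ that I am not free to change, is absorbed into the choice of $\psi$.

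\emph{Reading off the characters; the hard point.} In each of (a),(b),(c) I would then compute the local monodromy of $\pi_\star\sG=\sH$ at $0$ and $\infty$ from the explicit $\pi$. Over the tamely ramified point this is routine: $\pi_\star\sG$ there is the induction of a tame character through a tame extension, hence the direct sum of its roots — giving the $\chi_i$ as the $A$th roots of $\Lambda$ and the $B$th roots of $\sigma$ in case (a), and as the $(A+B)$th roots of $\Lambda\sigma$ in cases (b) and (c). Over $\infty$, at each preimage $P$ I would factor the degree-$e_P$ local extension into a tame part of degree the prime-to-$p$ part of $e_P$, followed by a wild part of degree the $p$-part $p^r$. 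The key step — and, I expect, the main obstacle — is the wild part: for a wildly ramified extension of degree $p^r$, inducing a tame character $\theta$ produces $\widetilde\theta\oplus(\text{totally wild of rank }p^r-1)$, where $\widetilde\theta=\theta^{1/p^r}$ is the unique tame character restricting to $\theta$; this rests on the fact that restriction of tame characters through such an extension is the automorphism ``raise to the $p^r$th power'' of the prime-to-$p$ character group, together with $\mathrm{Ind}(\triv)=\triv\oplus(\text{totally wild})$. Inducing the rest of the way through the tame part then produces the asserted tame part of $\sH$ at $\infty$ — the $d_0$th roots of $(\Lambda\sigma)^{1/p^r}$ in case (a), the $A$th roots of $\Lambda$ together with the $d_0$th roots of $\sigma^{1/p^r}$ in case (b), and the $B$th roots of $\sigma$ together with the $d_0$th roots of $\Lambda^{1/p^r}$ in case (c) — plus a totally wild complement of rank $n-m=d_0(p^r-1)$; that this complement equals $\Wild_{n-m}$, i.e.\ has all breaks $1/(n-m)$, is not proved locally but follows from $\sH$ being a hypergeometric sheaf. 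The bookkeeping of the first three paragraphs, and the citation to the hypergeometric characterization, are by comparison routine.
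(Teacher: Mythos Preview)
Your proof is correct and follows the same overall architecture as the paper's: compute $\chi_c(U,\sG)=-1$, force $g=0$ and $s\in\{2,3\}$, and then identify $\pi$ and read off the local monodromies. The differences are in the details of two steps, and they are worth recording.

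In the Kummer case, the paper simply asserts that a finite \'etale self-map of $\G_m$ fixing $0$ and $\infty$ is an $N$th power map with $N$ prime to $p$, and then deduces $\mathrm{Swan}_0(\sG)+\mathrm{Swan}_\infty(\sG)=1$ directly from the Euler--Poincar\'e formula before invoking \cite[8.5.3.1]{Ka-ESDE}. Your Riemann--Hurwitz argument for tameness is a pleasant alternative; your claim that tameness at $0$ and $\mathrm{Swan}_\infty(\sG)=1$ are ``inherited through the tame cover $[N]$'' is correct but the cleanest justification is exactly the paper's: the EP identity $\chi_c(U,\sG)=-1$ already gives the Swan sum.

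In the Belyi case the paper works the other way around: it first normalizes $\pi^{-1}(0)=\{0,1\}$, writes $\pi=x^A(1-x)^B$, and then computes $f'(x)=\bigl(\frac{A-(A+B)x}{x(1-x)}\bigr)f(x)$ to see that \'etaleness over $\G_m$ forces one of $p\mid A+B$, $p\mid A$, $p\mid B$; only afterwards does it swap to $x^{-A}(1-x)^{-B}$ in subcases (b),(c). Your route---split first on which of $0,\infty$ is totally ramified, then use tameness of $\sH$ at $0$ and wildness at $\infty$ to read off the $p$-divisibility of $A,B,A+B$---is logically equivalent and arguably cleaner, since it avoids the post-hoc inversion. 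Note only that your argument uses the \emph{given} \'etaleness of $\pi$ and does not re-derive it; the derivative computation in the paper is what shows these three subcases exhaust the \'etale possibilities.

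Finally, you spell out the wild-induction step (inducing a tame character through a totally wild degree-$p^r$ extension yields $\theta^{1/p^r}\oplus(\text{totally wild of rank }p^r-1)$) that the paper compresses into ``we readily compute the tame characters.'' Your justification via the identification of restriction with $\chi\mapsto\chi^{p^r}$ on the tame character group is the right one.
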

\begin{proof} If $\sH$ is $\pi_\star \sG$, then we have the equality of Euler Poincar\'{e} characteristics
$$EP(U,\sG) =EP(\G_m/\overline{\F_p},\pi_\star \sG) =EP(\G_m/\overline{\F_p},\sH)=-1.$$
Denote by $X$ the complete nonsingular model of $U$, and by $g_X$ its genus. Then $\pi$ extends to a finite flat map of $X$ to $\P^1$, and the Euler-Poincar\'{e} formula gives
$$-1=EP(U,\sG) = \rank(\sG)(2-2g_X -\#(\pi^{-1}(0) )- \#(\pi^{-1}(\infty) )) $$
$$- \sum_{w \in \pi^{-1}(0)}\Swan_w(\sG)- \sum_{w \in \pi^{-1} (\infty)}\Swan_w(\sG).$$
This shows that $g_X=0$, otherwise the coefficient of $\rank(\sG)$ is too negative. Then the sum
$$\#(\pi^{-1}(0) )+ \#(\pi^{-1}(\infty) )\le 3,$$
for the same reason. As each summand is strictly positive, we have one of two cases. Case (i) is 
$$\#(\pi^{-1}(0) )= \#(\pi^{-1}(\infty))=1.$$
On the source $X=\P^1$, we may assume $\pi^{-1}(0) =0$ and $\pi^{-1}(\infty) = \infty$.

Case (ii) is that, after possibly interchanging $0$ and $\infty$ on the target $\G_m$ by $x \mapsto 1/x$, we
have $$\#(\pi^{-1}(0) )=2, \ \#(\pi^{-1}(\infty))=1.$$
On the source $X=\P^1$, we may assume $\pi^{-1}(0) =\{0,1\}$ and $\pi^{-1}(\infty) = \infty$.
We then have that $\sG$ is lisse of rank one on $\P^1 \setminus \{0,1,\infty\}$, and everywhere tame, so it is $\sL_{\Lambda(x)}\otimes \sL_{\sigma(x-1)}$ for some multiplicative characters $\Lambda$ and $\sigma$,

We first treat case (i). Here $\pi$ is a finite \'{e}tale map from $\G_m/\overline{\F_q}$ to itself of degree $\ge 2$, which sends $0$ to itself and $\infty$ to itself, so necessarily (a nonzero constant
multiple of) the $N^{\mathrm {th}}$ power map, for some $N\ge 2$ prime to $p$. In this case the Euler-Poincar\'{e} formula shows that
$$\Swan_0(\sG)+\Swan_\infty(\sG)=1.$$
The lisse sheaf $ \sG$ is geometrically irreducible (because its direct image $\pi_\star \sG \cong \sH$ is). Therefore \cite[8.5.3.1]{Ka-ESDE} $\sG$ is itself
a hypergeometric sheaf, and $[N]_\star \sG \cong \sH$. In this case of Kummer induction, the rest follows from \cite[8.9.1 and 8.9.2]{Ka-ESDE}.

We now turn to case (ii). The map $\pi: \A^1 \setminus \{0,1\} \rightarrow \G_m$ is given by (a nonzero constant
multiple of) a polynomial $\pi(x)=f(x)=x^A(1-x)^B$ for some integers $A, B \ge 0$. This map being finite \'{e}tale insures that at least one of $A$ or $B$ is prime to $p$ (otherwise $f(x)$ is a $p^{\mathrm {th}}$ power). If either $A$ or $B$ vanishes, then possibly after $x \mapsto 1-x$ we have 
$B=0$, and we are in case (1), with $N=A$. Thus both $A$ and $B$ are strictly positive integers, at least one of which is prime to $p$.

The polynomial $f(x)=x^A(1-x)^B$ defines a finite \'{e}tale map from $\A^1 \setminus \{0,1\}$ to $\G_m$ if and only if the derivative $f'(x)$ has all its zeroes in the set $\{0,1\}$. Let us say that a zero outside  $\{0,1\}$ is a ``bad" zero. We readily calculate
$$f'(x) = \biggl(\frac{A}{x} + \frac{-B}{1-x}\biggr)f(x) =\biggl(\frac{A - (A+B)x}{x(1-x)}\biggr)f(x).$$
If $A+B$ is $0$ mod $p$, there are no bad zeroes. This will be subcase (a).  If $A+B$ nonzero mod $p$, then there is a zero at $x=A/(A+B)$. For this not to be a bad zero, either $A$ must be $0$ mod $p$, or $A$ must be $A+B$ mod $p$, i.e., either $A$ or $B$ must be $0$ mod $p$. These are subcases (b) and (c).

In subcase (a), we readliy compute the tame characters occurring in local monodromies at $0$ and at $\infty$ of $\pi_\star \sG$, with
$\sG =\sL_{\Lambda(x)}\otimes \sL_{\sigma(x-1)}$. In subcases (b) and (c), we do the same, now using $\pi(x):=\frac{1}{x^A(1-x)^B}$.
We know that $\pi_\star \sG$ has Euler Poincar\'{e} characteristic $-1$. 
If there are no tame characters that occur both at $0$ and at $\infty$, this data
determines \cite[8.5.6]{Ka-ESDE}, up to multiplicative translation, the geometrically irreducible hypergeometric sheaf which is the direct image. [If there are tame characters in common, this direct image is geometrically reducible \cite[8.4.7]{Ka-ESDE}. Being semisimple, it is the sum of Kummer sheaves $\sL_\chi$ for each $\chi$ in common, and a geometrically irreducible hypergeometric sheaf of lower rank.]
\end{proof}

\begin{cor}\label{pprim}
If an irreducible hypergeometric sheaf $\sH$ of type $(n,1)$ with $n \ge 2$ is geometrically induced, then its rank is a power of $p$.
\end{cor}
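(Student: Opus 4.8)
The plan is to apply Proposition~\ref{wheninduced} and bookkeep the possibilities, using throughout that here $m=1$.

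First I would dispose of the Kummer-induced case~(i): it requires an integer $N\ge 2$ with $N\mid m$, and $m=1$ makes this impossible. So $\sH$ must be Belyi induced, $\sH\cong\pi_\star\sG$ with $\sG$ of rank one and $\pi$ a finite \'etale map of degree $d=A+B$ for strictly positive integers $A,B$. In particular $\rank(\sH)=\deg(\pi)\cdot\rank(\sG)=A+B$, so $n=A+B$, and our task is reduced to showing $A+B$ is a power of $p$.

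Next I would read off the list of ``downstairs'' characters $\rho_j$ supplied by Proposition~\ref{wheninduced}(ii) in each subcase, remembering that this list has length $m=1$. In subcase~(a) the $\rho_j$ are exactly the $d_0$ distinct $d_0$-th roots of $(\Lambda\sigma)^{1/p^r}$, so $d_0=1$, and then $n=A+B=d_0p^r=p^r$ is a power of $p$, as desired. In subcase~(b) the $\rho_j$ consist of all $A$ of the $A$-th roots of $\Lambda$ together with the $d_0$ many $d_0$-th roots of $\sigma^{1/p^r}$, so their number is $A+d_0\ge 2$ (since $A\ge 1$ and $d_0\ge 1$), contradicting $m=1$; subcase~(c) is ruled out identically, with $A+d_0$ replaced by $B+d_0\ge 2$. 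Hence only subcase~(a) can occur, and $\rank(\sH)=p^r$.

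There is no real obstacle here: the corollary is a direct reading of Proposition~\ref{wheninduced}, the only thing to be careful about being the correct count of local tame characters (equivalently, of the $\rho_j$) in subcases (b) and (c), which is precisely the data the proposition records. One could alternatively re-derive the needed count from the Euler--Poincar\'e formula by computing $\Swan_\infty$ of the relevant totally ramified pushforwards, but invoking the proposition is cleaner.
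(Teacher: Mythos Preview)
Your proof is correct and follows the same approach as the paper's: rule out the Kummer case because $N\mid m=1$, then in the Belyi cases count the number of downstairs characters $\rho_j$ supplied by Proposition~\ref{wheninduced}, finding $A+d_0\ge 2$ (resp.\ $B+d_0\ge 2$) in subcases (b) and (c), and $d_0=1$ forcing $n=A+B=p^r$ in subcase (a). The only extra detail you add is the explicit identification $n=A+B$ via $\rank(\pi_\star\sG)=\deg(\pi)$, which the paper leaves implicit.
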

\begin{proof}It cannot be Kummer induced of degree $N \ge 2$, because $N$ must divide $\gcd(n,1)$. In case (2), subcases (b) and (c), there are at least two tame characters at $\infty$. In subcase (a), there is just one tame character at $\infty$ precisely when $A+B=n$ is a power of $p$ (i.e., when $d_0=1$ in that subcase).
\end{proof}
  
  \begin{cor}\label{ppprim}
An irreducible hypergeometric sheaf $\sH$ of type $(n,m)$ with $n > m >1$ and $n$ a power of $p$ is not geometrically induced.
\end{cor}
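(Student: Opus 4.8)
The plan is to apply Proposition~\ref{wheninduced} verbatim: if $\sH$ were geometrically induced, then up to isomorphism we would be in case~(i) or case~(ii) of that proposition, and I will rule out each possibility using only the arithmetic constraints that $n$ is a power of $p$ and $n>m>1$.

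First I would treat the Kummer case~(i). There the covering degree $N\ge 2$ divides both $n$ and $m$, hence divides $\gcd(n,m)$; but $N$ is prime to $p$, whereas $n$, and a fortiori $\gcd(n,m)$, is a power of $p$. This forces $N=1$, contradicting $N\ge 2$.

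Next I would go through the three Belyi subcases of case~(ii). In subcase~(a) the $\chi_i$ are the $A^{\mathrm{th}}$ roots of $\Lambda$ together with the $B^{\mathrm{th}}$ roots of $\sigma$, so $n=A+B=d_0p^r$, while the $\rho_j$ are the $d_0^{\mathrm{th}}$ roots of $(\Lambda\sigma)^{1/p^r}$, so $m=d_0$. Since $p\nmid d_0$ and $n$ is a power of $p$, we must have $d_0=1$, whence $m=1$, contradicting $m>1$. In subcases~(b) and~(c) one has $n=A+B$ with exactly one of $A,B$ equal to $d_0p^r$ for some $r\ge 1$ (hence divisible by $p$) and the other prime to $p$; therefore $n$ is prime to $p$. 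But $n>m>1$ gives $n\ge 3$, so $n$ cannot simultaneously be a power of $p$ --- a contradiction.

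Having exhausted cases~(i) and~(ii), I conclude that $\sH$ is not geometrically induced. I do not expect any genuine obstacle here: all the real work was already done in Proposition~\ref{wheninduced}, and what remains is the bookkeeping of the values of $n$ and $m$ in each subcase. One may also view this statement as complementary to Corollary~\ref{pprim}: when $\sH$ is geometrically induced, Proposition~\ref{wheninduced} forces subcase~(a) with $d_0=1$ once $n$ is a power of $p$, and that subcase has $m=1$, which is exactly what the hypothesis $m>1$ excludes.
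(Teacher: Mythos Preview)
Your proof is correct and follows essentially the same route as the paper: rule out Kummer induction because the prime-to-$p$ degree would have to divide the $p$-power $n$, then rule out Belyi subcases (b) and (c) because there $n=A+B$ is prime to $p$, and finally note that subcase (a) forces $d_0=1$ and hence $m=1$. The only cosmetic difference is that in the Kummer case you pass through $\gcd(n,m)$, whereas the paper (and you could too) uses directly that $N\mid n$ with $N$ prime to $p$.
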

\begin{proof}It cannot be Kummer induced of degree $d \ge 2$, because $d$ is prime to $p$ but divides the rank $n$ of $\sH$. In case (ii), we must be in subcase (a), otherwise the rank is prime to $p$. In subcase (a), there is just one tame character at $\infty$, because $d_0=1$ in that subcase.
\end{proof}

 \begin{rmk}As Sawin has pointed out to us, for $\pi(x) =\frac{1}{x^{Nq}(1-x)}$, $\pi_\star \triv$ is the direct sum of $\triv$ with
 $$\sH yp(\psi, \mbox{all }\chi \mbox{ nontrivial with } \chi^{Nq+1}=\triv;  \mbox{all }\rho \mbox{ with } \rho^N=\triv).$$
 This last sheaf is thus "almost" induced from rank one, and hence has finite geometric monodromy.
 In particular, for $N=1$, 
 $$\sH yp(\psi, \mbox{ all }\chi \mbox{ nontrivial with } \chi^{q+1}=\triv;  \triv)$$
is ``almost" induced and has finite geometric monodromy.
\end{rmk}  
            
\section{Tensor indecomposability}
      
Over a field $k$, a representation $\Phi: G \to \GL(V)$ of a group $G$ is called {\it tensor decomposable} if there exists a $k$-linear isomorphism
$V \cong A\otimes_k B$ with both $A$, $B$ of dimension $\ge 2$, such that $\Phi(G) \leq \GL(A)\otimes_k \GL(B)$, the latter
being the image of $\GL(A)\times \GL(B)$ in $\GL(A\otimes_k B)$  by the map $(\phi,\rho)\mapsto  \phi \otimes \rho$.
  
In this situation, it is well known (also see the proof of Theorem \ref{indecompose} for more detail) that both $A$ and $B$ can be given the structure of projective representations of $G$, in such a way that the $k$-linear isomorphism $V \cong A\otimes_k B$ becomes an isomorphism of projective representations.
 
In reading the literature, it is important to distinguish this notion from the stronger notion of {\it linearly tensor decomposable}  that both $A$ and $B$ are 
$kG$-modules such that  the $k$-linear isomorphism $V \cong A\otimes_k B$ becomes an isomorphism of $kG$-modules.
  
 We use the term {\it tensor indecomposable} to mean ``not tensor decomposable" (and so ``tensor indecomposable'' is stronger than
 ``linearly tensor indecomposable'', cf. Remark \ref{2ex}).
  
The target of this section is the following theorem.

\begin{thm}\label{tensor} 
 In characteristic $p > 0$ and with $\ell \neq p$, a $\overline{\Q_\ell}$-hypergeometric sheaf
 $$\sH=\sH yp(\psi,\chi_1,\ldots ,\chi_n;\rho_1,\ldots ,\rho_m).$$
 of the type in Lemma \ref{I finite}, i.e. one of type $n > m >0$ with the ``downstairs" characters $\rho_i$ pairwise distinct, is tensor indecomposable as a representation of $\pi_1(\G_m/\overline{\F_p})$ if either of the following conditions holds:
 \begin{enumerate}[\rm(i)]
\item $n$ is neither $4$ nor an even power of $p$.
\item $n$ is an even power of $p$ and $m > 1$.
\end{enumerate} 
\end{thm}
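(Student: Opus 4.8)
The plan is to restrict the alleged factorization to the inertia group $I(\infty)$ and squeeze a numerical contradiction out of the shape of the wild part there. So suppose, for contradiction, that $\sH \cong A \otimes_{\Ql} B$ as projective representations of $\pi_1(\G_m/\overline{\F_p})$, with $a := \dim A \ge 2$, $b := \dim B \ge 2$, so that $ab = n$. By Lemma \ref{I finite} the group $I(\infty)$ acts on $\sH$ through a finite quotient $\bar I$; write $\bar P \lhd \bar I$ for the image of wild inertia $P := P(\infty)$, a $p$-group, with $\bar I/\bar P$ cyclic of order $d$ prime to $p$. A projective representation of the finite group $\bar I$ lifts to an honest linear representation of a finite central extension $\tilde I$ of $\bar I$, and doing this compatibly for both tensor factors (the descent bookkeeping here is the one carried out in the proof of Theorem \ref{indecompose}) yields honest linear representations $A'$, $B'$ of $\tilde I$ with $A' \otimes B'$ restricting to $\sH$ on $I(\infty)$. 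Throughout I keep in hand the explicit description $\sH|_{I(\infty)} = \Wild_{n-m} \oplus \bigoplus_{j=1}^m \sL_{\rho_j}$, in which $\Wild_{n-m}$ is the unique wild (non-tame) $I(\infty)$-irreducible summand, of dimension $n-m$, with all $\infty$-breaks equal to $1/(n-m)$ and Swan conductor $1$.

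Next I would show that neither factor can be tame at $\infty$ and that the wild parts of $A$ and $B$ are forced to match. If one of the factors, say $B$, is tame at $\infty$ as an $I(\infty)$-representation — which one may always arrange when $P$ acts projectively trivially on it, by a suitable character twist of $B$ (and the inverse twist of $A$) — then $\sH|_P \cong (A|_P)^{\oplus b}$, so $1 = \Swan_\infty(\sH) = b\cdot\Swan_\infty(A)$ with $\Swan_\infty(A)$ a positive integer, which is impossible for $b \ge 2$. Hence $P$ acts nontrivially through each of $A$ and $B$. Comparing $\infty$-breaks and using that the top break of a tensor product of $I(\infty)$-representations is at most the larger of the two top breaks, with equality whenever they differ, the top $\infty$-breaks of $A$ and of $B$ must coincide; and an inspection of the precise break structure of $\Wild_{n-m}$ shows that a common top break strictly larger than $1/(n-m)$ cannot cancel away completely in the product, so the common top break is exactly $1/(n-m)$.

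The combinatorial core is then the analysis on $P = P(\infty)$ itself. When $p \nmid n-m$ (the main case), $\Wild_{n-m}$ is monomial — induced from a character of the unique index-$(n-m)$ subgroup of $I(\infty)$ containing $P$ — so $\Wild_{n-m}|_P = \mu_1 \oplus \cdots \oplus \mu_{n-m}$ is a multiplicity-free sum of characters of $P$ of break $1/(n-m)$, permuted transitively by $\bar I/\bar P$. Thus $A' \otimes B'$, restricted to the preimage $\tilde P$ of $\bar P$ in $\tilde I$, is the sum $\mu_1 \oplus \cdots \oplus \mu_{n-m} \oplus \triv^{\oplus m}$ of $n$ characters of the $p$-group $\tilde P$, which forces $A'|_{\tilde P}$ and $B'|_{\tilde P}$ to be sums of characters $\{\lambda^A_i\}_{i=1}^a$ and $\{\lambda^B_j\}_{j=1}^b$ with $\{\lambda^A_i\lambda^B_j\}_{i,j} = \{\mu_1,\dots,\mu_{n-m}\} \sqcup \{\triv^{(m)}\}$ as multisets. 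The preimage $\tilde T$ in $\tilde I$ of the cyclic tame quotient $\bar I/\bar P$ is abelian, and it permutes $\{\lambda^A_i\}$ and $\{\lambda^B_j\}$ compatibly with its transitive action on the $n-m$ characters $\mu_k$ and its permutation of the $m$ ``trivial-product'' pairs. Running the resulting orbit bookkeeping — an orbit inside $\{\lambda^A_i\}$ has at most $a$ elements, one inside $\{\lambda^B_j\}$ at most $b$, yet the $n-m$ pairs $(i,j)$ with $\lambda^A_i\lambda^B_j \neq \triv$ form a single $\tilde T$-orbit — one is pushed to conclude that $a$ and $b$ are powers of $p$, so that $n = ab$ is an even power of $p$; the sole degeneration, where the sets are too small for the argument to bite, is $a = b = 2$, i.e. $n = 4$. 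This already establishes the theorem under hypothesis (i). When $p \mid n-m$ one argues the same way after replacing $P$ by the subgroup over which $\Wild_{n-m}$ becomes monomial, again forcing $n$ to be an even power of $p$ in the presence of a decomposition.

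It then remains to treat hypothesis (ii): $n$ an even power of $p$ and $m > 1$. Here the constraint coming from the wild part alone is consistent — it must be, since for $m = 1$ there genuinely are tensor-decomposable examples (the ``almost induced from rank one'' sheaves of the Remark following Corollary \ref{ppprim}) — so the additional input is the tame part at $\infty$: with $m \ge 2$ there are $m \ge 2$ trivial-product pairs permuted by $\tilde T$, and the $m$ distinct downstairs characters $\rho_j$ force a generator of the tame quotient to act on those pairs with $m$ distinct eigenvalues. Feeding this back into the orbit-and-eigenvalue bookkeeping of the previous paragraph (and invoking, where convenient, Corollary \ref{ppprim}, which already excludes $\sH$ being geometrically induced when $n$ is a power of $p$ and $m > 1$) over-determines the putative decomposition and produces the contradiction. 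I expect this last step, together with the extraction in the previous paragraph of the ``$a$ and $b$ are powers of $p$'' conclusion from the $\tilde P$- and $\tilde T$-actions — that is, genuinely understanding tensor decompositions of a totally wild, Swan-conductor-one local representation and their interaction with the tame monodromy — to be the main obstacle, and the delicate point to be correctly isolating $n = 4$ and ($n$ an even power of $p$ with $m = 1$) as the true exceptions rather than as artifacts of the method.
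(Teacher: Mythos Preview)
Your overall strategy --- restrict to $I(\infty)$, use Lemma \ref{I finite} to pass to a finite quotient, and exploit the shape $\Wild_{n-m}\oplus(\mbox{tame of dimension }m)$ --- is exactly the paper's, and is correct. But the execution has real gaps, and you yourself flag the main one.

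First, a technical point: the Swan conductor argument (``$1 = b\cdot\Swan_\infty(A)$'') requires $A$ and $B$ to be honest linear representations of $I(\infty)$, not of an abstract finite cover $\tilde I$. You pass to $\tilde I$ to linearize, but never return to $I(\infty)$; Swan conductors make no sense on $\tilde I$. The paper (in its Appendix) closes this by invoking $H^2(I,\mu_N)=0$ (cohomological dimension $\le 1$), which lets one linearize $A$ and $B$ as $I(\infty)$-representations directly.

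Second, and more seriously, the ``orbit bookkeeping'' that you say will force $a$ and $b$ to be powers of $p$ is not carried out, and even the stated conclusion is too weak: you need $a=b$, not merely that both are $p$-powers, to get $n$ an \emph{even} power of $p$. The clean way through --- which is what both of the paper's proofs do, in different language --- is this. Once you have reduced to $V_1=W_1$, $V_2=W_2$ both totally wild and $I$-irreducible, the fact that $W_1\otimes W_2$ contains a tame character $\chi$ forces $W_2\cong W_1^\vee\otimes\chi$, whence $V\otimes\overline\chi\cong\End(W_1)$. Now everything is transparent: if $\dim W_1$ is prime to $p$, the wild part of $\End(W_1)$ breaks into $\dim(W_1)-1$ nonzero $I$-stable pieces (one for each nontrivial $\mu_{\dim W_1}$-translate), so irreducibility of the wild part of $V$ gives $\dim W_1=2$ and $n=4$; if $\dim W_1 = q$ is a $p$-power, then $W_1$ is $P$-irreducible, so the tame part of $\End(W_1)$ is exactly the $1$-dimensional line of scalars, giving $m=1$. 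This is the missing key step, and it replaces your undetermined ``orbit bookkeeping'' and your separate treatment of case (ii).

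Finally, Corollary \ref{ppprim} concerns being \emph{induced}, not tensor-decomposed; invoking it for case (ii) is a red herring.
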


\begin{rmk}\label{howto}
In order to show that a representation of a group $G$ is tensor indecomposable, it suffices to exhibit a subgroup $H$ of $G$ such that
the restriction to $H$ of the representation is tensor indecomposable as a representation of $H$. We will do this by taking the subgroup $I(\infty)$. In view of Lemma \ref{I finite}, $I(\infty)$ acts through a finite quotient group. In Theorem \ref{indecompose} below, we argue directly with this finite quotient group. In the Appendix, we give another approach to this same result. There we again use Lemma \ref{I finite}, this time combined with the fact that $I(\infty)$ has cohomological dimension $\le 1$ (in the suitable profinite world) to show first that if the representation
is tensor decomposable then in fact it is linearly tensor decomposable, and then we show that this is impossible under either
of the stated hypotheses.
\end{rmk}

We begin with the fraction field $K$ of a henselian discrete valuation ring $R$ whose residue field $k$ is  algebraically closed of characteristic $p > 0$, and consider a separable closure $K^{sep}$ of $K$. Then we have $I:=\Gal( K^{sep}/K)$ the inertia group, and $P \lhd I$ the $p$-Sylow subgroup of $I$. We fix a prime $\ell \neq p$, an algebraic closure $\overline{\Q}_{\ell}$ of $\Q_\ell$, and work in the category of continuous, finite dimensional $\overline{\Q}_{\ell}$-representations of $I$ (which we will call simply ``representations of $I$''). Note that any finite quotient group $J$ of $I$ is a finite group, with normal Sylow $p$-subgroup (which we will also denote by $P$) and with cyclic quotient $J/P$.

\begin{thm}\label{indecompose}
Let $J$ be a finite group, with normal Sylow $p$-subgroup $P$ and with cyclic quotient $J/P$. Let $V$ be a finite-dimensional 
$\C J$-module which is the direct sum $T \oplus W$ of a nonzero tame part $T$ (i.e., one on which $P$ acts trivially) and of an
irreducible submodule $W$ which is totally wild (i.e., one in which $P$ has no nonzero invariants). Suppose that one of the following conditions
holds.
\begin{enumerate}[\rm(a)]
\item $\dim(V)$ is neither $4$ nor an even power of $p$.
\item $\dim(V)$ is an even power of $p$ and $\dim(T) > 1$.
\end{enumerate} 
Then $J$ does not stabilize any decomposition $V = A \otimes B$ with $\dim(A), \dim(B) > 1$.
\end{thm}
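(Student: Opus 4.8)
The plan is to argue by contradiction. Assume $J$ stabilizes a decomposition $V=A\otimes B$ with $\dim A,\dim B\ge 2$. First I would replace the projective data by linear data: the inclusion $J\le\GL(A)\otimes\GL(B)$ determines homomorphisms $J\to\PGL(A)$, $J\to\PGL(B)$, and pulling back along $\GL(A)\times\GL(B)\to\GL(A\otimes B)$ and passing to a suitable finite central extension produces a finite group $\tilde J$, a central cyclic subgroup $Z\le\tilde J$ with $\tilde J/Z\cong J$, genuine $\C\tilde J$-modules $A$ and $B$ on which $Z$ acts through a character $\lambda$ (resp. $\lambda^{-1}$), and a $\C\tilde J$-isomorphism $V\cong A\otimes B$ (so $Z$ acts trivially on $V$). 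Let $\tilde P\le\tilde J$ be the preimage of $P$; then $\tilde P/Z\cong P$ and $\tilde J/\tilde P\cong J/P$ is cyclic, generated by some $g$, while $\tilde P$ acts on $V$ through $P$. Hence $T|_{\tilde P}=\triv^{\oplus t}$ with $t:=\dim T\ge1$, and $\tilde P$ has no nonzero invariants on $W$. Applying Clifford theory to the $\tilde J$-irreducible $W$ and the normal subgroup $\tilde P$, and using that $\tilde J/\tilde P$ is cyclic (so the relevant inertia quotient is cyclic and the Clifford multiplicity is $1$), we get $W|_{\tilde P}=\sigma_1\oplus\dots\oplus\sigma_r$ with the $\sigma_i$ pairwise distinct, nontrivial, transitively permuted by $g$, and all of one common dimension $d$ (a power of $p$). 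Thus $V|_{\tilde P}=\triv^{\oplus t}\oplus\bigoplus_{i=1}^r\sigma_i$, and here the trivial character is the only constituent of multiplicity $>1$.

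I would then record two facts. (1) $A|_{\tilde P}$ and $B|_{\tilde P}$ are multiplicity-free: if some irreducible $\alpha$ appeared twice in $A|_{\tilde P}$, then $\alpha\otimes\beta$ would appear with multiplicity $\ge 2$ in $V|_{\tilde P}$ for every constituent $\beta$ of $B|_{\tilde P}$, forcing all constituents of every such $\alpha\otimes\beta$ to be trivial, whence $\dim\alpha=\dim\beta=1$, $\beta\cong\alpha^{\vee}$, and (symmetrically) $V|_{\tilde P}$ trivial, contradicting $W\ne 0$. Write $A|_{\tilde P}=\bigoplus_{\alpha\in S_A}\alpha$ and $B|_{\tilde P}=\bigoplus_{\beta\in S_B}\beta$ with $S_A,S_B$ sets of distinct irreducibles; note $S_A,S_B$ are $\langle g\rangle$-stable. (2) The set $\Omega\subseteq S_A\times S_B$ of pairs $(\alpha,\beta)$ with $\alpha\otimes\beta$ containing some $\sigma_i$ is a single $\langle g\rangle$-orbit: each $\sigma_i$ has multiplicity $1$ in $V|_{\tilde P}$, hence lies in a unique summand $\alpha\otimes\beta$, and the $g$-orbit of the pair containing $\sigma_1$ exhausts all the $\sigma_i$. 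Consequently every pair outside $\Omega$ has the form $(\alpha,\alpha^{\vee})$ with $\dim\alpha=1$, and $t=\#\{\alpha\in S_A:\alpha^{\vee}\in S_B\}$.

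Next I would rule out $d>1$. If $\alpha\in S_A$ satisfies $\alpha^{\vee}\in S_B$ and $\dim\alpha\ge2$, then $(\alpha,\alpha^{\vee})\in\Omega$, so $\alpha\otimes\alpha^{\vee}=\triv\oplus(\text{nonzero sum of }\sigma_i)$ and $d\mid\dim\alpha^{2}-1$; since $d$ is a power of $p$ and $\dim\alpha^{2}-1$ is prime to $p$, this is impossible, so (as $t\ge1$) both $S_A$ and $S_B$ contain a one-dimensional character. Tensoring a one-dimensional $\alpha_\ast\in S_A$ into $V|_{\tilde P}$ shows every member of $S_B$ other than $\alpha_\ast^{\vee}$ has dimension $d$; hence $S_A$ has a unique one-dimensional character $\alpha_\ast$, $S_B$ has a unique one, which must be $\alpha_\ast^{\vee}$, and $g$ fixes both. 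But then the $\sigma_i$ coming from the pairs $(\alpha_\ast,\beta)$ with $\dim\beta=d$, from the pairs $(\alpha,\alpha_\ast^{\vee})$ with $\dim\alpha=d$, and from the pairs $(\alpha,\beta)$ with $\dim\alpha=\dim\beta=d$ form three $\langle g\rangle$-stable families, all nonempty because $\dim A,\dim B\ge 2$, contradicting the transitivity of $g$ on $\{\sigma_1,\dots,\sigma_r\}$. Hence $d=1$.

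Finally, when $d=1$ all $\sigma_i$ are characters, so $V|_{\tilde P}$ is a sum of characters, $[\tilde P,\tilde P]$ acts trivially on $V$, and therefore acts by a character on $A$ and by its inverse on $B$. Because irreducible projective representations of the abelian group $\tilde P/[\tilde P,\tilde P]$ attached to a fixed cocycle all have one and the same dimension, every constituent of $A|_{\tilde P}$, and of $B|_{\tilde P}$, has a single dimension $p^{j}$, the same $j$ for both. If $j\ge1$, there are no pairs outside $\Omega$, so $S_A\times S_B$ is a single $\langle g\rangle$-orbit; comparing the $\langle g\rangle$-stabilizers of $\alpha$, of $\beta$, and of $(\alpha,\beta)$ forces $\gcd(|S_A|,|S_B|)=1$, and $t\ge1$ makes the equal-or-disjoint $g$-orbits $S_A$ and $S_B^{\vee}$ coincide, so $|S_A|=|S_B|=1$; thus $A,B$ are irreducible of dimension $p^{j}$, $\dim V=p^{2j}$, and $t=1$. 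If $j=0$, then $A|_{\tilde P},B|_{\tilde P}$ are sums of characters, and the question becomes one about the multiset $S_A+S_B$ in the character group $X=\Hom(\tilde P,\C^{\times})$: $S_A,S_B$ are $\langle g\rangle$-stable, $0$ occurs with multiplicity $t$, and the nonzero part is a multiplicity-free single $\langle g\rangle$-orbit $\mathcal O$. Decomposing $S_A,S_B$ into $\langle g\rangle$-orbits and intersecting each $\mathcal A_i+\mathcal B_j$ with $\{0\}\cup\mathcal O$ shows that either some orbit has size $1$ — which, chased through, forces $\dim A$ or $\dim B$ to be $1$, or contradicts multiplicity-freeness or $t\ge1$ — or $S_A,S_B$ are single orbits with $S_A=-S_B$ having pairwise distinct nonzero differences; in the latter case the element $g^{|S_A|}$ fixes a point of $S_A$ and hence also a point of $\mathcal O$, an orbit of size $|S_A|(|S_A|-1)$, which forces $|S_A|=2$ and $\dim V=4$. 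In every case that survives, $\dim V$ is $4$ or an even power of $p$, with $t=1$ in the latter; this contradicts hypothesis (a), and it contradicts hypothesis (b) as well (the remaining possibility there, $\dim V=4$, $p=2$, $t>1$, being impossible since it would force a nontrivial character to occur twice in $V|_{\tilde P}$). I expect the $j=0$ sumset analysis to be the main obstacle: it is delicate and is precisely where the exceptional value $4$ appears.
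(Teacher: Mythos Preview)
Your approach is essentially the same as the paper's: lift to a central extension, restrict to the preimage of $P$, use Clifford theory to see that the nontrivial part of $V|_{\tilde P}$ is a single orbit of irreducibles with multiplicity one, and then analyze how this constrains the constituents of $A|_{\tilde P}$ and $B|_{\tilde P}$. Your organization differs in useful ways --- establishing multiplicity-freeness of $A|_{\tilde P}$ and $B|_{\tilde P}$ at the outset, and packaging the key constraint as ``$\Omega$ is a single $\langle g\rangle$-orbit'' --- and your separate elimination of $d>1$ via the three $g$-stable families is clean. The sumset analysis in the $j=0$ case, though sketched, is correct when filled in: for any pair $(\mathcal A_i,\mathcal B_j)\ne(\mathcal A_{i_0},\mathcal B_{j_0})$ the multiset $\mathcal A_i+\mathcal B_j$ is supported on $\{0\}$, which forces $|\mathcal A_i|=|\mathcal B_j|=1$ and quickly yields $\dim A=1$ or $\dim B=1$ unless $S_A,S_B$ are single orbits; your divisibility argument $|\mathcal O|=k(k-1)\mid k$ then gives $k\le 2$.

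There is one genuine slip, in the final parenthetical. You assert that $\dim V=4$, $p=2$, $t>1$ ``would force a nontrivial character to occur twice in $V|_{\tilde P}$'', but this is not so: the two nontrivial characters in that case are $\sigma=a-ga$ and $-\sigma$, which are distinct as soon as $\sigma$ has order $\ge 4$ in the character group of $P$ --- perfectly possible for a $2$-group $P$. The correct obstruction, and the one the paper invokes, is that $|S_A|=k=2$ is the size of a $\langle g\rangle$-orbit and hence divides $|\tilde J/\tilde P|=|J/P|$; since $P$ is the Sylow $p$-subgroup of $J$, $|J/P|$ is prime to $p$, so $p\ne 2$. With this correction your argument goes through.
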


\begin{proof}
(i) Suppose that $J$ fixes a decomposition $V = A \otimes B$ with $\dim(A), \dim(B) > 1$. Fix a basis $(e_1, \ldots,e_k)$ of 
$A$ and a basis $(f_1, \ldots,f_l)$ of $B$, so that $(e_i \otimes f_j \mid 1 \leq i \leq k,1 \leq j \leq l)$ is a basis of $V$.
Let $\Phi: J \to \GL_{\dim(V)}(\C)$ denote the matrix representation of $J$ on $V$ with respect to this basis. Then for each $g \in J$, we can
find matrices $\Theta(g) \in \GL_{\dim(A)}(\C)$ and $\Psi(g) \in \GL_{\dim(B)}(\C)$ such that 
\begin{equation}\label{eq-ind1}
  \Phi(g) = \Theta(g) \otimes \Psi(g).
\end{equation}
Note that if $X$ and $Y$ are invertible matrices (of possibly different sizes) over any field $\F$ so that $X \otimes Y$ is the identity matrix, then
$X$ and $Y$ are scalar matrices (of the corresponding sizes), inverses to each other. It follows that if $X \otimes Y = X' \otimes Y'$ for some invertible matrices
$X,X'$ of the same size and invertible matrices $Y,Y'$ of the same size, then $X' = \gamma X$ and $Y' = \gamma^{-1}Y$ for some 
$\gamma \in \F^{\times}$.  

Now, for any $g, h \in J$, by \eqref{eq-ind1} we have
$$\Theta(g)\Theta(h) \otimes \Psi(g)\Psi(h) = (\Theta(g) \otimes \Psi(g))(\Theta(h)\otimes \Psi(h)) = \Phi(g)\Phi(h) = \Phi(gh) = \Theta(gh) \otimes \Psi(gh).$$
By the above observation, $\Theta(gh)=\gamma(g,h)\Theta(g)\Theta(h)$ for some $\gamma(g,h) \in \C^{\times}$, i.e. the map 
$\Theta:g \mapsto \Theta(g)$ gives a projective representation of $J$, with factor set $\gamma$. We also have that 
$\Psi(gh) = \gamma(g,h)^{-1}\Psi(g)\Psi(h)$, and so $\Psi:g \mapsto \Psi(g)$ is a projective representation of $J$, with factor set $\gamma^{-1}$.
Hence, for a fixed universal cover $\hat J$ of $J$, we can lift  $\Theta$ and $\Psi$ to linear representations of $\hat J$. Thus we can 
view $A$ as a $\hat J$-module with character $\alpha$, and $B$ as a $\hat J$-module with character $\beta$. We can also inflate $V$ to a
$\hat J$-module, with character $\varphi$.

\smallskip
(ii) Recall that $J \cong \hat J/Z$ for some $Z \leq \bfZ(\hat J)$. Let $\hat P$ be the full inverse image of $P$ in $\hat J$, so that 
$\hat P/Z \cong P$, and let $Q := \bfO_p(\hat P)$. Note that $Q \lhd \hat J$ and $\hat P = Q \times \bfO_{p'}(Z)$. In particular, $\hat P$ 
acts trivially on $\Irr(Q)$. 

Recall the assumption that the $J$-module $W$ is irreducible. Let $\lambda_1$ be an irreducible constituent of the $P$-character
afforded by $W$, and let 
$J_1$ be the stabilizer of $\lambda_1$ in $J$. Since $J_1/P$ is cyclic, $\lambda_1$ extends to $J_1$ and any irreducible character 
of $J_1$ lying above $\lambda_1$ restricts to $\lambda_1$ over $P$, see e.g. \cite[(11.22), (6.17)]{Is}. It follows by Clifford theory
that the $P$-module $W$ affords the character $\lambda_1 + \ldots + \lambda_s$, where $\{\lambda_1, \ldots ,\lambda_s\}$ is a
$J/P$-orbit on $\Irr(P)$. We will now inflate these characters to $\hat P$-characters, also denoted $\lambda_1, \ldots,\lambda_s$, with
$Z$ and $\bfO_{p'}(Z)$ in their kernels, and then have that  
\begin{equation}\label{eq-ind2}
  \varphi|_Q = c \cdot 1_Q + \sum^s_{i=1}\lambda_i, \mbox{ with }\{\lambda_1, \ldots ,\lambda_s\} \mbox{ a }\hat J\mbox{-orbit on }\Irr(Q) \mbox{ and }
  c \in \Z_{\geq 1}. 
\end{equation}   
Now write 
\begin{equation}\label{eq-ind3}
  \alpha|_Q = a \cdot 1_Q + \sum^{m}_{i=1}\alpha_i,~\beta|_Q = b \cdot 1_Q + \sum^{m}_{i=1}\beta_j, 
\end{equation}
where $a,b,m,n \in \Z_{\geq 0}$, and $\alpha_i,\beta_j \in \Irr(Q) \smallsetminus \{1_Q\}$ not necessarily distinct. Since $\alpha$ is a 
$\hat J$-character and $Q \lhd \hat J$, $\{\alpha_1, \ldots,\alpha_m\}$ (if non-empty) is $\hat J$-stable, and similarly $\{\alpha_1, \ldots,\alpha_m\}$ is 
$\hat J$-stable if non-empty. In what follows we will refer to these two facts as $\hat J$-stability.

\smallskip
(iii) We will use the equality $\varphi|_Q = (\alpha|_Q)(\beta|_Q)$ to derive a contradiction. First we consider the case $a, m > 0$.

Suppose in addition that $b,n > 0$. Then $\varphi|_Q$ involves $b\sum^m_{i=1}\alpha_i + a\sum^n_{j=1}\beta_j$, and so, by $\hat J$-stability, it contains 
at least two $Q$-characters, each being a sum over some $\hat J$-orbit on $\Irr(Q) \smallsetminus \{1_Q\}$. This contradicts \eqref{eq-ind2}.

Now assume that $b > 0$ but $n = 0$. Then $\varphi|_Q = ab \cdot 1_P + b\sum^m_{i=1}\alpha_i$. Comparing the multiplicity of 
$\alpha_1$ in $\varphi|_Q$ and using \eqref{eq-ind2}, we see that $b=1$, and so $\dim(B) = \beta(1) = b = 1$, a contradiction.

Next we assume that $b=0$, so that $n > 0$. Then 
$$\varphi|_Q = a\sum^n_{j=1}\beta_j + \sum_{i,j}\alpha_i\beta_j.$$
Comparing the multiplicity of $\beta_1$ in $\varphi|_Q$ and using \eqref{eq-ind2}, we see that $a=1$ and moreover all 
$\beta_1, \ldots,\beta_n$ are pairwise distinct, whence $\{\beta_1, \ldots,\beta_n\}$ is a $\hat J$-orbit by $\hat J$-stability. This in turn implies 
by \eqref{eq-ind2} that $\{\beta_1, \ldots,\beta_n\} = \{\lambda_1, \ldots,\lambda_s\}$ and so 
\begin{equation}\label{eq-ind4}
  \alpha_i\beta_j = d_{ij}1_Q \mbox{ for some }d_{ij} \in \Z_{\geq 1} \mbox{ and for all }i,j.
\end{equation}  
Since $\alpha_i,\beta_j \in \Irr(Q)$, we observe that the multiplicity of $1_Q$ in $\alpha_i\beta_j$ is $0$ if $\beta_j \neq \overline{\alpha}_i$ and 
$1$ otherwise. Hence \eqref{eq-ind4} can happen only when  $\beta_j = \overline{\alpha}_i$ for all $i,j$ and moreover $\alpha_i(1)=1=\beta_j(1)$. If 
$n \geq 2$, we would then have $\beta_1 = \overline{\alpha}_1 = \beta_2$, a contradiction. So $n=1$ and $\dim(B) = n\beta_1(1) = 1$, again a 
contradiction.

\smallskip
(iv) In view of (iii), we have shown that $am=0$ and so $bn=0$ by symmetry. 

Assume in addition that $m=0$, so that $a > 0$. If $n=0$, then \eqref{eq-ind3} 
implies $\varphi|_Q = ab \cdot 1_Q$, contradicting \eqref{eq-ind2}. If $n > 0$, then $b=0$, and 
$\varphi|_Q = a\sum^n_{j=1}\beta_j$, again contradicting \eqref{eq-ind2}.

Thus we must have $a=0$, and so $b=0$ by symmetry. Now, according to $\hat J$-stability, $\alpha|_Q$ is, say $e$ times the sum 
over the $\hat J$-orbit $\{\alpha_1, \ldots ,\alpha_k\}$ of $\alpha_1 \in \Irr(Q)$.  As mentioned above, $\hat P = Q \times \bfO_{p'}(Z)$ acts 
trivially on $\Irr(Q)$, so only the cyclic group $\langle x \rangle \hat J/\hat P \cong J/P$ acts on $\Irr(Q)$. Note that, in any transitive action of 
any finite abelian group, all the point stabilizers are the same. Thus, if $\hat J_1$ is the unique subgroup of $\hat J$ of index $k$ that contains $\hat P$,
then $\hat J_1$ is the stabilizer of $\alpha_1$; moreover, we can write $\alpha_i = \alpha_1^{x^{i-1}}$ for $1 \leq i \leq k$, so that
\begin{equation}\label{eq-ind5}
  \alpha|_Q = e\sum^k_{i=1}\alpha_1^{x^{i-1}}.
\end{equation}  
The same argument applies to $\beta_Q$. Furthermore, since $1_Q$ contains in $(\alpha|_Q)(\beta|_Q)$, we may assume that 
$\beta_1 = \overline{\alpha}_1$. As $\alpha_1$ and $\overline{\alpha}_1$ have the same stabilizer in $\hat J$, we see that 
the $\hat J$-orbit of $\beta_1$ is exactly 
$$\{\overline{\alpha}_i = \overline{\alpha}_1^{x^{i-1}} \mid 1 \leq i \leq k\},$$
whence  
\begin{equation}\label{eq-ind6}
  \beta|_Q = f\sum^k_{i=1}\overline{\alpha}_1^{x^{i-1}}.
\end{equation}  
for some $f \in \Z_{\geq 1}$.

\smallskip
(v) Consider the case $k \geq 2$. Then $\varphi|_Q$ contains $ef\alpha_1\overline{\alpha}_1^x$ with $\alpha_1 \neq \alpha_1^x$. 
The latter implies that no irreducible constituent of $ef\alpha_1\overline{\alpha}_1^x$ can be $1_Q$, and so $ef=1$ by \eqref{eq-ind2}. 
Now, $\varphi|_Q$ contains the $\hat J$-stable character 
$$\Sigma:= \sum^{k-1}_{i=1}\alpha_1^{x^{i-1}}\overline{\alpha}_1^{x^i} + \alpha_1^{x^{k-1}}\overline{\alpha}_1,$$
and $[\varphi|_Q,1_Q]_Q = k$ by \eqref{eq-ind5} and \eqref{eq-ind6}. So \eqref{eq-ind2} implies that $\sum^s_{i=1}\lambda_i$ is 
contained in $\Sigma$ and $\varphi|_Q$ is contained in $k\cdot 1_Q+\Sigma$. Denoting $d:=\alpha_1(1)$ and comparing degrees, we then get
$$k^2d^2 =\alpha(1)\beta(1) \leq k+\Sigma(1) = k+kd^2.$$
As $k \geq 2$, we conclude that $k=2$, $d=1$, $\dim(V) = \chi(1) = 4$. In this case, $k=2$ divides the order of the 
$p'$-group $J/P$, so $p \neq 2$. This contradicts both (a) and (b). 

\smallskip
(vi) We have shown that $k=1$, so that $\alpha|_Q = e\alpha_1$ and $\beta|_Q = f\overline{\alpha}_1$.  Now if $\alpha_1(1) = 1$,
then $\varphi|_Q = ef \cdot 1_Q$, contradicting \eqref{eq-ind2}. Hence $\alpha_1(1) > 1$. In this case, we have that 
$\alpha_1\overline{\alpha}_1$ is a character of degree $> 1$ that contains $1_Q$ with multiplicity $1$, and so 
$\alpha_1\overline{\alpha}_1$ contains $1_Q+\mu$ for some $1_Q \neq \mu \in \Irr(Q)$. Comparing the multiplicity of $\mu$ 
using \eqref{eq-ind2}, we see that $ef=1$. Thus $\chi(1) = \alpha_1(1)^2$, and so it is an even power of $p$, since 
$\alpha_1$ is an irreducible character of the $p$-group $P$. Furthermore, from \eqref{eq-ind2} we see that
$1=c \geq \dim(T)$, and so $\dim(T) = 1$.
This possibility also contradicts both (a) and (b).
\end{proof}

\begin{rmk}\label{2ex}
As shown in \cite[3.2 and 3.6]{Ka-CC} (or can be seen on the example of the dihedral group of order $2p$), 
there are $V$ of dimension $4$ which are tensor decomposable when $p > 2$.
Furthermore, there also tensor decomposable examples in any dimension $p^{2m}$ with tame part of dimension $1$. Indeed, consider 
the group $Q \rtimes C$ with $Q$ extraspecial of order $p^{1+2m}$ and $C$ cyclic of order $p^{2m}-1$ that acts 
transitively on $\Irr(P) \smallsetminus \{1_P\}$, where $P := Q/\bfZ(Q)$. One can show 
that $Q \rtimes C$ has a complex module $W$ that affords a faithful irreducible character $\alpha$ of degree $p^m$, and 
$(\alpha\overline{\alpha})|_Q$ is trivial on $\bfZ(Q) \cong C_p$ and equal to the regular character of $P$. This implies 
that $W \otimes W^*$ has tame part of dimension $1$ and irreducible totally wild part of dimension $p^{2m}-1$. (Also note 
that $W \otimes W^*$ is indecomposable {\it as $P \rtimes C$-module}, even though $P \rtimes C$ preserves this tensor decomposition.)
\end{rmk}


\section{The image of $I(\infty)$}
In this section, we concentrate on the wild part $$W=W(\psi,\chi_1,\ldots  ,\chi_n;\rho_1,\ldots  ,\rho_m)$$ of the $I(\infty)$ representation attached to a hypergeometric sheaf
$$\sH=\sH yp(\psi,\chi_1,\ldots  ,\chi_n;\rho_1,\ldots  ,\rho_m)$$
of type $(n,m)$ with $n>m \ge 0$. We recall from \cite[8.1.14]{Ka-ESDE} that for given $\psi$ the isomorphism class of $W$ as $I(\infty)$ representation depends only on the tame character $\prod_i\chi_i/\prod_j\rho_j$ and its rank $N:=n-m$.

\begin{lem}\label{inertiaimage}Suppose that $N:=n-m$ is prime to $p$. If $N$ is odd, suppose that $\prod_i\chi_i/\prod_j\rho_j = \triv$. If $N$ is even, suppose that $\prod_i\chi_i/\prod_j\rho_j = \chi_2$.
Denote by $f$ the multiplicative order of $p$ in $(\Z/N\Z)^\times$, so that $\F_{p^f}$ is the extension $\F_p(\mu_{N})$ of $\F_p$ obtained by adjoining the $N$'th roots of unity. The image of the wild inertia group $P(\infty)$ is isomorphic to (the Pontryagin dual of the additive group of)  $\F_{p^f}$, acting as the direct sum $$\oplus_{\zeta \in \mu_N(\F_{p^f})}\sL_{\psi_N(\zeta x)}$$
of the $N$ characters
$ x \mapsto \psi_{\F_{p^f}}(N\zeta x)$. 
The quotient group $I(\infty)/P(\infty)$ acts through its quotient $\mu_N(\F_{p^f})$ by permuting these characters: $\alpha \in \mu_N(\F_{p^f})$ maps $\sL_{\psi_N(\zeta x)}$ to $\sL_{\psi_N(\alpha \zeta x)}$. In particular, a primitive $N$'th root of unity cyclically permutes these $N$ characters.
\end{lem}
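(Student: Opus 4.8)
The plan is to identify $W$, as an $I(\infty)$-representation, with a direct image under the $N$-th power map, and then to read off the stated structure from the Mackey description of an induced module together with the ramification theory of tame Kummer covers. By \cite[8.1.14]{Ka-ESDE} the isomorphism class of $W$ depends only on $\psi$, on $N=n-m$, and on the tame character $\Lambda:=\prod_i\chi_i/\prod_j\rho_j$, so I may replace $\sH$ by any hypergeometric sheaf with the same $(\psi,N,\Lambda)$ (and, harmlessly, by a finite extension of the ground field so that $\mu_N$ lies in it). A convenient one is the Kloosterman-type sheaf $\sH':=\sH yp(\psi;\{\Lambda_0\mid\Lambda_0^N=\triv\})$ of type $(N,0)$, whose entire $I(\infty)$-representation is totally wild and hence equals $W$: its invariant $\prod_{\Lambda_0^N=\triv}\Lambda_0$ is exactly $\triv$ when $N$ is odd and $\chi_2$ when $N$ is even (the characters of order dividing $N$ pair with their inverses, the self-inverse ones being $\triv$, together with $\chi_2$ when $N$ is even), which is precisely the value of $\prod_i\chi_i/\prod_j\rho_j$ postulated in the two cases of the lemma.

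Next, $\sH'$ is Kummer induced in the sense of Proposition~\ref{wheninduced}(i): its list of ``upstairs'' characters is stable under multiplication by any character of order dividing $N$, and it has no ``downstairs'' characters. Hence $\sH'\cong[N]_\star\sG'$, where $[N]\colon\G_m\to\G_m$ is $x\mapsto x^N$ and $\sG'$ is a hypergeometric sheaf of type $(1,0)$; and Katz's Kummer-pullback formulas \cite[8.9.1 and 8.9.2]{Ka-ESDE} force $\sG'$ to be $\sL_{\psi_N}$, since the single ``upstairs'' character of $\sG'$ must be trivial (the upstairs characters of $[N]_\star\sG'=\sH'$ being all the $N$-th roots of $\triv$). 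As $[N]$ is tamely and totally ramified at $\infty$ of degree $N$, restricting to inertia there gives $W\cong[N]_\star\sL_{\psi_N}|_{I(\infty)}\cong\mathrm{Ind}_{I_N}^{I(\infty)}\sL_{\psi_N}$, where $I_N\lhd I(\infty)$ is the image of the inertia group of the source, i.e. the unique index-$N$ subgroup of $I(\infty)$ containing $P(\infty)$ (unique and normal because $N$ is prime to $p$ and $I(\infty)/P(\infty)$ is pro-cyclic), with $I(\infty)/I_N\cong\mu_N(\F_{p^f})$ canonically via the tame character.

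Now make the induced module explicit. Since $P(\infty)\leq I_N$ and $P(\infty)\lhd I(\infty)$, Mackey's formula gives $W|_{P(\infty)}\cong\bigoplus_{\zeta\in\mu_N(\F_{p^f})}{}^{g_\zeta}\bigl(\sL_{\psi_N}|_{P(\infty)}\bigr)=\bigoplus_{\zeta\in\mu_N(\F_{p^f})}\sL_{\psi_N(\zeta x)}|_{P(\infty)}$, the coset representatives $g_\zeta$ being the deck transformations of $[N]$, which conjugate the Artin--Schreier character $t\mapsto\psi_{\F_{p^f}}(Nt)$ to $t\mapsto\psi_{\F_{p^f}}(N\zeta t)$. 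Each summand is the composite of the natural surjection of $P(\infty)$ onto the additive group $\F_{p^f}$ — the finite quotient through which the break-$1/N$ local monodromy factors — with $t\mapsto\psi_{\F_{p^f}}(N\zeta t)$; and because the $\F_p$-span of $\{N\zeta\mid\zeta\in\mu_N\}$ is all of $\F_p(\mu_N)=\F_{p^f}$ (as $\mu_N$ is a multiplicative set generating $\F_{p^f}$ over $\F_p$ and $N$ is invertible there), these $N$ additive characters have trivial common kernel. Hence $W|_{P(\infty)}$ factors through a single surjection onto $\F_{p^f}$, and the image of $P(\infty)$ in $\GL(W)$ is $\{t\mapsto(\psi_{\F_{p^f}}(N\zeta t))_{\zeta}\mid t\in\F_{p^f}\}\cong\F_{p^f}$ (a finite abelian group being noncanonically its own Pontryagin dual). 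Finally, from the induced-module structure an element of $I(\infty)$ mapping to $\alpha\in\mu_N(\F_{p^f})=I(\infty)/I_N$ carries the $\zeta$-summand isomorphically onto the $\alpha\zeta$-summand (conjugating $t\mapsto\psi_{\F_{p^f}}(N\zeta t)$ by the deck transformation for $\alpha$ gives $t\mapsto\psi_{\F_{p^f}}(N\alpha\zeta t)$), so the image of $I(\infty)$ is $\F_{p^f}\rtimes\mu_N(\F_{p^f})$ acting as stated, and a primitive $N$-th root of unity cyclically permutes the $N$ characters.

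The only substantive point is the identification of $W$ with the \emph{untwisted} direct image $[N]_\star\sL_{\psi_N}$: for a general value of $\prod_i\chi_i/\prod_j\rho_j$ one instead obtains this direct image tensored by a nontrivial tame (Kummer) character, under which $I(\infty)/P(\infty)$ would no longer act through $\mu_N$ by pure permutations. That the twist vanishes exactly in the $\triv$/$\chi_2$ dichotomy is what the computation with Proposition~\ref{wheninduced}(i) and \cite[8.9.1 and 8.9.2]{Ka-ESDE} delivers (equivalently, it is the quadratic-Gauss-sum bookkeeping carried by the discriminant character $\chi_2^{N-1}$ of the degree-$N$ Kummer cover); the reduction via \cite[8.1.14]{Ka-ESDE} and the final Mackey and ramification computations are routine.
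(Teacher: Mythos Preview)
Your proof is correct and follows essentially the same route as the paper's: reduce via \cite[8.1.14]{Ka-ESDE} to the Kloosterman sheaf $\sK l(\psi;\mbox{all characters of order dividing }N)$, identify this with $[N]_\star\sL_{\psi_N}$, and read off the $P(\infty)$-image and the $I(\infty)/P(\infty)$-action from the induced representation. The paper simply cites \cite[5.6.2]{Ka-GKM} for the second step and \cite[Lemma 1.2]{Ka-RL-T-Co3} for the $P(\infty)$-image; you instead derive the $[N]_\star$ identification via Kummer-induction and supply the Mackey/$\F_p$-span argument explicitly, which is more self-contained but otherwise the same argument.

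Two small points to tighten. First, your appeal to Proposition~\ref{wheninduced}(i) is formally off: that proposition is stated for type $(n,m)$ with $m>0$ (your $\sH'$ has $m=0$), and in any case it only says that \emph{if} $\sH$ is induced then the character lists are stable, not the converse you need. The converse (together with the change of additive character $\psi\mapsto\psi_N$ that makes $\sG'=\sL_{\psi_N}$ rather than $\sL_\psi$) is what \cite[8.9.1, 8.9.2]{Ka-ESDE} actually supplies, or one can just cite \cite[5.6.2]{Ka-GKM} directly as the paper does. Second, your sentence ``the finite quotient through which the break-$1/N$ local monodromy factors'' is slightly misleading: on the source the sheaves $\sL_{\psi_N(\zeta x)}$ have break $1$, and the relevant point is that $P(\infty)$ for source and target coincide (the Kummer cover being tame), so all these Artin--Schreier characters factor through the common $\F_{p^f}$-quotient of $P(\infty)$ determined by $y^{p^f}-y=x$.
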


\begin{proof}From  \cite[8.1.14]{Ka-ESDE}, we see that our $W$ occurs as the $I(\infty)$ representation attached to the Kloosterman sheaf
$$\sK l(\psi;\mbox{all\ characters\ of\ order\ dividing }N), $$
which in turn is known \cite[5.6.2]{Ka-GKM} to be geometrically isomorphic to the Kummer direct image $[N]_\star(\sL_{\psi_N(x)})$. This direct image is $I(\infty)$-irreducible because the $N$ multiplicative translates of $\sL_{\psi_N(x)}$ by $\mu_N(\F_{p^f})$ are pairwise $I(\infty)$-inequivalent.
The determination of the image of $P(\infty)$ is done exactly as in \cite[Lemma 1.2]{Ka-RL-T-Co3}. That the quotient group $I(\infty)/P(\infty)$ acts through its quotient $\mu_N(\F_{p^f})$ in the asserted way is implicit in the very definition of inducing a character from a normal subgroup of cyclic index $N$.
\end{proof}

\section{A particular class of hypergeometric sheaves}
We remain in characterstic $p > 0$, with a chosen $\ell \neq p$, and a chosen nontrivial additive character $\psi$ of $\F_p$. Fix two  integers $A,B \ge 3$ with $\gcd(A,B)=1$ and both $A,B$ prime to $p$. We denote by
$$\sH yp(\psi,A\times B;\triv)$$
 the hypergeometric sheaf whose ``upstairs" characters are the $(A-1)(B-1)$ characters of the form
$\chi \rho$ with $\chi \neq \triv, \chi^A=\triv$ and $\rho \neq \triv, \rho^B=\triv$, and whose ``downstairs" character is the single character $\triv$. It is defined on $\G_m/\F_q$ for any finite extension of $\F_p$ containing the $AB$'th roots of unity. One knows \cite[8.8.13]{Ka-ESDE} that $\sH yp(\psi,A\times B;\triv)$ is pure of weight $(A-1)(B-1)$, and geometrically irreducible. 

\begin{lem}\label{det} The determinant of $\sH yp(\psi,A\times B;\triv)$ is geometrically trivial.
\end{lem}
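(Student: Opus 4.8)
**The plan is to compute the determinant as a rank-one sheaf on $\G_m$ and show it is geometrically trivial by identifying both its tame local monodromies at $0$ and $\infty$ and its wild part, showing all are trivial.** The key input is the known formula for the determinant of a hypergeometric sheaf from \cite[8.11.5 and 8.12.2]{Ka-ESDE}: the determinant of $\sH yp(\psi,\chi_1,\dots,\chi_n;\rho_1,\dots,\rho_m)$ is, up to a geometrically constant twist, the Kummer sheaf $\sL_{\prod_i \chi_i / \prod_j \rho_j}$ tensored with a factor accounting for the wild part at $\infty$; since a geometrically constant sheaf is geometrically trivial, it suffices to check that the relevant product of characters is trivial and that the wild contribution to the determinant vanishes geometrically.

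First I would record the data for $\sH := \sH yp(\psi,A\times B;\triv)$: it has type $(n,m)$ with $n = (A-1)(B-1)$, $m = 1$, so the wild part $W$ at $\infty$ has rank $N = n - m = (A-1)(B-1) - 1 = AB - A - B = AB - A - B$. I'd compute $\prod_i \chi_i$, the product of all the upstairs characters: these are the $\chi\rho$ with $\chi \ne \triv$, $\chi^A = \triv$, $\rho \ne \triv$, $\rho^B = \triv$. Grouping, $\prod_{\chi,\rho} \chi\rho = \left(\prod_{\chi \ne \triv} \chi\right)^{B-1} \cdot \left(\prod_{\rho \ne \triv}\rho\right)^{A-1}$. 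Now the product of all nontrivial characters of a cyclic group of order $A$ equals $\prod_{\text{all }\chi} \chi$ (the trivial factor contributes nothing), and that product is the trivial character when $A$ is odd and the unique order-$2$ character when $A$ is even. Since $\gcd(A,B) = 1$, not both $A, B$ are even, so at least one of the two exponents $B-1$, $A-1$ is even, or the corresponding base is trivial; a short case check (using $\gcd(A,B)=1$) shows $\prod_i \chi_i = \triv$. Since $m = 1$ and $\rho_1 = \triv$, the full tame product $\prod_i\chi_i/\prod_j\rho_j = \triv$, so the tame part of the determinant is geometrically trivial.

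Next I would handle the wild contribution to the determinant at $\infty$. By Lemma \ref{inertiaimage} (applicable once one checks $N = AB-A-B$ is prime to $p$ — which follows since $p \nmid A$, $p \nmid B$, hence $p \nmid AB$, and one needs $p \nmid AB - A - B$; actually this requires a small argument, or one invokes the structure of $W$ directly from \cite[8.1.14]{Ka-ESDE} without the primitivity of $N$), the wild part $W$ is, as $I(\infty)$-representation, geometrically the Kummer induction $[N]_\star \sL_{\psi_N(x)}$, whose determinant is computed in \cite[8.12.2]{Ka-ESDE} or directly: the determinant of a Kummer-induced rank-one sheaf $[N]_\star \sL$ is $\sL_{\chi_2^{N-1}} \otimes \Nm(\sL)$-type, and the wild part of $\det$ has Swan conductor equal to $\Swan_\infty(W)$ only if ... — more cleanly, the determinant of the full hypergeometric $\sH$ has Swan conductor at $\infty$ equal to that of $\sH$ itself, namely $1$, UNLESS the $\psi$-contributions cancel. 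The cleanest route: \cite[8.11.6]{Ka-ESDE} gives $\det \sH yp$ explicitly, and one reads off that geometrically $\det \sH \cong \sL_{\prod\chi_i/\prod\rho_j}$ when $n - m$ is such that the wild determinant is trivial, i.e., here when $AB - A - B$ is odd, and more generally $\det \sH \cong \sL_{\prod \chi_i/\prod\rho_j} \otimes \sL_{\psi}^{?}$ — but the $\psi$-twist is geometrically trivial (an Artin–Schreier sheaf pulled back by a \emph{linear} map is geometrically $\sL_\psi$, which is wildly ramified, so this needs care).

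**The main obstacle is the wild part of the determinant at $\infty$.** Being careful: the determinant of the rank-$N$ wild piece $W$ could a priori have Swan conductor up to $1$, which would make it wildly, hence nontrivially, ramified. So the crux is to show $\Swan_\infty(\det W) = 0$. Using Lemma \ref{inertiaimage}, $W|_{I(\infty)} \cong \bigoplus_{\zeta \in \mu_N} \sL_{\psi_N(N\zeta x)}$ after the relevant identification (working on the $N$-fold cover), so $\det(W)$ pulls back, on the cover, to $\sL_{\psi_N(N(\sum_\zeta \zeta)x)}$; and $\sum_{\zeta \in \mu_N(\overline{\F_p})} \zeta = 0$ as long as $N > 1$ (sum of all $N$th roots of unity). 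Hence $\det W$ becomes trivial on the $\mu_N$-cover, so its Swan conductor downstairs is $0$, i.e. $\det W$ is tame at $\infty$; combined with the fact that its tame part is a power of $\chi_2$ that is absorbed into the already-verified triviality of $\prod\chi_i/\prod\rho_j$ (or separately a power of $\chi_2$ which one checks is trivial by parity), we get $\det \sH$ geometrically trivial at $\infty$. Since $\det\sH$ is lisse of rank one on $\G_m$, tame everywhere, with trivial local monodromy at both $0$ and $\infty$, it is geometrically trivial. I would write the argument to lean on \cite[8.11.5, 8.11.6]{Ka-ESDE} for the explicit determinant formula wherever possible, reducing the work to the two character-product computations and the vanishing $\sum_{\zeta\in\mu_N}\zeta = 0$.
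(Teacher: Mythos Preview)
Your computation of the product of the upstairs characters is correct and agrees with the paper: since $\gcd(A,B)=1$, at least one of $A,B$ is odd, and for odd $A$ the product of all nontrivial characters of order dividing $A$ is trivial, so $\prod_i \chi_i = \triv$.

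Your treatment of the wild contribution, however, is overcomplicated and has a real gap. Lemma~\ref{inertiaimage} requires $N = n-m$ to be prime to $p$, and this can fail: for instance $p=5$, $A=3$, $B=4$ gives $N=(A-1)(B-1)-1=5$. The lemma also requires $\prod_i\chi_i/\prod_j\rho_j$ to equal $\chi_2$ when $N$ is even, whereas you have just shown this product is trivial, so the hypotheses are not met in that case either. Your fallback references to \cite[8.1.14]{Ka-ESDE} and \cite[8.11.6]{Ka-ESDE} are gestures rather than arguments, and the sum-of-roots-of-unity computation on the $N$-fold cover never gets off the ground without the primality hypothesis.

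The paper bypasses all of this with a one-line observation you are missing. Since $A,B \ge 3$, the rank $(A-1)(B-1)$ is at least $4$, so the wild part $\Wild$ has rank $n-m \ge 3$ and all its $\infty$-breaks equal $1/(n-m) < 1$. The determinant $\det(\Wild)$ is a rank-one representation of $I(\infty)$; its unique break equals its Swan conductor, which is a non-negative integer, and is bounded above by the maximal break of $\Wild$. Hence this break is $0$, i.e.\ $\det(\Wild)$ is tame. It follows that $\det(\sH)$ is a rank-one lisse sheaf on $\G_m$, tame at both $0$ and $\infty$, hence a Kummer sheaf $\sL_\chi$; reading off $\chi$ from local monodromy at $0$ gives $\chi = \prod_i\chi_i = \triv$. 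No covers, no explicit determinant formulas, and no case analysis on $p \mid N$ are needed.
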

\begin{proof}Because both $A,B \ge 3$, the rank $(A-1)(B-1)$ is $\ge 4$. Hence the wild part $\rm{Wild}$ of the $I(\infty)$ representation has dimension $(A-1)(B-1) - 1 \ge 3 >2$, so all slopes $<1$, and hence $\det(\rm{Wild})$ must be tame. Therefore $\det(\sH yp)$ is tame,
and must be equal to the product of its $(A-1)(B-1) $ ``upstairs" characters, the $\chi_i \rho_j$. At least one of $A,B$ must be odd (because they are relatively prime), and therefore the product of the $\chi_i \rho_j$ is trivial.
\end{proof}

\begin{lem}\label{selfdual}$\sH yp(\psi,A\times B;\triv)$ is geometrically self dual precisely in the case $p=2$, and in that case it is orthogonally self dual.
\end{lem}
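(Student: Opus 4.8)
The plan is to extract everything from the local monodromy of $\sH := \sH yp(\psi,A\times B;\triv)$ at $\infty$, using the explicit description of the wild part in Lemma~\ref{inertiaimage} together with the rigidity of hypergeometric sheaves. First I would record two numerical facts. Since $\gcd(A,B)=1$, at least one of $A,B$ is odd, so $(A-1)(B-1)$ is even and hence the rank $N:=(A-1)(B-1)-1$ of the totally wild part $\Wild_N$ of the $I(\infty)$-representation is odd (and prime to $p$, since $A,B$ are). Also, as in the proof of Lemma~\ref{det}, the product of the $(A-1)(B-1)$ upstairs characters $\chi_i$ is trivial, and the unique downstairs character is $\triv$, so $\prod_i\chi_i/\prod_j\rho_j=\triv$. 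With $N$ odd and this product trivial, Lemma~\ref{inertiaimage} applies and describes $\Wild_N$, as a representation of $P(\infty)$, as the multiplicity-free sum $\bigoplus_{\zeta\in\mu_N(\overline{\F_p})}\sL_{\psi_N(\zeta x)}$ of $N$ distinct characters of the finite image of $P(\infty)$, naturally indexed by the set $\mu_N(\overline{\F_p})$.

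For the ``only if'' direction I would argue as follows. Suppose $\sH$ is geometrically self-dual. Restricting to $P(\infty)$ (whose $\overline{\Q_\ell}$-representations are semisimple, being representations of a finite $p$-group) and noting that the tame part $\triv$ is killed by $P(\infty)$ while $\Wild_N$ has no nonzero $P(\infty)$-invariants, we get $\Wild_N\cong\Wild_N^\vee$ as $P(\infty)$-representations. But $\Wild_N^\vee|_{P(\infty)}=\bigoplus_{\zeta\in\mu_N(\overline{\F_p})}\sL_{\psi_N(-\zeta x)}$, with character set indexed by $-\mu_N(\overline{\F_p})$; comparing these two multiplicity-free sets forces $-\mu_N(\overline{\F_p})=\mu_N(\overline{\F_p})$, i.e.\ $-1\in\mu_N(\overline{\F_p})$, i.e.\ $(-1)^N=1$. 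As $N$ is odd, this happens only when $p=2$.

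For the ``if'' direction, assume $p=2$. Then $-1=1$, so $\bar\psi=\psi$; moreover inversion $\xi\mapsto\bar\xi$ permutes the nontrivial characters of order dividing $A$ and, separately, those of order dividing $B$, so the multiset $\{\bar\chi_i\}=\{\bar\chi\,\bar\rho\}$ equals $\{\chi_i\}=\{\chi\rho\}$, while $\{\bar\rho_j\}=\{\triv\}=\{\rho_j\}$. Using the standard fact that the dual of a hypergeometric sheaf of type $(n,m)$ is again one of type $(n,m)$, obtained by replacing $\psi$ with $\bar\psi$ and each multiplicative character with its inverse (see \cite{Ka-ESDE}), we get that $\sH^\vee$ is geometrically the hypergeometric sheaf with additive character $\bar\psi=\psi$, upstairs characters $\{\bar\chi_i\}=\{\chi_i\}$, and downstairs character $\bar\triv=\triv$ — that is, $\sH$ itself, invoking the rigidity of hypergeometric sheaves \cite[8.4.2]{Ka-ESDE} to absorb any residual multiplicative translation. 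Thus $\sH$ is geometrically self-dual, and it remains to see the autoduality is orthogonal. Since $\sH$ is geometrically irreducible the autoduality is a nondegenerate $\pi_1(\G_m/\overline{\F_p})$-invariant bilinear form, unique up to scalar and either symmetric or alternating; I would rule out ``alternating'' by restricting to $I(\infty)$, where $\sH=T\oplus\Wild_N$ with $T$ the trivial one-dimensional representation, $\Wild_N$ irreducible of rank $N\geq 3$, and $T\not\cong\Wild_N$. Self-duality of $\sH|_{I(\infty)}$ forces $T$ and $\Wild_N$ each to be self-dual, so $\Hom_{I(\infty)}(T,\Wild_N^\vee)=\Hom_{I(\infty)}(T,\Wild_N)=0$, whence the form pairs $T$ against $\Wild_N$ trivially and restricts to a nondegenerate, hence nonzero, form on the one-dimensional space $T$. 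A nonzero bilinear form on a one-dimensional space is symmetric, while the restriction of an alternating form is alternating; therefore the form on $\sH$ is symmetric, i.e.\ $\sH$ is orthogonally self-dual.

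The only slightly delicate points are bookkeeping: checking the hypotheses of Lemma~\ref{inertiaimage} (this is why I record the parity of $N$ and the triviality of $\prod_i\chi_i/\prod_j\rho_j$ first) and quoting the duality formula for hypergeometric sheaves in a form that makes $\sH^\vee\cong\sH$ transparent once $\bar\psi=\psi$ and $\{\bar\chi_i\}=\{\chi_i\}$. The actual content is the elementary observation that $\Wild_N$ is $P(\infty)$-self-dual exactly when $-1$ is an $N$-th root of unity in $\overline{\F_p}$, which for odd $N$ pins down $p=2$; orthogonality is then automatic from the one-dimensional tame summand at $\infty$.
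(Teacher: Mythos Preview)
Your ``if'' direction and your orthogonality argument are both correct and in fact close to what \cite[8.8.1, 8.8.2]{Ka-ESDE} ultimately do. The problem is in the ``only if'' direction: you assert that $N=(A-1)(B-1)-1=AB-A-B$ is prime to $p$ ``since $A,B$ are'', but this is simply false in general. For instance take $p=5$, $A=3$, $B=4$: then $\gcd(A,B)=1$, both are prime to $p$, but $N=12-3-4=5$ is divisible by $p$. Lemma~\ref{inertiaimage} therefore does not apply, and without it the description of $\Wild_N|_{P(\infty)}$ as a multiplicity-free sum of $N$ one-dimensional characters indexed by $\mu_N(\overline{\F_p})$ breaks down (when $p\mid N$, the $P(\infty)$-isotypical components of $\Wild_N$ are $P$-irreducible of dimension a positive power of $p$, not one-dimensional). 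So your key step, reading off $(-1)^N=1$ from $-\mu_N=\mu_N$, has no foundation in those cases.

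The paper's route via \cite[8.8.1]{Ka-ESDE} avoids this entirely: the geometric dual of a hypergeometric of type $(n,m)$ is the hypergeometric with characters $\overline{\psi},\{\bar\chi_i\},\{\bar\rho_j\}$ \emph{pulled back by multiplicative translation by $(-1)^{n+m}$}. Here $n=(A-1)(B-1)$ is even and $m=1$, so $n+m$ is odd and the translation is by $-1$; for $p$ odd this is a nontrivial translate, and two hypergeometrics that are nontrivial multiplicative translates of one another are never geometrically isomorphic. That is the obstruction you should be using. (Your parenthetical remark about ``absorbing any residual multiplicative translation'' via rigidity is also off: rigidity determines a hypergeometric only \emph{up to} multiplicative translation, it does not identify distinct translates.)
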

\begin{proof}This is immediate from \cite[8.8.1 and 8.8.2]{Ka-ESDE}, because, as noted above, at least one of $A,B$ is odd, and
hence $\sH yp(\psi,A\times B;\triv)$ has even rank, but only one tame character ``downstairs", namely $\triv$. And it is obvious that the ``upstairs" characters, the $\chi_i \rho_j$, are stable by complex conjugation (indeed by all of $\Gal(\overline{\Q}/\Q)$).
\end{proof}

In terms of Kubert's $V$ function, we have the following criterion for finite monodromy.
\begin{lem}\label{finitemonocrit}Let $\F_q$ be a finite extension of $\F_p$ containing the $AB^{\mathrm {th}}$ roots of unity. Then the Tate twist
$$\sH yp(\psi,A\times B;\triv)((A-1)(B-1)/2)$$
has finite geometric and arithmetic monodromy groups if and only if, for all $x \in (\Q/\Z)_{\mathrm {prime \ to \ }p}$, we have
$$V(ABx) + V(x) + V(-x) \ge V(Ax) +V(Bx).$$
Equivalently, since this trivially holds for $x=0$, the criterion is that for all nonzero $x \in (\Q/\Z)_{\mathrm{prime \ to \ }p}$, we have
$$V(ABx) + 1 \ge V(Ax) +V(Bx).$$
\end{lem}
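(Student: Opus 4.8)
The plan is to invoke Katz's criterion for finite monodromy of hypergeometric sheaves and to translate it, by means of the Hasse--Davenport multiplication formula and Stickelberger's theorem on $p$-adic valuations of Gauss sums, into the asserted inequality for Kubert's $V$.

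\emph{Step 1 (reduction to a $p$-adic integrality condition).} By the criterion of Katz \cite{Ka-ESDE} (cf.\ the analogous argument in \cite{Ka-RL-T-Co3}), the Tate twist of a hypergeometric sheaf that makes it pure of weight $0$ for the chosen $\iota$ has finite geometric and arithmetic monodromy if and only if all the scalars by which Frobenius acts on the one-dimensional Mellin transforms $H^1_c(\G_m,\sH\otimes\sL_\Lambda)$, as $\Lambda$ ranges over the multiplicative characters of the finite extensions of $\F_q$, are roots of unity. I would first observe that here this reduces to a purely $p$-adic statement: the list of ``upstairs'' characters $\{\chi\rho:\chi^A=\triv\neq\chi,\ \rho^B=\triv\neq\rho\}$ and the single ``downstairs'' character $\triv$ are each stable under $\Gal(\overline\Q/\Q)$, so every Galois conjugate of $\sH yp(\psi,A\times B;\triv)$ is again a multiplicative translate of a hypergeometric sheaf of exactly the same shape, hence is again $\iota$-pure of weight $(A-1)(B-1)$ by Deligne; consequently the archimedean sizes of those Mellin scalars are automatically correct in every embedding, and --- each being, up to a power of $q$, a ratio of Gauss sums, whose only prime divisors lie over $p$ --- ``root of unity'' is equivalent to ``algebraic integer, together with its inverse, in every Galois conjugate''. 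Moreover, since by Lemma \ref{det} the determinant is geometrically trivial, the product over all $\Lambda$ of a fixed finite extension of the normalized Mellin scalars is a root of unity, so it suffices to check that each normalized Mellin scalar is an algebraic integer in every Galois conjugate --- the ``unit'' half is then automatic.

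\emph{Step 2 (the computation).} Writing $\Lambda$ for the character attached to $x\in(\Q/\Z)_{\mathrm{prime\ to\ }p}$ (so that $\Lambda^k\leftrightarrow kx$, and letting $\Lambda$ run over all characters of all extensions sweeps out all such $x$), the relevant Mellin scalar equals, up to sign and a power of $q$, $\prod_{\chi,\rho} g(\psi,\chi\rho\Lambda)$ --- the product over the upstairs characters --- divided by $g(\psi,\Lambda)$, the single downstairs factor. Using $\gcd(A,B)=1$, inclusion--exclusion over the $A$-th and the $B$-th roots of unity rewrites $\prod_{\chi^A=\triv\neq\chi,\ \rho^B=\triv\neq\rho}g(\psi,\chi\rho\Lambda)$ as $g(\psi,\Lambda)$ times a quotient of the three full products $\prod_{\chi^A=\triv,\ \rho^B=\triv}$, $\prod_{\chi^A=\triv}$ and $\prod_{\rho^B=\triv}$. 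The Hasse--Davenport multiplication formula applied to each of these --- using once more that $\rho\mapsto\rho^A$ permutes the $B$-th roots of unity --- collapses the numerator to $g(\psi,\Lambda^{AB})$ and the two denominators to $g(\psi,\Lambda^A)$ and $g(\psi,\Lambda^B)$, all up to explicit factors not involving $\Lambda$. Rewriting $1/g(\psi,\Lambda)$ as $g(\overline\psi,\overline\Lambda)/q$ and invoking Stickelberger's theorem (Gross--Koblitz), $\mathrm{ord}_q\, g(\psi,(\text{character attached to }y))=V(y)$, the $p$-adic valuation of the Mellin scalar becomes
$$V(ABx)+V(x)+V(-x)-V(Ax)-V(Bx)$$
plus a constant contributed by the $\Lambda$-independent factors and the power of $q$. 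Matching that constant against the weight-$0$ normalization --- most cleanly by testing $x=0$, where the asserted inequality degenerates to $0\ge0$ --- identifies ``the normalized Mellin scalar is an algebraic integer'' with $V(ABx)+V(x)+V(-x)\ge V(Ax)+V(Bx)$ for all such $x$; finally $V(x)+V(-x)=1$ for $x\neq0$ together with triviality at $x=0$ gives the restated form.

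\emph{Main obstacle.} The real work is the bookkeeping in Step 2: extracting the precise shape of the Mellin--Frobenius scalar (in particular the ``trivial character'' anomaly $g(\psi,\triv)=-1$ and the exact power of $q$), verifying that the three Hasse--Davenport collapses leave behind only $\Lambda$-independent factors and no stray Kummer twist, and pinning the additive constant so that the inequality emerges with no spurious ``$+1$''. I expect this to follow the pattern of the corresponding lemma in \cite{Ka-RL-T-Co3}; granting it, the reduction in Step 1 (via Lemma \ref{det}) closes the argument in both directions.
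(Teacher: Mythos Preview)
Your proposal is correct and follows essentially the same route the paper sketches: the paper's proof is a one-line citation (``Entirely similar to the proof of \cite[Lemma 2.1]{Ka-RL-T-Co2}, using the Hasse--Davenport relation to simplify the Mellin transform calculation \cite[8.2.8]{Ka-ESDE}''), and what you have written is precisely an unpacking of that citation --- reduce to $p$-adic integrality of the normalized Mellin scalars, use inclusion--exclusion together with Hasse--Davenport on the three full products over $A$-th, $B$-th, and $AB$-th roots of unity to collapse to $g(\Lambda^{AB})g(\Lambda)/\bigl(g(\Lambda^A)g(\Lambda^B)\bigr)$ up to $\Lambda$-independent factors, and read off the valuation via Stickelberger as $V(ABx)+V(x)+V(-x)-V(Ax)-V(Bx)$. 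The only cosmetic difference is that the paper points to \cite{Ka-RL-T-Co2} rather than \cite{Ka-RL-T-Co3} for the template argument.
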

\begin{proof}Entirely similar to the proof of \cite[Lemma 2.1]{Ka-RL-T-Co2}, using the Hasse-Davenport relation to simplify
the Mellin transform calculation \cite[8.2.8]{Ka-ESDE} of the trace function of $\sH yp(\psi,A\times B;\triv)((A-1)(B-1)/2)$. 
\end{proof}

Although it is possible to descend $\sH yp(\psi,A\times B;\triv)$ to $\G_m/\F_p$, using \cite[8.8]{Ka-GKM}, we will instead give a ``more computable" descent to $\G_m/\F_p(\zeta_A)$. 

\begin{lem}\label{descent}Denote by $\chi_i$ the $A-1$ nontrivial characters of order dividing $A$. The lisse sheaf $\sH(\psi,A\times B)$ on $\G_m/\F_p(\zeta_A)$ whose trace function at a point $s \in K^\times$, $K/\F_p(\zeta_A)$ a finite extension, is given by
$$s \mapsto \biggl(\frac{-1}{\#K}\biggr)^{A -1}\sum_{(t_i )_i \in \G_m(K)^{A -1}}\psi\bigl(\frac{-\prod_i t_i}{s}\bigr)\prod_i \chi_i(t_i)
   \sum_{(x_i)_i \in \A^1(K)^{A -1}}\psi_K\biggl(B(\sum_i x_i) -\sum_ix_i^B/t_i\biggr)$$
is a descent to $\G_m/\F_p(\zeta_A)$ of a constant field twist of $\sH yp(\psi,A\times B;\triv)$.
\end{lem}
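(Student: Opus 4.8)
The plan is to identify the sum over the $x_i$ as (up to elementary factors) a product of one-variable exponential sums of Gauss-sum type, and then to recognize the resulting expression as the trace function of the hypergeometric sheaf $\sH yp(\psi,A\times B;\triv)$ computed via its Mellin transform. First I would treat the inner sum
$$\sum_{(x_i)_i \in \A^1(K)^{A-1}}\psi_K\Bigl(B\sum_i x_i - \sum_i x_i^B/t_i\Bigr) = \prod_i \Bigl(\sum_{x \in \A^1(K)} \psi_K(Bx - x^B/t_i)\Bigr),$$
so the whole expression factors, over the $(t_i)_i$, into a convolution-type sum. Each factor $S(t_i) := \sum_{x \in K}\psi_K(Bx - x^B/t_i)$ is a one-variable sum attached to the polynomial $Bx - x^B/t$; since $B$ is prime to $p$, one expands $\psi_K(Bx)$ in terms of multiplicative characters of order dividing $B$ (the standard Gauss-sum expansion of an additive character composed with a monomial after the substitution clearing $t_i$), and obtains that $S(t_i)$ is, up to a monomial in $t_i$ and Gauss sums $g(\psi_K,\rho)$ over the nontrivial $\rho$ with $\rho^B=\triv$, a sum of Kummer-sheaf trace values — i.e. $S$ is the trace function of a tame rank-$(B-1)$ sheaf on $\G_m$, namely $\bigoplus_{\rho^B=\triv,\ \rho\neq\triv}\sL_{\rho}$ twisted by Gauss sum constants and a monomial. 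The factor $\bigl(\tfrac{-1}{\#K}\bigr)^{A-1}$ and the signs are exactly the normalization that turns this collection of Gauss sums into the Tate-twist/constant-field-twist bookkeeping.

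Next I would feed this back into the outer sum. After substituting the character-expanded form of each $S(t_i)$, the sum over $(t_i)_i \in \G_m(K)^{A-1}$ against $\psi(-\prod_i t_i/s)\prod_i\chi_i(t_i)$ becomes precisely the iterated multiplicative-convolution integral that computes, via \cite[8.2.8]{Ka-ESDE}, the trace function of a hypergeometric sheaf whose upstairs characters are exactly the products $\chi_i\rho$ with $\chi_i^A=\triv$ nontrivial and $\rho^B=\triv$ nontrivial — that is, the $(A-1)(B-1)$ characters defining $\sH yp(\psi,A\times B;\triv)$ — and whose single downstairs character is $\triv$. The Hasse–Davenport relation is the tool that collapses the product of Gauss sums $\prod g(\psi_K,\chi_i)\cdot\prod g(\psi_K,\rho)$ into the single normalizing constant, exactly as in the proof of Lemma \ref{finitemonocrit}; the monomial factors in $t_i$ coming from each $S(t_i)$ are absorbed into a multiplicative translation of the argument $s$, which accounts for the phrase ``constant field twist of'' and does not affect the sheaf up to the allowed ambiguity. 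Finally, since the displayed trace function is defined over every finite extension $K$ of $\F_p(\zeta_A)$ — the field $\F_p(\zeta_A)$ being exactly what is needed to make sense of the characters $\chi_i$ of order dividing $A$ while $B$-th roots of unity need not be present — and agrees with the trace function of (a twist of) $\sH yp(\psi,A\times B;\triv)$ after base change to a field containing $\mu_{AB}$, Chebotarev together with the irreducibility of $\sH yp$ forces $\sH(\psi,A\times B)$ to be the asserted descent.

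The main obstacle will be the bookkeeping of constants and the monomial translation: making sure that the Gauss-sum factors extracted from the $A-1$ copies of $S(t_i)$ and from the outer $\chi_i$-twisted convolution assemble, via Hasse–Davenport, into exactly one constant of absolute value a power of $\sqrt{\#K}$ (so that the result is a constant-field twist and nothing worse), and that the monomials in the $t_i$ contribute only an overall multiplicative translation of $s$ rather than changing the isomorphism type. The structural steps — factoring the inner sum, Gauss-expanding each factor, and recognizing the outer integral as a hypergeometric Mellin transform — are routine given the machinery already invoked in Lemmas \ref{finitemonocrit} and \ref{descent}'s predecessors; it is the precise matching of normalization that requires care, and I would carry it out by comparing both sides at the level of their $L$-functions on $\G_m/K$ for $K \supseteq \F_p(\mu_{AB})$, where \cite[8.2.8]{Ka-ESDE} gives the hypergeometric side in closed form.
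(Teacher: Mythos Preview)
Your overall architecture---factor the inner sum as $\prod_i S(t_i)$, identify each factor sheaf-theoretically, then recognize the outer sum as a multiplicative convolution---is correct and is essentially what the paper does, but the paper's execution is considerably more direct and avoids almost all of the computational obstacles you anticipate.

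The paper does not pass through Mellin transforms or Hasse--Davenport at all. Instead it observes that, \emph{by the very definition of a hypergeometric sheaf as an iterated multiplicative $!$-convolution}, $\sH yp(\psi,A\times B;\triv)$ is the convolution of $\sL_{\psi(-1/x)}$ with the $A-1$ sheaves $\sL_{\chi_i}\otimes\sK l(\psi;\ \rho\neq\triv,\ \rho^B=\triv)$; the outer sum over $(t_i)_i$ with kernel $\psi(-\prod_i t_i/s)$ is exactly this convolution written out on trace functions. The one-variable factor $S(t)=\sum_{x}\psi_K(Bx-x^B/t)$ is then handled not by any Gauss-sum expansion but by quoting \cite[Lemma~1.2]{Ka-RL-T-Co2}, which already identifies $t\mapsto -S(t)$ as the trace function of a descent to $\G_m/\F_p$ of (a constant-field twist of) $\sK l(\psi;\ \rho\neq\triv,\ \rho^B=\triv)$. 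The factor $(-1/\#K)^{A-1}$ is then just the product of the $A-1$ minus signs and weight normalizations coming from that citation---no Hasse--Davenport and no monomial translation of $s$ enter.

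Two specific points of imprecision in your sketch: first, the phrase ``expand $\psi_K(Bx)$ in terms of multiplicative characters of order dividing $B$'' does not make sense as stated---an additive character value is not a sum of multiplicative character values; what actually underlies the identification of $S(t)$ with a Kloosterman trace is the $B$-power substitution (or, equivalently, the proof of the cited lemma). Second, Hasse--Davenport is the tool for Lemma~\ref{finitemonocrit}, where one is simplifying products of Gauss sums in the Mellin picture; here it is not needed, because the argument never leaves the convolution picture. Your route would eventually work, but you would in effect be re-proving \cite[Lemma~1.2]{Ka-RL-T-Co2} and then re-deriving the convolution formula for hypergeometric traces, whereas the paper just invokes both.
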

\begin{proof}Separate the numerator characters into packets $\chi_i \times (\rm{all\ allowed\ }\rho)$, indexed by the $A-1$ nontrivial $\chi_i$. Each 
of these packets is the list of characters for $\sL_{\chi_i}\otimes \sK l(\psi, \rho \neq \triv, \rho^B=\triv)$. The multiplicative $_{\star, !}$ 
convolution of  $\sL_{\psi(-1/x)}$ with all of these is, by definition, the hypergeometric sheaf $\sH yp(\psi,A\times B;\triv)$. 

As proven in \cite[Lemma 1.2]{Ka-RL-T-Co2}, the Kloosterman sheaf $$\sK l(\psi,\mbox{all\ nontrivial\ characters\ of\  order\  dividing\ }B)$$
has a descent to (a constant field twist of) the local system $\sB _0$ on $\G_m/\F_p$ whose trace function is
$$t \in K^\times \mapsto - \sum_{x \in K}\psi_K(-x^B/t +Bx).$$ Convolving these $\sL_{\chi_i}\otimes \sB_0$ gives the assertion. 
\end{proof}

\begin{lem}\label{weight}The lisse sheaf $\sH(\psi,A\times B)$ is pure of weight zero.
\end{lem}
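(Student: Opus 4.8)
The plan is to read a convolution description of $\sH(\psi,A\times B)$ off Lemma \ref{descent} and then compute the weight by tracking weights through that convolution. To begin, Lemma \ref{descent} already exhibits $\sH(\psi,A\times B)$ as, geometrically, a constant field twist of $\sH yp(\psi,A\times B;\triv)$, which by \cite[8.8.13]{Ka-ESDE} is geometrically irreducible and pure of weight $(A-1)(B-1)$. Hence $\sH(\psi,A\times B)$ is a lisse, geometrically irreducible sheaf of rank $(A-1)(B-1)$ on $\G_m/\F_p(\zeta_A)$ which is pure of \emph{some} weight $w$, and everything reduces to showing $w=0$.

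Next I would match the explicit trace function of Lemma \ref{descent} to an $!$-convolution. Using $\mathrm{tr}_{\sF*_!\sG}(s)=-\sum_{ab=s}\mathrm{tr}_\sF(a)\,\mathrm{tr}_\sG(b)$ together with the trace function $t\mapsto-\sum_{x}\psi_K(Bx-x^B/t)$ of the rank $B-1$ sheaf $\sB_0$ from \cite[Lemma 1.2]{Ka-RL-T-Co2} (equivalently, $\sB_0\cong R^1\mathrm{pr}_!\,\sL_{\psi(Bx-x^B/t)}$ for the projection $\mathrm{pr}\colon\A^1_x\times\G_{m,t}\to\G_{m,t}$), one checks that, as semisimple lisse sheaves on $\G_m/\F_p(\zeta_A)$,
$$\sH(\psi,A\times B)\ \cong\ \bigl(\sL_{\psi(-1/x)}*_!(\sL_{\chi_1}\otimes\sB_0)*_!\cdots*_!(\sL_{\chi_{A-1}}\otimes\sB_0)\bigr)\otimes\alpha^{\deg},$$
where $\alpha^{\deg}$ is the geometrically constant rank one sheaf accounting for the factor $(-1/\#K)^{A-1}$ in that trace function; its geometric Frobenius over $\F_p(\zeta_A)$ acts by a number of absolute value $(\#\F_p(\zeta_A))^{-(A-1)}$, so $\alpha^{\deg}$ is pure of weight $-2(A-1)$.

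It then remains to weigh the convolution. The sheaves $\sL_{\psi(-1/x)}$ and $\sL_{\chi_i}$ are pure of weight $0$; and $\sB_0$ is pure of weight $1$, being a constant field twist of a Kloosterman sheaf and, in its incarnation $R^1\mathrm{pr}_!\,\sL_{\psi(Bx-x^B/t)}$, pure of weight $1$ by Deligne, since $\sL_{\psi(Bx-x^B/t)}$ is lisse of rank one and totally wild at $\infty$ on $\A^1_x$ (whence $R^0\mathrm{pr}_!=R^2\mathrm{pr}_!=0$) — equivalently, because the Weil bound for the one-variable sum $\sum_x\psi(Bx-x^B/t)$ is sharp. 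Since the $!$-convolution of these hypergeometric-type sheaves is again a geometrically irreducible hypergeometric sheaf with no boundary contribution (the $!$- and $\star$-convolutions agree at each stage by the standard numerology — Euler characteristic $-1$, no repeated or common local characters — cf. \cite[8.2, 8.4.7]{Ka-ESDE}), it is pure of weight $0+\sum_{i=1}^{A-1}(1+1)=2(A-1)$. Tensoring by $\alpha^{\deg}$, pure of weight $-2(A-1)$, gives $w=2(A-1)-2(A-1)=0$.

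The point needing care is getting the weight \emph{exactly}, not just an upper bound: one needs $\sB_0$ pure of weight precisely $1$ (Deligne's purity for the exponential sum of a single polynomial) and the iterated $!$-convolution to stay pure (the $!$-versus-$\star$ comparison for hypergeometric sheaves). A route that sidesteps this bookkeeping is to sum the trace function over $s\in\G_m(K)$ directly: the identity $\sum_{s\in K^\times}\psi_K(-\prod_it_i/s)=-1$ makes the sum factor through the $t_i$, and each factor $\sum_{t\in K^\times}\chi_i(t)\sum_{x\in K}\psi_K(Bx-x^B/t)$ evaluates, after two changes of variable (replace $t$ by $-x^B/t$, then $x$ by $Bx$), to $\chi_i(-1)g(\overline{\chi_i})\,\overline{\chi_i}^{\,B}(B)\,g(\chi_i^B)$, a product of two nontrivial Gauss sums of absolute value exactly $\#K$ (here $\gcd(A,B)=1$ makes $\chi_i^B$ nontrivial). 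Hence $\bigl|\sum_{s\in\G_m(K)}\mathrm{tr}(\mathrm{Frob}_{s,K}\mid\sH(\psi,A\times B))\bigr|=(\#K)^{-(A-1)}(\#K)^{A-1}=1$ for all $K$. Because $\chi_c(\G_m/\overline{\F_p},\sH(\psi,A\times B))=-1$, so that $H^0_c=H^2_c=0$ and $H^1_c$ is one-dimensional and pure of the same weight $w$ as the sheaf, this forces $(\#K)^{w/2}=1$, i.e. $w=0$.
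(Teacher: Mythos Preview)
Your main argument is correct and is essentially the paper's proof, dressed slightly differently. The paper starts from the known weight $(A-1)(B-1)$ of $\sH yp(\psi,A\times B;\triv)$ and tracks the ``savings'': each replacement of $\sK l(\psi;\rho\neq\triv,\rho^B=\triv)$ (pure of weight $B-2$) by $\sB_0$ (pure of weight $1$) saves $B-3$, for a total of $(A-1)(B-3)$, and the factor $(\#K)^{-(A-1)}$ then removes the remaining $2(A-1)$. You instead compute the weight of the iterated $!$-convolution $\sL_{\psi(-1/x)}*_!(\sL_{\chi_1}\otimes\sB_0)*_!\cdots$ directly as $2(A-1)$ and cancel it against the twist. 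Same computation, different bookkeeping; your version is more explicit about why each convolution stays pure, which is fine.

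Your alternative route through $\sum_{s\in\G_m(K)}\mathrm{tr}(\mathrm{Frob}_{s,K})$ is appealing but the last step is not justified as written. You assert that $H^1_c$ is ``pure of the same weight $w$ as the sheaf''; Deligne only gives weight $\leq w+1$, and the relation between the weight of $H^1_c$ and $w$ depends on how the forget-supports map $H^1_c\to H^1$ behaves at the boundary. Here the single downstairs character is $\triv$, so $\sH^{I(\infty)}$ is one-dimensional, and the local contribution at $\infty$ enters the long exact sequence comparing $H^1_c$ and $H^1$; one cannot simply read off $w$ from $|\mathrm{tr}(\mathrm{Frob}\mid H^1_c)|=1$ without analysing that. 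If you want to salvage this approach, you would need an independent identification of the Frobenius eigenvalue on $H^1_c(\G_m,\sH yp)$ (e.g.\ via the Mellin transform formula of \cite[8.2.8]{Ka-ESDE}) and then compare with your Gauss-sum product to pin down the constant-field twist directly---but at that point you are no longer sidestepping the bookkeeping, you are doing more of it. Since your first argument already proves the lemma, I would drop the second or flag it as heuristic.
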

\begin{proof}The sheaf $\sH yp(\psi,A\times B;\triv)$ is pure of weight $(A-1)(B-1)$. In replacing each $\sK l(\psi,\mbox{all\ nontrivial\ characters\ of\  order\  dividing\ }B)$ by $\sB_0$, we save weight $(B-3)$ in each replacement, so all in all we save weight $(A-1)(B-3)$. The division by $(\#K)^{A-1}$ brings the weight down to zero.
\end{proof}

\begin{lem}\label{tracefield}The trace function of $\sH(\psi,A\times B)$ takes values in the field $\Q(\zeta_p)$.
\end{lem}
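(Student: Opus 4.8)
The plan is to read off the trace function from the explicit formula in Lemma~\ref{descent} and check that every quantity appearing in it lies in $\Q(\zeta_p)$. Recall that the trace function at $s \in K^\times$, for $K/\F_p(\zeta_A)$ finite, is
$$\biggl(\frac{-1}{\#K}\biggr)^{A-1}\sum_{(t_i)_i}\psi\Bigl(\tfrac{-\prod_i t_i}{s}\Bigr)\prod_i\chi_i(t_i)\sum_{(x_i)_i}\psi_K\Bigl(B\bigl(\textstyle\sum_i x_i\bigr)-\textstyle\sum_i x_i^B/t_i\Bigr),$$
where the $\chi_i$ run over the $A-1$ nontrivial characters of order dividing $A$. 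First I would note that the factor $(-1/\#K)^{A-1}$ is rational, and that all the additive-character values $\psi(\cdots)$ and $\psi_K(\cdots)$ are $p$-th roots of unity, hence lie in $\Q(\zeta_p)$. The only potentially problematic terms are the products $\prod_i\chi_i(t_i)$, whose values are $A$-th roots of unity.

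The key observation is that the $\chi_i$ are precisely the \emph{full} set of nontrivial characters of order dividing $A$, so the outer sum is symmetric under the action of $\Gal(\Q(\zeta_A)/\Q)$ on this set of characters. Concretely, for $\sigma \in \Gal(\Q(\zeta_A)/\Q)$, applying $\sigma$ to a term of the trace function replaces each $\chi_i$ by $\chi_i^{\sigma}$, i.e.\ by $\chi_i^k$ for the appropriate $k$ prime to $A$; but $\{\chi_1^k,\ldots,\chi_{A-1}^k\}$ is again the complete list of nontrivial characters of order dividing $A$, so after reindexing the summation variables $(t_i)_i$ the whole expression is unchanged. (The additive parts involve no $A$-th roots of unity, so $\sigma$ fixes them.) Therefore the trace function value is fixed by $\Gal(\Q(\zeta_A)/\Q)$, hence lies in $\Q(\zeta_p)$ rather than merely $\Q(\zeta_p,\zeta_A)$; combined with the first paragraph this gives the claim. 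One should also remark that a priori the trace function of a lisse $\Ql$-sheaf takes values in $\Ql$, but here the explicit formula exhibits it as an element of the subfield $\Q(\zeta_p,\zeta_A)$ (independently of the chosen embedding $\overline\Q \hookrightarrow \Ql$), so the Galois-invariance argument applies within $\Q(\zeta_p,\zeta_A)$.

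The main obstacle — really the only point requiring care — is making precise the reindexing step: one must check that permuting the characters $\chi_i \mapsto \chi_i^k$ can be absorbed by a permutation (indeed a relabeling) of the dummy variables $t_1,\ldots,t_{A-1}$ together with, if one prefers, the substitution $t_i \mapsto t_i$ (no change is needed on the $t_i$ themselves beyond relabeling, since $\prod_i\chi_i(t_i)$ only depends on the multiset of pairs $(\chi_i,t_i)$ and we are summing over all tuples $(t_i)_i \in \G_m(K)^{A-1}$). Since the sum is over the full product $\G_m(K)^{A-1}$ and the inner sum over $(x_i)_i$ is attached index-by-index to the same $t_i$, relabeling indices is harmless and the identity $\sum = \sigma(\sum)$ is immediate. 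No genuine difficulty arises; the lemma is essentially bookkeeping on the formula of Lemma~\ref{descent}.
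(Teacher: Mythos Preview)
Your argument is correct and follows essentially the same route as the paper's own proof: both recognize that everything in the trace formula except the factor $\prod_i\chi_i(t_i)$ is a symmetric function of $(t_1,\ldots,t_{A-1})$ with values in $\Q(\zeta_p)$, and that a Galois automorphism over $\Q(\zeta_p)$ merely permutes the $\chi_i$ among themselves, which can be undone by relabeling the dummy variables. The paper makes the symmetry explicit by factoring the inner sum as $\prod_i\bigl(\sum_{x\in K}\psi_K(Bx-x^B/t_i)\bigr)$, and it works directly with $\Gal(\overline{\Q}/\Q(\zeta_p))$ rather than $\Gal(\Q(\zeta_A)/\Q)$, which sidesteps the need for your final clarifying paragraph about lifting $\sigma$ to $\Q(\zeta_p,\zeta_A)$; but these are cosmetic differences.
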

\begin{proof}
In the formula for the trace, we write the final summation 
$$\sum_{(x_i)_i \in \A^1(K)^{A -1}}\psi_K\biggl(B(\sum_i x_i) -\sum_ix_i^B/t_i\biggr)$$
as 
$$\prod_{1 \le i \le A-1} \biggl(\sum_{x \in K}\psi_K(Bx-x^B/t_i)\biggr),$$
a symmetric function of the $t_i$. The factor $\psi(- (\prod_i t_i)/s)$ is also a symmetric function of the $t_i$.
So the formula for the
trace at $s \in K^\times$ has the shape
$$\sum_{(t_i )_i \in \G_m(K)^{A -1}}(\prod_i \chi_i(t_i))(\mbox{Symmetric function of }(t_1,\ldots  ,t_{A-1})).$$
If we precompose with an automorphism of $\G_m^{A -1}$ given by a permutation of the variables, this sum (indeed any
sum over $\G_m(K)^{A -1}$) does not change. But the effect of this on our sum is to correspondingly permute the $\chi_i$.
Thus in the formula for the trace, the sum does not change under any permutation of the $\chi_i$. When we apply an element
of $\Gal(\overline{\Q}/\Q(\zeta_p))$ to the sum, its only effect is to permute the $\chi_i$ (it permutes them among themselves because  they are all the nontrivial characters
of order dividing $A$, so as a set are Galois stable, even under $\Gal(\overline{\Q}/\Q)$), or equivalently to permute the variables. Thus our sum is invariant under $\Gal(\overline{\Q}/\Q(\zeta_p))$, so lies in $\Q(\zeta_p))$.
\end{proof}

\begin{lem}\label{det}If $p=2$, then $\sH(\psi,A\times B)$ has $\Q$-valued trace function, and we have inclusions
$$G_{geom} \subset \SO_{(A-1)(B-1)}(\overline{\Q}_{\ell}),\  \ G_{geom} \lhd G_{arith} \subset \GO_{(A-1)(B-1)}(\overline{\Q}_{\ell}).$$
If we pass to the quadratic extension of $\F_p(\zeta_A)$, then we have
$$G_{geom} \lhd G_{arith} \subset \SO_{(A-1)(B-1)}(\overline{\Q}_{\ell}).$$
\end{lem}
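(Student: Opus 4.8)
Combine the previous three lemmas. By Lemma~\ref{weight}, $\sH(\psi,A\times B)$ is pure of weight zero, so after the constant field twist it lands in some orthogonal or symplectic or linear group according to its autoduality; by Lemma~\ref{selfdual}, in characteristic $p=2$ the sheaf $\sH yp(\psi,A\times B;\triv)$ is orthogonally self dual, and this autoduality is inherited by the descent $\sH(\psi,A\times B)$ (a constant field twist of a constant field twist of $\sH yp$), so both $G_{geom}$ and $G_{arith}$ land in the full orthogonal group $\GO_{(A-1)(B-1)}(\overline{\Q}_\ell)$. By Lemma~\ref{det} (the earlier one, on the determinant), $\det\sH yp(\psi,A\times B;\triv)$ is geometrically trivial, hence so is $\det\sH(\psi,A\times B)$; thus $G_{geom}\subset\SO_{(A-1)(B-1)}(\overline{\Q}_\ell)$. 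The normality $G_{geom}\lhd G_{arith}$ is automatic from the structure of $\pi_1$.

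\begin{proof} By Lemma~\ref{weight}, $\sH(\psi,A\times B)$ is pure of weight zero, so $G_{arith}$ (hence also $G_{geom}$) lies in a group preserving a nondegenerate pairing of the appropriate type. By Lemma~\ref{selfdual}, when $p=2$ the sheaf $\sH yp(\psi,A\times B;\triv)$ is orthogonally self dual, and since $\sH(\psi,A\times B)$ is, by Lemma~\ref{descent}, a descent of a constant field twist of $\sH yp(\psi,A\times B;\triv)$, it too is orthogonally self dual over $\overline{\F_2}$. Hence $G_{geom} \subset \GO_{(A-1)(B-1)}(\overline{\Q}_{\ell})$, and as $G_{geom}$ is a normal subgroup of $G_{arith}$ (it is the image of the geometric fundamental group, which is normal in the arithmetic one), we get
$$G_{geom} \lhd G_{arith} \subset \GO_{(A-1)(B-1)}(\overline{\Q}_{\ell}).$$
By Lemma~\ref{det} (on the determinant), $\det \sH yp(\psi,A\times B;\triv)$ is geometrically trivial; a constant field twist changes the determinant only by a geometrically trivial character, and so does the descent, hence $\det \sH(\psi,A\times B)$ is geometrically trivial, giving $G_{geom} \subset \SO_{(A-1)(B-1)}(\overline{\Q}_{\ell})$.

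It remains to see that the trace function is $\Q$-valued when $p=2$, and that passing to the quadratic extension of $\F_p(\zeta_A)$ forces $G_{arith} \subset \SO_{(A-1)(B-1)}(\overline{\Q}_{\ell})$. For the first point, Lemma~\ref{tracefield} shows the trace takes values in $\Q(\zeta_p) = \Q(\zeta_2) = \Q$. For the second, the obstruction to $G_{arith}$ lying in $\SO$ rather than $\GO$ is exactly the determinant, which is an arithmetic character of $\pi_1(\G_m/\F_p(\zeta_A))$ of order dividing $2$ and trivial on $\pi_1^{geom}$; it therefore comes from a character of $\Gal(\overline{\F_p(\zeta_A)}/\F_p(\zeta_A))$ of order dividing $2$, i.e.\ is trivial after the (unique) quadratic constant field extension. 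Over that extension $\det \sH(\psi,A\times B)$ is trivial, so $G_{arith} \subset \SO_{(A-1)(B-1)}(\overline{\Q}_{\ell})$.
\end{proof}

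The only genuinely delicate point is bookkeeping of the constant field twists: one must check that neither the descent in Lemma~\ref{descent} nor the normalizing twist disturbs the \emph{geometric} autoduality or the \emph{geometric} triviality of the determinant --- both follow because a constant field twist alters a sheaf only by a character pulled back from $\Spec\F_q$, which is geometrically trivial. The passage-to-the-quadratic-extension statement is then just the observation that the finite-order determinant character dies on a finite constant field extension, and an order-$2$ character needs only the quadratic one.
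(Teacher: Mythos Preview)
Your proof has the right ingredients but a genuine gap in the step establishing $G_{arith} \subset \GO_{(A-1)(B-1)}(\overline{\Q}_\ell)$.

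Your opening sentence, ``pure of weight zero, so $G_{arith}$ \ldots\ lies in a group preserving a nondegenerate pairing,'' is false as written: purity of weight zero says only that Frobenius eigenvalues have absolute value one, not that the sheaf is self-dual. You then try to recover via ``$G_{geom} \subset \GO$ and $G_{geom} \lhd G_{arith}$, hence $G_{arith} \subset \GO$,'' but normality alone does not give this; in general the normalizer in $\GL(V)$ of a subgroup of $\GO(V)$ need not lie in $\GO(V)$. The missing input is geometric irreducibility: since $G_{geom}$ acts irreducibly, the $G_{geom}$-invariant form is unique up to scalar, and then any $g \in G_{arith}$ (which normalizes $G_{geom}$) must send the form to a scalar multiple of itself, forcing $g \in \GO$. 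You never invoke irreducibility here.

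The paper's argument is cleaner and avoids the normalizer detour. It uses the $\Q$-valued trace (which you only cite afterwards, for a different purpose) together with purity of weight zero to deduce \emph{arithmetic} self-duality directly: a weight-zero sheaf with real-valued trace function has the same trace as its dual, so by Chebotarev and geometric irreducibility it is arithmetically isomorphic to its dual. The autoduality being unique up to scalar (irreducibility again) and orthogonal geometrically, it is orthogonal, whence $G_{arith} \subset \GO$. This is the step you should have written; everything else in your proof (geometric determinant, the order-$2$ character dying over the quadratic extension) matches the paper.
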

\begin{proof}From Lemma \ref{selfdual}, we know that  $\sH(\psi,A\times B)$ is, geometrically, orthogonally self dual. From Lemma \ref{tracefield} and Lemma \ref{weight}, we know that $\sH(\psi,A\times B)$ is pure of weight zero and has $\Q$-valued traces. This implies that $\sH(\psi,A\times B)$ is arithmetically self dual. Because $\sH(\psi,A\times B)$ is geometrically (and hence arithmetically) irreducible, 
the autoduality of $\sH(\psi,A\times B)$ is unique up to a nonzero scalar factor. Being orthogonal geometrically, the autoduality of $\sH(\psi,A\times B)$ must be orthogonal. Thus both $G_{geom}$ and $G_{arith}$ lie in $\GO_{(A-1)(B-1)}(\overline{\Q}_{\ell})$. By Lemma \ref{det}, we then get that $G_{geom}$ lies in $\SO_{(A-1)(B-1)}(\overline{\Q}_{\ell})$. 

If $G_{arith}$ lies in $\SO$, we are done. If not, then the determinant of $\sH(\psi,A\times B)$ is geometrically trivial, but takes values in $\pm 1$, so must be the constant field twist $(-1)^{\deg}$, which disappears when we pass to the quadratic extension of  $\F_p(\zeta_A)$.
\end{proof}

\begin{lem}\label{detagain}If $p\neq 2$, then $\sH(\psi,A\times B)$ is not geometrically self dual, and  we have inclusions
$$G_{geom} \subset \SL_{(A-1)(B-1)}(\overline{\Q}_{\ell}),\  \ G_{geom} \lhd G_{arith} \subset \GL_{(A-1)(B-1)}(\overline{\Q}_{\ell}).$$
If we pass to the degree $2p$ extension of $\F_p(\zeta_A)$, then we have
$$G_{geom} \lhd G_{arith} \subset \SL_{(A-1)(B-1)}(\overline{\Q}_{\ell}).$$
\end{lem}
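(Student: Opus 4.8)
The plan is to mimic the structure of the proof of Lemma~\ref{det} (the $p=2$ case), replacing orthogonal considerations by the analysis of the determinant, since for $p \neq 2$ the sheaf is no longer self dual. First I would establish that $\sH(\psi,A\times B)$ is not geometrically self dual: by Lemma~\ref{selfdual}, $\sH yp(\psi,A\times B;\triv)$ is geometrically self dual precisely when $p=2$, and $\sH(\psi,A\times B)$ differs from it only by a constant field twist, which does not affect geometric self duality; hence for $p \neq 2$ it is not geometrically self dual. Consequently $G_{geom}$ lands in $\GL_{(A-1)(B-1)}(\overline{\Q}_{\ell})$ with no autoduality constraint.

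Next I would pin down the determinant. By Lemma~\ref{det} (the statement ``The determinant of $\sH yp(\psi,A\times B;\triv)$ is geometrically trivial''), the geometric determinant of the hypergeometric sheaf is trivial; the passage to $\sH(\psi,A\times B)$ is via a constant field twist and the division by $(\#K)^{A-1}$ seen in Lemma~\ref{descent} and Lemma~\ref{weight}, neither of which changes the \emph{geometric} determinant (a constant field twist is geometrically trivial, and the Tate twist only rescales by a root of unity power arithmetically). So $\det \sH(\psi,A\times B)$ is geometrically trivial, giving $G_{geom} \subset \SL_{(A-1)(B-1)}(\overline{\Q}_{\ell})$, and the normality $G_{geom} \lhd G_{arith}$ is the standard fact that geometric monodromy is normal in arithmetic monodromy.

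For the last assertion I would identify the arithmetic determinant as an explicit character of $\Gal(\overline{\F_p(\zeta_A)}/\F_p(\zeta_A))$ of finite order. Since $\det \sH(\psi,A\times B)$ is geometrically trivial, it is $\alpha^{\deg}$ for some $\alpha \in \overline{\Q}_{\ell}^{\times}$; because $\sH(\psi,A\times B)$ is pure of weight zero (Lemma~\ref{weight}), $|\alpha|=1$ at every place, and because its trace function lies in $\Q(\zeta_p)$ (Lemma~\ref{tracefield}), $\alpha$ lies in $\Q(\zeta_p)$ and is an algebraic number all of whose conjugates have absolute value $1$, hence a root of unity in $\Q(\zeta_p)$, so $\alpha$ is a root of unity of order dividing $2p$. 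Therefore $\det \sH(\psi,A\times B)$ becomes trivial after pulling back along the degree-$2p$ (or a divisor thereof) extension of $\F_p(\zeta_A)$, which forces $G_{arith} \subset \SL_{(A-1)(B-1)}(\overline{\Q}_{\ell})$ over that extension, while $G_{geom}$ is of course unchanged under a constant field extension.

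The main obstacle I anticipate is the bookkeeping for the order of the scalar $\alpha$: one must be careful that the constant-field twist and the Tate twist $((A-1)(B-1)/2)$ together contribute a scalar whose order divides $2p$ and not something larger. The weight-zero purity confines $\alpha$ to the roots of unity, the $\Q(\zeta_p)$-rationality of traces confines those roots of unity to $\mu_{2p}$ (the roots of unity in $\Q(\zeta_p)$ being exactly $\mu_{2p}$ when $p$ is odd), and this is exactly why the stated extension has degree $2p$; the potential subtlety is only whether the $p$-part of $\alpha$ genuinely occurs, but since we only need an \emph{inclusion} after passing to the degree-$2p$ extension, it suffices that the order divides $2p$, which the above argument delivers.
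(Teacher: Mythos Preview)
Your proposal is correct and follows essentially the same route as the paper: geometric triviality of the determinant from Lemma~\ref{det}, then weight-zero purity together with $\Q(\zeta_p)$-rationality of traces to pin the scalar $\alpha$ down to a root of unity of order dividing $2p$. The paper's proof is terser and leaves the non-self-duality implicit in Lemma~\ref{selfdual}; the one point it makes explicit that you gloss over is that $\alpha$ is a unit at all finite places away from $p$, which combined with the archimedean bound is what (via Kronecker) forces $\alpha$ to be a root of unity rather than merely an algebraic number of absolute value~$1$.
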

\begin{proof}From Lemma \ref{det}, we know that  $\sH(\psi,A\times B)$  has geometrically trivial determinant, which gives the first assertion. From Lemma \ref{tracefield} and Lemma \ref{weight}, we know that $\sH(\psi,A\times B)$ is pure of weight zero and has $\Q(\zeta_p)$-valued traces. Therefore its determinant is of the form $\alpha^{\deg}$, for some $\alpha \in \Q(\zeta_p)$ which is a unit outside of the unique place over $p$, and all of whose complex absolute values are $1$. Thus $\alpha$ is a root of unity in $ \Q(\zeta_p)$, so of order dividing $2p$. So after passing to the degree $2p$ extension of $\F_p(\zeta_4)$, the determinant becomes arithmetically trivial as well.
\end{proof}

\section{A second class of hypergeometric sheaves}
We remain in characterstic $p > 0$, with a chosen $\ell \neq p$, and a chosen nontrivial additive character $\psi$ of $\F_p$. Fix an  integer $A \ge 7$ which is prime to $p$. We denote by $\phi(A)$ the Euler $\phi$ function: 
$$\phi(A) := \#(\Z/A\Z)^\times ={\rm \ number\ of\ characters\ of \ order  \ }A.$$
We denote by
$$\sH yp(\psi,A^\times ;\triv)$$
 the hypergeometric sheaf whose ``upstairs" characters are the $\phi(A)$ characters of order $A$, and whose ``downstairs" character is the single character $\triv$. It is defined on $\G_m/\F_q$ for any finite extension of $\F_p$ containing the $A^{\mathrm {th}}$ roots of unity. One knows \cite[8.8.13]{Ka-ESDE} that $\sH yp(\psi,A^\times;\triv)$ is pure of weight $\phi(A) $, and geometrically irreducible. 

\begin{lem}\label{detbis} The determinant of $\sH yp(\psi,A^\times;\triv)$ is geometrically trivial.
\end{lem}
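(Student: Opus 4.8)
The plan is to argue exactly as for the geometric triviality of $\det\sH yp(\psi,A\times B;\triv)$. The strategy has two halves. First I would show that the geometric determinant of $\sH yp(\psi,A^\times;\triv)$ is a \emph{tame} rank-one lisse sheaf on $\G_m/\overline{\F_p}$; such a sheaf is a Kummer sheaf, determined by its local monodromy at $0$, which here is $\oplus_i\chi_i$, so the determinant must be geometrically $\sL_{\prod_i\chi_i}$. Then I would compute that $\prod_i\chi_i=\triv$, the $\chi_i$ running over the $\phi(A)$ characters of order exactly $A$.

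For the tameness step, consider the $I(\infty)$-representation of $\sH yp(\psi,A^\times;\triv)$: it is the direct sum of the single downstairs character $\triv$ with a totally wild part $\Wild$ of rank $\phi(A)-1$ and Swan conductor $1$, hence with all $\infty$-breaks equal to $1/(\phi(A)-1)$. Since $A\ge 7$ forces $\phi(A)\ge 4$, this common break is $\le 1/3<1$; as $\det(\Wild)$ is one-dimensional its Swan conductor is a non-negative integer bounded by the largest break of $\Wild$, hence is $0$, so $\det(\Wild)$ is tame. Therefore $\det\sH yp(\psi,A^\times;\triv)$ is tame at $\infty$; it is automatically tame at $0$, since the local monodromy there is a sum of finite-order Kummer characters; and it is lisse on $\G_m$. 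This yields that the geometric determinant is $\sL_{\prod_i\chi_i}$.

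For the product step, I would observe that the set of the $\phi(A)$ characters of order exactly $A$ is stable under inversion $\chi\mapsto\chi^{-1}$, and that, since $A\ge 3$, no character of order $A$ satisfies $\chi=\chi^{-1}$; hence the set splits into pairs $\{\chi,\chi^{-1}\}$, and the product over each pair, and so the total product, is $\triv$. (Equivalently: fixing a character $\chi_0$ of order $A$, the order-$A$ characters are the $\chi_0^k$ with $\gcd(k,A)=1$, and $\prod_k\chi_0^k=\chi_0^{\sum k}=\chi_0^{A\phi(A)/2}=\triv$ since $\phi(A)$ is even for $A\ge 3$.) Combining the two steps, $\det\sH yp(\psi,A^\times;\triv)\cong\sL_{\triv}$ geometrically, which is the claim.

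I do not anticipate a genuine obstacle: the only points requiring a moment's care are the numerical inequality $\phi(A)\ge 3$ (so that the wild part has rank $\ge 2$ and hence all slopes $<1$, forcing $\det(\Wild)$ to be tame), which holds because $A\ge 7$, and the elementary remark that no order-$A$ character is self-inverse when $A\ge 3$.
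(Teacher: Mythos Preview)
Your proposal is correct and follows essentially the same argument as the paper: first use $A\ge 7\Rightarrow\phi(A)\ge 4$ so that $\Wild$ has rank $\ge 3$ and all breaks $<1$, forcing $\det(\Wild)$ and hence $\det(\sH yp)$ to be tame and equal to $\sL_{\prod_i\chi_i}$; then observe that the characters of exact order $A$ are closed under inversion with no fixed points (equivalently, none equals $\chi_2$ since $A>2$), so their product is trivial. The paper's proof is terser but the ideas are identical.
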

\begin{proof}Because  $A \ge 7$, the rank $\phi(A)$ is $\ge 4$. Hence the wild part $\Wild$ of the $I(\infty)$ representation has dimension $ \ge 3 >1$, so all slopes $<1$, and hence $\det(\Wild)$ must be tame. Therefore $\det(\sH yp)$ is tame,
and must be equal to the product of its $\phi(A)$ ``upstairs" characters. Their product must be trivial, because they are stable by inversion and  (because $A > 2$) none of them is $\chi_2$.
\end{proof}

We now explain the criterion for finite monodromy in terms of Kubert's $V$ function. For simplicity, we will state it only in the case when $A$ is divisible by precisely two distinct primes $p_1$ and $p_2$. Denote  by $\Phi_N(X) \in \Z[X]$ the cyclotomic polynomial for the primitive $N^{\mathrm {th}}$ roots of unity. Then
$$\Phi_A(X) =\frac{(X^A-1)(X^{A/(p_1p_2)}-1)}{(X^{A/p_1}-1)(X^{A/p_2}-1)}.$$

\begin{lem}\label{finitemonocritbis}Let $\F_q$ be a finite extension of $\F_p$ containing the $A^{\mathrm {th}}$ roots of unity. Suppose that $A$ is divisible by precisely two distinct primes $p_1$ and $p_2$.
 Then the Tate twist
$$\sH yp(\psi,A^\times;\triv)(\phi(A)/2)$$
has finite geometric and arithmetic monodromy groups if and only if, for all $x \in (\Q/\Z)_{\mathrm{prime\  to\ }p}$, we have
$$V(Ax) + V(Ax/(p_1p_2)) + V(-x) \ge V(Ax/p_1) +V(Ax/p_2).$$
\end{lem}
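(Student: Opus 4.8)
The plan is to mirror the proof of Lemma~\ref{finitemonocrit}, which itself follows \cite[Lemma 2.1]{Ka-RL-T-Co2}: the finite monodromy criterion is detected on the trace function, and the trace function of the Tate-twisted hypergeometric is, via \cite[8.2.8]{Ka-ESDE}, a ratio of Gauss sums whose $p$-adic valuation at every finite place is controlled by Stickelberger's theorem, i.e. by Kubert's $V$ function. So first I would write out the Mellin transform formula for the trace of $\sH yp(\psi,A^\times;\triv)$ at a character: up to elementary factors it is $\prod_{\chi:\ \chi^A=\triv,\ \chi\neq\triv}g(\psi,\chi\Lambda)$ divided by $g(\psi,\Lambda)$ (the single downstairs $\triv$ contributing the denominator), where $\Lambda$ runs over the relevant characters and $g$ denotes a Gauss sum. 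After the Tate twist by $\phi(A)/2$ the archimedean absolute values are all $1$, so finiteness of $G_{\mathrm{geom}}$ and $G_{\mathrm{arith}}$ is equivalent to integrality: all these ratios of Gauss sums must be algebraic integers (equivalently, roots of unity, once one also checks the places above $p$ are harmless, exactly as in the cited lemmas).

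Second, I would convert integrality into the displayed inequality. The key point is the product expression $\Phi_A(X)=\frac{(X^A-1)(X^{A/(p_1p_2)}-1)}{(X^{A/p_1}-1)(X^{A/p_2}-1)}$ stated just before the lemma: the multiset of characters of order exactly $A$ is, in the group ring of $(\Q/\Z)_{\mathrm{prime\ to\ }p}$, the "divisor" $[\tfrac1A]+[\tfrac1{A/(p_1p_2)}]$ minus $[\tfrac1{A/p_1}]-[\tfrac1{A/p_2}]$ pushed forward appropriately — concretely, summing $V$ over the order-$A$ characters twisted by $x$ telescopes into $V(Ax)+V(Ax/(p_1p_2))-V(Ax/p_1)-V(Ax/p_2)$, because $V$ is (by its defining recursions) additive over such cyclotomic identities and $V$ of a character of order dividing $N$ evaluated appropriately collapses. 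Meanwhile Stickelberger gives that the $p$-adic ($\mathfrak p$-adic, any place $\mathfrak p$ over a rational prime $\ell\neq p$) valuation of $g(\psi,\rho)$ is $V(\cdot)$ evaluated at the corresponding fraction, the denominator Gauss sum $g(\psi,\Lambda)$ contributes $-V(x)$ with a sign, and the extra $+1$ (or the $V(x)+V(-x)$ in the first display) comes from the standard $g(\psi,\rho)g(\psi,\bar\rho)=\pm\#\F_q$ relation which has valuation $1$. Requiring the total valuation to be $\ge 0$ for every $x$ is exactly the asserted inequality.

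Third, I would note that one must check there is \emph{no} contribution obstructing integrality at the prime $p$ itself and that the weight-zero normalization is the right one, but both are handled verbatim as in Lemma~\ref{finitemonocrit} and \cite[Lemma 2.1]{Ka-RL-T-Co2}: the Tate twist by $\phi(A)/2$ is precisely the half-integral twist making the product of Gauss sums a weight-zero object, the Hasse--Davenport relation lets one pass between $\F_q$ and its extensions so the criterion need only be tested on $(\Q/\Z)_{\mathrm{prime\ to\ }p}$, and purity plus the Weil bounds reduce finiteness of the monodromy to the integrality of trace values at all Frobenii, hence to Stickelberger. So the proof is genuinely "entirely similar": invoke \cite[8.2.8]{Ka-ESDE} for the trace, Hasse--Davenport to simplify, Stickelberger to rewrite $p$-adic valuations as values of $V$, and the cyclotomic identity for $\Phi_A$ to collapse the sum over order-$A$ characters.

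The main obstacle, and the only place genuine care is needed, is the second step: correctly bookkeeping which multiset of characters appears "upstairs" (all $\phi(A)$ characters of order exactly $A$, not order dividing $A$) and translating the cyclotomic polynomial identity into the telescoping of $V$-values. One has to be sure that summing $V(x+\chi)$ over $\chi$ of order exactly $A$ really does equal $V(Ax/1)+V(Ax/(p_1p_2))-V(Ax/p_1)-V(Ax/p_2)$ up to the explicit boundary terms — this uses the distribution/multiplication formula for $V$ (the analogue of $\sum_{\zeta^N=1}V(x+\zeta\cdot\text{-})$) and the restriction to $A$ having exactly two prime factors is exactly what makes $\Phi_A$ have the clean four-term shape, so the inequality has only two terms on each side. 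Everything else is routine once this dictionary is set up.
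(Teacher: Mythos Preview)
Your approach is correct and is exactly the paper's: the paper's proof is the single sentence ``Entirely similar to the proof of \cite[Lemma 2.1]{Ka-RL-T-Co2}, using the Hasse--Davenport relation to simplify the Mellin transform calculation \cite[8.2.8]{Ka-ESDE},'' and you have unpacked precisely what that entails, including the role of the cyclotomic identity for $\Phi_A$ in collapsing the product over characters of exact order $A$ into the four $V$-terms.

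One slip to fix: your parenthetical ``any place $\mathfrak p$ over a rational prime $\ell\neq p$'' is backwards. Gauss sums over $\F_q$ are units at every place not lying over $p$, so integrality of the Tate-twisted trace is automatic there; Stickelberger's theorem and Kubert's $V$ function compute the valuation at places \emph{above} $p$, and that is the only place the inequality needs to be checked. Your later remark about ``no contribution obstructing integrality at the prime $p$ itself'' should accordingly be reversed: the entire content of the criterion lives at $p$.
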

\begin{proof}Entirely similar to the proof of \cite[Lemma 2.1]{Ka-RL-T-Co2}, using the Hasse-Davenport relation to simplify
the Mellin transform calculation \cite[8.2.8]{Ka-ESDE} of the trace function of $\sH yp(\psi,A^\times;\triv)(\phi(A)/2)$. 
\end{proof}

We now specialize further.

\begin{lem}\label{descentbis}Suppose that the characteristic $p$ is odd, and that $A=4B$ with $B$ an odd prime, $B \neq p$.
The lisse sheaf $\sH(\psi,(4B)^\times)$ on $\Gm/\F_p(\zeta_4)$ whose trace function at a point $s \in K^\times$, $K/\F_p(\zeta_4)$ a finite extension, is given by
$$s \mapsto \biggl(\frac{-1}{\#K}\biggr)^2\sum_{(u,v) \in \G_m(K)^2}\psi\biggl(\frac{-uv}{s}\biggr)\chi_4(u)\overline{\chi_4}(v)
   \sum_{(x,y) \in \A^1(K)^{2}}\psi_K\biggl(B(x+y) -\frac{x^B}{u} -\frac{y^B}{v}\biggr)$$
is a descent to $\G_m/\F_p(\zeta_4)$ of a constant field twist of $\sH yp(\psi,A^\times;\triv)$.

\end{lem}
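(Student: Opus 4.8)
The plan is to follow the proof of Lemma \ref{descent} essentially verbatim; the only structural change is that the $A-1$ character packets used there are replaced by exactly two packets. First I would record the relevant factorization of characters. Since $A = 4B$ with $B$ an odd prime $\neq p$ and $\gcd(4,B) = 1$, a multiplicative character of $\F_q^\times$ has order exactly $4B$ if and only if it is uniquely of the form $\chi\rho$ with $\chi$ of order exactly $4$ and $\rho$ of order exactly $B$; here $\chi \in \{\chi_4,\overline{\chi_4}\}$, and, because $B$ is prime, $\rho$ ranges over the $B-1$ characters $\neq \triv$ with $\rho^B = \triv$. (This uses the hypotheses that $p$ is odd and $B \neq p$, so that $\chi_4$ and the order-$B$ characters exist over $\overline{\F_p}$.) Hence the $\phi(4B) = 2(B-1)$ ``upstairs'' characters of $\sH yp(\psi,A^\times;\triv)$ split into the two packets $\chi_4 \times \{\rho \neq \triv : \rho^B = \triv\}$ and $\overline{\chi_4} \times \{\rho \neq \triv : \rho^B = \triv\}$.

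Next I would observe, exactly as in Lemma \ref{descent}, that each of these two packets is the list of ``upstairs'' characters of $\sL_{\chi}\otimes \sK l(\psi, \mbox{all nontrivial characters of order dividing }B)$, with $\chi = \chi_4$, respectively $\chi = \overline{\chi_4}$. Writing $\sK l_B := \sK l(\psi, \mbox{all nontrivial characters of order dividing }B)$, it then follows from the definition of the hypergeometric sheaf as an iterated multiplicative $\star_!$-convolution that
$$\sH yp(\psi,A^\times;\triv) \;\cong\; \sL_{\psi(-1/x)}\star_! \bigl(\sL_{\chi_4}\otimes \sK l_B\bigr)\star_! \bigl(\sL_{\overline{\chi_4}}\otimes \sK l_B\bigr)$$
geometrically on $\G_m$. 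I would then substitute, for each of the two Kloosterman factors, the explicit descent over $\F_p$ to a constant field twist of the local system $\sB_0$ supplied by \cite[Lemma 1.2]{Ka-RL-T-Co2} (the same $\sB_0$ as in the proof of Lemma \ref{descent}), whose trace function at $t$ is $-\sum_{x}\psi_K(Bx - x^B/t)$. Since $\chi_4$ is defined over $\F_p(\zeta_4)$, this produces a constant field twist of $\sH yp(\psi,A^\times;\triv)$ defined on $\G_m/\F_p(\zeta_4)$.

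Finally I would compute the trace function of the triple convolution $\sL_{\psi(-1/x)}\star_! (\sL_{\chi_4}\otimes\sB_0)\star_!(\sL_{\overline{\chi_4}}\otimes\sB_0)$ at $s \in K^\times$: the two middle factors contribute convolution variables $u,v \in \G_m(K)$ with Kummer weights $\chi_4(u)\overline{\chi_4}(v)$ and inner exponential sums $-\sum_{x}\psi_K(Bx - x^B/u)$ and $-\sum_{y}\psi_K(By - y^B/v)$, while the factor $\sL_{\psi(-1/x)}$ forces the product of the convolution variables to equal $s$ and contributes $\psi(-uv/s)$; collecting the normalizing constant $(-1/\#K)^2$ and the overall sign exactly as in Lemma \ref{descent} yields the displayed formula. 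I do not expect a genuine obstacle here; the only point demanding care is the bookkeeping of the overall sign and of the power of $\#K$, but this depends only on the number of convolution factors (two copies of $\sB_0$) and not on which characters occur, so it comes out just as in Lemma \ref{descent}. As a safeguard, one could instead obtain the formula by specializing the computation in Lemma \ref{descent}, with its ``$A\times B$'' packet structure replaced by the present $\{\chi_4,\overline{\chi_4}\}$-indexed one.
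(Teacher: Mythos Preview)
Your proposal is correct and follows exactly the approach the paper intends: the paper's own proof is simply ``Entirely similar to the proof of Lemma \ref{descent},'' and you have correctly spelled out that analogy, replacing the $A-1$ packets $\chi_i\times\{\rho\neq\triv:\rho^B=\triv\}$ of Lemma \ref{descent} by the two packets indexed by $\chi_4,\overline{\chi_4}$, and then convolving the two resulting $\sL_\chi\otimes\sB_0$ factors with $\sL_{\psi(-1/x)}$. The factorization of characters of exact order $4B$ and the bookkeeping of the normalizing constant $(-1/\#K)^2$ are handled correctly.
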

\begin{proof}Entirely similar to the proof of Lemma \ref{descent}.
\end{proof}

\begin{lem}\label{weightbis}The lisse sheaf $\sH(\psi,(4B)^\times)$ is pure of weight zero.
\end{lem}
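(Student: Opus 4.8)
The plan is to reproduce the weight bookkeeping of the proof of Lemma~\ref{weight}, adapted to the present numerology. By \cite[8.8.13]{Ka-ESDE}, the hypergeometric sheaf $\sH yp(\psi,A^\times;\triv)$ with $A=4B$ is pure of weight $\phi(4B)=\phi(4)\phi(B)=2(B-1)$, and by Lemma~\ref{descentbis} the sheaf $\sH(\psi,(4B)^\times)$ is the descent to $\Gm/\F_p(\zeta_4)$ of a constant field twist of this sheaf. So it suffices to track how the successive modifications relating that trace function to the one in Lemma~\ref{descentbis} affect the weight.

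First I would decompose the $2(B-1)$ ``upstairs'' characters, namely the characters of order exactly $4B$, into the two packets $\chi_4\times(\text{all nontrivial }\rho\text{ with }\rho^B=\triv)$ and $\overline{\chi_4}\times(\text{all nontrivial }\rho\text{ with }\rho^B=\triv)$, as in the proof of Lemma~\ref{descentbis}; each packet is the character list of $\sL_{\chi_4}\otimes\sK l(\psi,\text{all nontrivial characters of order dividing }B)$, respectively of $\sL_{\overline{\chi_4}}\otimes\sK l(\psi,\text{all nontrivial characters of order dividing }B)$, and the Kloosterman sheaf in question has rank $B-1$ and is pure of weight $B-2$. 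Then, invoking \cite[Lemma~1.2]{Ka-RL-T-Co2} exactly as in the proof of Lemma~\ref{descent}, I would replace each of these two Kloosterman sheaves inside the convolution by the local system $\sB_0$ on $\Gm/\F_p$ with trace function $t\mapsto-\sum_{x\in K}\psi_K(Bx-x^B/t)$; each such replacement lowers the weight by $B-3$. After the two replacements the sheaf is pure of weight $2(B-1)-2(B-3)=4$. Finally, the normalizing factor $\bigl(\frac{-1}{\#K}\bigr)^2$ in the trace formula of Lemma~\ref{descentbis} is a Tate twist by $2$, which subtracts $4$ from the weight, so $\sH(\psi,(4B)^\times)$ is pure of weight $0$.

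I do not anticipate any substantive obstacle: this is essentially a line-by-line transcription of the proof of Lemma~\ref{weight}, with the $A-1$ replacements by $\sB_0$ there replaced by the $2$ replacements corresponding to $\chi_4$ and $\overline{\chi_4}$, and the division by $(\#K)^{A-1}$ replaced by the division by $(\#K)^2$. The only non-bookkeeping input is the purity of $\sB_0$ (equivalently, that it is a constant field twist of a multiplicative translate of the Kloosterman sheaf, saving weight $B-3$), which is the content of \cite[Lemma~1.2]{Ka-RL-T-Co2} and was already used for Lemmas~\ref{descent} and~\ref{weight}. Accordingly the written proof can simply say ``entirely similar to the proof of Lemma~\ref{weight}''.
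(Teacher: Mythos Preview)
Your proposal is correct and is precisely the argument the paper intends: the paper's own proof reads in its entirety ``Entirely similar to the proof of Lemma~\ref{weight}'', and your bookkeeping (start at weight $\phi(4B)=2(B-1)$, two replacements of the Kloosterman sheaf by $\sB_0$ each saving weight $B-3$, then the factor $(-1/\#K)^2$ subtracting $4$) is exactly the unwinding of that reference.
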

\begin{proof}Entirely similar to the proof of Lemma \ref{weight}.
\end{proof}

\begin{lem}\label{tracefieldbis}The trace function of  $\sH(\psi,(4B)^\times)$ takes values in the field $\Q(\zeta_p)$.
\end{lem}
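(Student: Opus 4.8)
The plan is to copy the argument of Lemma~\ref{tracefield} essentially verbatim, adapting it to the new descent $\sH(\psi,(4B)^\times)$ from Lemma~\ref{descentbis}. First I would observe that, exactly as in the proof of Lemma~\ref{tracefield}, the inner sum over $(x,y)\in\A^1(K)^2$ factors as a product
$$\biggl(\sum_{x\in K}\psi_K(Bx-x^B/u)\biggr)\biggl(\sum_{y\in K}\psi_K(By-y^B/v)\biggr),$$
so that the whole trace at $s\in K^\times$ has the shape
$$\biggl(\frac{-1}{\#K}\biggr)^2\sum_{(u,v)\in\G_m(K)^2}\chi_4(u)\,\overline{\chi_4}(v)\cdot\bigl(\text{symmetric function of }(u,v)\bigr),$$
since both the factor $\psi(-uv/s)$ and the product of the two $x$- and $y$-sums are symmetric under interchanging $u$ and $v$.

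Next I would note that the pair of multiplicative characters $\{\chi_4,\overline{\chi_4}\}$ is stable under $\Gal(\overline{\Q}/\Q)$: the only subtlety relative to Lemma~\ref{tracefield} is that here the two characters are complex conjugates of each other rather than forming a full Galois-stable packet of all nontrivial characters of order dividing $A$, but $\{\chi_4,\overline{\chi_4\}}$ is the full set of characters of exact order $4$, hence Galois stable. Therefore any $\gamma\in\Gal(\overline{\Q}/\Q(\zeta_p))$ acts on the trace sum only by permuting $\{u\mapsto\chi_4(u),\ v\mapsto\overline{\chi_4}(v)\}$ among themselves, i.e.\ either fixing the pair $(\chi_4,\overline{\chi_4})$ or swapping the two slots. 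Swapping the slots is the same as precomposing with the coordinate transposition $(u,v)\mapsto(v,u)$ of $\G_m^2$, which leaves the sum over $\G_m(K)^2$ unchanged because the rest of the summand is symmetric. Hence the trace value is fixed by $\Gal(\overline{\Q}/\Q(\zeta_p))$ and so lies in $\Q(\zeta_p)$.

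I do not anticipate a genuine obstacle here; the only point requiring a line of care is the Galois action on $\chi_4$. Unlike the situation in Lemma~\ref{tracefield}, where the relevant characters ranged over \emph{all} nontrivial characters of order dividing $A$ (a manifestly $\Gal(\overline{\Q}/\Q)$-stable set), here one uses that $\chi_4$ and $\overline{\chi_4}$ are interchanged by complex conjugation and fixed (up to that swap) by the rest of $\Gal(\overline{\Q}/\Q)$, together with the fact that any Galois automorphism sending $\chi_4\mapsto\overline{\chi_4}$ simultaneously sends $\overline{\chi_4}\mapsto\chi_4$, so it really is the transposition of the two slots and nothing worse. Once that is in place, the symmetry argument is identical to the one already given, and the proof can simply be written ``Entirely similar to the proof of Lemma~\ref{tracefield}, noting that $\{\chi_4,\overline{\chi_4}\}$ is stable under $\Gal(\overline{\Q}/\Q)$.''
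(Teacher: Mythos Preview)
Your proposal is correct and is exactly the approach the paper has in mind: the paper's proof is the single line ``Entirely similar to the proof of Lemma~\ref{tracefield},'' and you have faithfully spelled out that analogy, including the one point needing a word of justification, namely that $\{\chi_4,\overline{\chi_4}\}$ is the full set of characters of exact order $4$ and hence is stable under $\Gal(\overline{\Q}/\Q)$.
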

\begin{proof}Entirely similar to the proof of Lemma \ref{tracefield}.
\end{proof}

\begin{lem}\label{detter}For $\sH(\psi,(4B)^\times)$ on $\Gm/\F_p(\zeta_4)$, we have
$$G_{geom} \subset \SL_{2(B-1)}(\overline{\Q}_{\ell}).$$
After passing to the degree $2p$ extension of $\F_p(\zeta_4)$, we have
$$G_{geom} \lhd G_{arith} \subset \SL_{2(B-1)}(\overline{\Q}_{\ell}).$$
\end{lem}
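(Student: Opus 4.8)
The plan is to run the argument of Lemma~\ref{detagain} essentially verbatim, since every ingredient used there has an exact analogue here. First I would record the numerics: for $A=4B$ the ``upstairs'' characters of $\sH yp(\psi,A^\times;\triv)$ are the $\phi(4B)=\phi(4)\phi(B)=2(B-1)$ characters of order exactly $4B$, so $\sH(\psi,(4B)^\times)$ has rank $2(B-1)$, and it is geometrically irreducible by \cite[8.8.13]{Ka-ESDE} together with Lemma~\ref{descentbis}.

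For the geometric inclusion, Lemma~\ref{descentbis} exhibits $\sH(\psi,(4B)^\times)$ as a constant field twist of $\sH yp(\psi,A^\times;\triv)$, whose determinant is geometrically trivial by Lemma~\ref{detbis}. A constant field twist alters the determinant only by a geometrically constant rank-one sheaf, so $\det\sH(\psi,(4B)^\times)$ is still geometrically trivial, i.e. $G_{geom}\subset\SL_{2(B-1)}(\overline{\Q}_\ell)$. The normality $G_{geom}\lhd G_{arith}$ is automatic for any lisse sheaf.

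For the arithmetic refinement: by Lemma~\ref{weightbis} and Lemma~\ref{tracefieldbis}, $\sH(\psi,(4B)^\times)$ is pure of weight $0$ with trace function valued in $\Q(\zeta_p)$; hence its (geometrically trivial, arithmetically lisse) determinant has the form $\alpha^{\deg}$ for a single scalar $\alpha\in\Q(\zeta_p)$, which, exactly as in the proof of Lemma~\ref{detagain}, is a unit away from the unique place over $p$ and has all its archimedean absolute values equal to $1$. Therefore $\alpha$ is a root of unity of $\Q(\zeta_p)$, so of order dividing $2p$. Pulling back along the degree $2p$ constant field extension of $\F_p(\zeta_4)$ replaces $\alpha^{\deg}$ by the trivial character, so that $G_{arith}\subset\SL_{2(B-1)}(\overline{\Q}_\ell)$ as well.

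I do not expect any genuine obstacle here; it is essentially bookkeeping, transported from Lemma~\ref{detagain}. The two points to be careful about are the rank count $\phi(4B)=2(B-1)$ (not $\phi(B)=B-1$), and the fact that the base field $\F_p(\zeta_4)$ already contains $\zeta_4$, so no auxiliary quadratic twist intervenes and the single degree $2p$ constant field extension already suffices to trivialize the arithmetic determinant.
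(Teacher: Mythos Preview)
Your proposal is correct and follows essentially the same route as the paper's own proof: geometric triviality of the determinant from Lemma~\ref{detbis}, then the weight-zero and $\Q(\zeta_p)$-trace constraints from Lemmas~\ref{weightbis} and~\ref{tracefieldbis} force the arithmetic determinant to be $\alpha^{\deg}$ with $\alpha$ a root of unity in $\Q(\zeta_p)$, hence of order dividing $2p$. The extra remarks you include (the rank computation $\phi(4B)=2(B-1)$ and the observation that a constant field twist only alters the determinant by a geometrically constant factor) are helpful elaborations but do not diverge from the paper's argument.
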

\begin{proof}The first assertion is just Lemma \ref{detbis}, that $\det(\sH(\psi,(4B)^\times))$ is geometrically constant. In view of Lemmas \ref{tracefieldbis} and \ref{weightbis}, $\det(\sH(\psi,(4B)^\times))$ is of the form $\alpha^{\deg}$ for some $\alpha \in \Q(\zeta_p)$ which is a unit outside of the unique place over $p$, and all of whose complex absolute values are $1$. Thus $\alpha$ is a root of unity in $ \Q(\zeta_p)$, so of order dividing $2p$. So after passing to the degree $2p$ extension of $\F_p(\zeta_4)$, the determinant becomes arithmetically trivial as well.
\end{proof}

\section{Theorems of finite monodromy}
\begin{thm}\label{3 x 13} In characteristic $p=2$, the lisse sheaf $\sH(\psi,3 \times 13)$ on $\G_m/\F_4$ has finite $G_{arith}$ and finite $G_{geom}$.
\end{thm}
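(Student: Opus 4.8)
The plan is to deduce Theorem \ref{3 x 13} from the Kubert-function criterion of Lemma \ref{finitemonocrit}. By Lemma \ref{descent} together with Lemma \ref{weight}, the weight-zero sheaf $\sH(\psi,3\times 13)$ on $\G_m/\F_4=\G_m/\F_2(\zeta_3)$ is, up to a constant field twist by a root of unity, a descent of the Tate-twisted hypergeometric sheaf $\sH yp(\psi,3\times 13;\triv)(12)$, where $12=(A-1)(B-1)/2$ for $(A,B)=(3,13)$. Since finiteness of the monodromy groups is insensitive both to such a twist and to finite extension of the ground field, $\sH(\psi,3\times 13)$ has finite $G_{arith}$ and finite $G_{geom}$ if and only if $\sH yp(\psi,3\times 13;\triv)(12)$ does over $\G_m/\F_{2^{12}}$. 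By Lemma \ref{finitemonocrit} (with $A=3$, $B=13$, $AB=39$, so that the rank is $(A-1)(B-1)=24$) the theorem is therefore equivalent to the elementary assertion that
$$V(39x)+1\ \ge\ V(3x)+V(13x)\qquad\text{for all nonzero }x\in(\Q/\Z)_{\text{prime to }2}.$$
Proving this inequality is the whole content of the argument.

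To make it concrete I would use the binary description of $V$ in characteristic $2$: if $y\ne 0$ has odd denominator $d$ and $f$ is any multiple of the multiplicative order of $2$ in $(\Z/d\Z)^{\times}$, and one writes $y\equiv a/(2^f-1)\pmod 1$ with $0\le a<2^f-1$, then $V(y)$ equals the number $s(a)$ of $1$'s in the $f$-digit binary expansion of $a$, divided by $f$. In particular $0\le V(y)<1$, with $V(y)=0$ only for $y=0$, and $V(y)+V(-y)=1$ for $y\ne 0$. Applying $V(-y)=1-V(y)$ to $39x$, $3x$ and $13x$ shows that the inequality above, required for all $x$, is equivalent to $V(39z)\le V(3z)+V(13z)$ for all $z$. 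Fixing $z\ne 0$, letting $f$ be the order of $2$ modulo $\mathrm{lcm}(d,39)$ with $d$ the denominator of $z$, and writing $z=a/(2^f-1)$, this becomes $s(3a)+s(13a)\le s(39a)+f$, all multiples taken modulo $2^f-1$. Because $39=3\cdot 13=2^0+2^1+2^2+2^5$, $13=2^0+2^2+2^3$ and $3=2^0+2^1$, forming $3a$, $13a$ and $39a$ modulo $2^f-1$ amounts to adding cyclic shifts of $a$ at the bit positions $\{0,1\}$, $\{0,2,3\}$ and $\{0,1,2,5\}$, with end-around carries; writing $c_3,c_{13},c_{39}\ge 0$ for the numbers of carries in the three additions one has $s(3a)=2s(a)-c_3$, $s(13a)=3s(a)-c_{13}$, $s(39a)=4s(a)-c_{39}$, so the target inequality reduces to
$$s(a)+c_{39}\ \le\ f+c_3+c_{13}.$$

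The main obstacle is to establish this digit-sum inequality uniformly in $f$; it is genuinely special to the triple $(3,13,2)$, since for a generic triple $(A,B,p)$ the analogous inequality fails — which is exactly why the corresponding sheaf then has infinite monodromy. I would handle it in one of two ways. The direct route is a finite case analysis: since $s(a)\le f-1$, it suffices to compare the carry counts, and only the bit positions $0,1,2,3,5$ of $a$ ever interact in forming $3a$, $13a$, $39a$, so the relation between $c_3+c_{13}$ and $c_{39}$ is governed by finitely many local patterns of consecutive bits of $a$, which one enumerates. The alternative, closest in spirit to the companion papers, is to first reduce the verification of the $V$-inequality to a finite list of $x$ by the rank/field-of-definition argument used in the proof of \cite[Lemma 2.1]{Ka-RL-T-Co2} (the hypergeometric sheaf has rank $24$ and all its tame characters have order dividing $39$), and then to dispatch that finite list of denominators by explicit computation. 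Once the inequality is verified, Lemma \ref{finitemonocrit} immediately yields that both $G_{arith}$ and $G_{geom}$ are finite, which is the theorem.
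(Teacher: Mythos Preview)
Your reduction to the Kubert inequality $V(39x)+1\ge V(3x)+V(13x)$ is exactly how the paper begins, and your reformulation in terms of binary digit sums and carry counts is correct (modulo a harmless wobble: you state the reversed form $V(39z)\le V(3z)+V(13z)$ and then translate the original form, but the two are indeed equivalent for $z\notin\tfrac{1}{39}\Z$). The gap is that you do not actually prove the inequality; you only gesture at two strategies, and neither is carried out.

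Your first strategy (``finitely many local bit patterns'') drastically underestimates the work. Carries propagate, so the relation between $c_{39}$ and $c_3+c_{13}$ is not determined by a bounded window of bits of $a$; one really needs an induction on the length $r$ with a \emph{strengthened} inductive hypothesis. The paper first uses the multiplication formula $V(3x)+1=V(x)+V(x+\tfrac13)+V(x+\tfrac23)$ to rewrite the target as
\[
V\!\left(13x+\tfrac13\right)+V\!\left(13x+\tfrac23\right)\le V(x)+V\!\left(x+\tfrac13\right)+V\!\left(x+\tfrac23\right),
\]
then proves, by a long case analysis on the trailing bits of $x$ (a dozen main cases with several subcases each), a non-cyclic digit-sum lemma with an explicit additive slack (the slack depending on the leading four bits of $x$). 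That lemma yields a cyclic inequality with constant $+5$, and only then does one kill the constant by the numerical Hasse--Davenport trick (replace $r$ by $kr$ and let $k\to\infty$). None of this is automatic, and in particular your observation $s(a)\le f-1$ is far too weak to reduce to ``compare carry counts'': you would still need $c_{39}\le 1+c_3+c_{13}$, which is not a local statement.

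Your second strategy is not available. The $V$-inequality must hold for \emph{all} $x$ with odd denominator; there is no rank or field-of-definition argument that cuts this down to finitely many $x$. (The proof of \cite[Lemma~2.1]{Ka-RL-T-Co2} establishes the equivalence of finite monodromy with the $V$-inequality; it does not reduce the latter to a finite check.) Indeed, the whole point of the elaborate induction is that the inequality is genuinely an infinite family of statements, special to the triple $(3,13;2)$. So as written, the proposal stops precisely where the real proof begins.
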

\begin{proof}By Lemma \ref{finitemonocrit}, we must show that
$$V(39x) + 1 \ge V(3x) + V(13x),$$
for all nonzero $x \in (\Q/\Z)_{\mathrm {prime\ to \ }p}$.
 If $x\in\frac{1}{39}{\mathbb Z}$ we check it by hand. If $39x \neq 0$, then by the change of variable $x \mapsto -x$ and the relation
 $V(x)+V(-x)=1$ for $x \neq 0$, it is equivalent to
$$
V(39x)\leq V(3x)+V(13x)
$$
which, applying the formula $V(3x)+1=V(x)+V(x+\frac{1}{3})+V(x+\frac{2}{3})$, is equivalent to
$$
V\left(13x+\frac{1}{3}\right)+V\left(13x+\frac{2}{3}\right)\leq V(x)+V\left(x+\frac{1}{3}\right)+V\left(x+\frac{2}{3}\right).
$$

In terms of the $[-]_{r}:=[-]_{2,r}$ function \cite[\S 4]{Ka-RL}, we need to show that, for all {\bf even} $r\geq 2$ and all integers $0<x<2^r-1$ we have
\begin{equation}\label{V-main1}
\left[13x+\frac{2^r-1}{3}\right]_r+\left[13x+\frac{2(2^r-1)}{3}\right]_r\leq [x]_r+\left[x+\frac{2^r-1}{3}\right]_r+\left[x+\frac{2(2^r-1)}{3}\right]_r.
\end{equation}

For even $r\geq 2$, let $A_r=\frac{2^r-1}{3}$ and $B_r=\frac{2(2^r-1)}{3}$. For odd $r\geq 1$, let $A_r=\frac{2^{r+1}-1}{3}$ and $B_r=\frac{2^r-2}{3}$. Note that, for $1\leq s<r$, we have
$$
\begin{array}{ccc}
 A_r=2^sA_{r-s}+A_s, & B_r=2^sB_{r-s}+B_s & \mbox{if $s$ is even}  \\
 A_r=2^sB_{r-s}+A_s, & B_r=2^sA_{r-s}+B_s & \mbox{if $s$ is odd} 
\end{array}
$$
For a non-negative integer $x$, let $[x]$ denote the sum of the 2-adic digits of $x$.

\begin{lem}
 Let $r\geq 1$ and let $0\leq x <2^r$ an integer. Then
 $$
[13x+A_r]+[13x+B_r]\leq [x]+[x+A_r]+[x+B_r]+4.
$$
Moreover, if $r\geq 4$ and the first four digits of $x$ are not $0100$, $1000$ or $1001$, then
$$
[13x+A_r]+[13x+B_r]\leq [x]+[x+A_r]+[x+B_r]+2.
$$
If $x<2^{r-2}$ (i.e. the first two of the $r$ $2$-adic digits of $x$ are $0$) then
$$
[13x+A_r]+[13x+B_r]\leq [x]+[x+A_r]+[x+B_r]+1.
$$
Finally, if the first four digits of $x$ are $1010$, then
$$
[13x+A_r]+[13x+B_r]\leq [x]+[x+A_r]+[x+B_r].
$$
\end{lem}

\begin{proof}
 We proceed by induction on $r$: for $r\leq 14$ one checks it by computer. Let $r\geq 15$ and $0\leq x< 2^r$, and consider the $2$-adic expansion of $x$. By adding leading 0's as needed, we will assume that it has exactly $r$ digits.
 
 In all cases below we will follow one of these two procedures: in the first one, for some $1\leq s\leq r-4$, we write $x=2^sy+z$ with $y<2^{r-s}$, $z<2^s$. Assume $s$ is even (otherwise, just interchange $A_{r-s}$ and $B_{r-s}$ below). Let $C$ be the total number of digit carries in the sums $13x+A_r=2^s(13y+A_{r-s})+(13z+A_s)$ and $13x+B_r=2^s(13y+B_{r-s})+(13z+B_s)$ and $D$ the total number of digit carries in the sums $x+A_r=2^s(y+A_{r-s})+(z+A_s)$ and $x+B_r=2^s(y+B_{r-s})+(z+B_s)$, and let $\lambda_s(z):=[13z+A_s]+[13z+B_s]-[z]-[z+A_s]-[z+B_s]$. If $C-D-\lambda_s(z)\geq 0$, then
 \begin{equation}\label{induct1}
[13x+A_r]+[13x+B_r] =[2^s(13y+A_{r-s})+(13z+A_s)]+[2^s(13y+B_{r-s})+(13z+B_s)]
\end{equation}
$$
=[13y+A_{r-s}]+[13z+A_s]+[13y+B_{r-s}]+[13z+B_s]-C\leq
$$
$$
\leq [y]+[y+A_{r-s}]+[y+B_{r-s}]+4+[z]+[z+A_s]+[z+B_s]+\lambda_s(z)-C\leq
$$
$$
\leq  [y]+[y+A_{r-s}]+[y+B_{r-s}]+4+[z]+[z+A_s]+[z+B_s]-D=
$$
$$
=[2^sy+z]+[2^s(y+A_{r-s})+(z+A_s)]+[2^s(y+B_{r-s})+(z+B_s)]+4=[x]+[x+A_r]+[x+B_r]+4
$$
by induction. Moreover, the first four digits of $x$ and $y$ are the same, so the better inequalities hold for $x$ whenever they do for $y$.

In the second procedure, for some $1\leq s\leq r-4$, we write $x=2^sy+z$ with $y<2^{r-s}$, $z<2^s$. Again we assume $s$ is even (otherwise, just interchange $A_{r-s}$ and $B_{r-s}$ below). For some $0<s'<s$ (which we also assume even without loss of generality) we find some $z'<2^{s'}$ such that the following conditions hold: if $z+A_s$ (respectively $z+B_s$, $13z+A_s$, $13z+B_s$) has $s+\alpha$ digits (resp. $s+\beta$, $s+\gamma$, $s+\delta$), then $z'+A_{s'}$ (resp. $z'+B_{s'}$, $13z'+A_{s'}$, $13z'+B_{s'}$) has $s'+\alpha$ digits (resp. $s'+\beta$, $s'+\gamma$, $s'+\delta$) and the first $\alpha$ digits of $z+A_s$ and $z'+A_{s'}$ (resp. the fist $\beta$ digits of $z+B_s$ and $z'+B_{s'}$, the first $\gamma$ digits of $13z+A_s$ and $13z'+A_{s'}$, the first $\delta$ digits of $13z+B_s$ and $13z'+B_{s'}$) coincide. Moreover, we require that $\lambda_s(z)\leq\lambda_{s'}(z')$.

Let $r'=r-s+s'$ and $x'=2^{s'}y+z'<2^{r'}$. Then the total number $C$ of digit carries in the sums $13x+A_r=2^s(13y+A_{r-s})+(13z+A_s)$ and $13x+B_r=2^s(13y+B_{r-s})+(13z+B_s)$ is the same as the total number of digit carries in the sums $13x'+A_{r'}=2^{s'}(13y+A_{r-s})+(13z'+A_{s'})$ and $13x'+B_{r'}=2^{s'}(13y+B_{r-s})+(13z'+B_{s'})$, and the total number $D$ of digit carries in the sums $x+A_r=2^s(y+A_{r-s})+(z+A_s)$ and $x+B_r=2^s(y+B_{r-s})+(z+B_s)$ is the same as the total number of digit carries in the sums $x'+A_{r'}=2^{s'}(y+A_{r-s})+(z'+A_{s'})$ and $x'+B_{r'}=2^{s'}(y+B_{r-s})+(z'+B_{s'})$, so we have
 \begin{equation}\label{induct2}
[13x+A_r]+[13x+B_r]=[2^s(13y+A_{r-s})+(13z+A_s)]+[2^s(13y+B_{r-s})+(13z+B_s)]
\end{equation}
$$
=[13y+A_{r-s}]+[13z+A_s]+[13y+B_{r-s}]+[13z+B_s]-C\leq
$$
$$
\leq [13y+A_{r-s}]+[13z'+A_{s'}]+[13y+B_{r-s}]+[13z'+B_{s'}]-C+
$$
$$
+[z]+[z+A_s]+[z+B_s]-[z']-[z'+A_{s'}]-[z'+B_{s'}]=
$$
$$
=[13x'+A_{r'}]+[13x'+B_{r'}]+[z]+[z+A_s]+[z+B_s]-[z']-[z'+A_{s'}]-[z'+B_{s'}]\leq
$$
$$
\leq [x']+[x'+A_{r'}]+[x'+B_{r'}]+4+[z]+[z+A_s]+[z+B_s]-[z']-[z'+A_{s'}]-[z'+B_{s'}]=
$$
$$
=[y]+[y+A_{r-s}]+[y+B_{r-s}]+4+[z]+[z+A_s]+[z+B_s]-D=
$$
$$
=[x]+[x+A_r]+[x+B_r]+4
$$
by induction. Moreover, the first four digits of $x$ and $x'$ are the same, so the better inequalities hold for $x$ whenever they do for $x'$.
 
 \bigskip
 \begin{enumerate}
 \item[{\it Case}  1:] $x\equiv 0 \mod 2$. We apply (\ref{induct1}) with $s=1$, so $z=0$ and $C=D=\lambda_s(z)=0$.
 
\item[{\it Case} 2:] The last three digits of $x$ are 001.

\item[{\it Case} 2a:] The last 4 digits of $x$ are 0001. Take $s=4$ in (\ref{induct1}), so $z=1=0001_2$. Then $D$ is clearly $0$ and $\lambda_s(z)=0$, so $C-D-\lambda_s(z)=C\geq 0$.

\item[{\it Case} 2b:] The last $5$ digits of $x$ are 01001. Take $s=3$, so $z=1=001_2$ and $y\equiv 1 \mod 4$. Here $A_3+1=6$ and $B_3+1=3$ are both $<8$, so $D=0$. On the other hand, $13+A_3=18=10010_2$ and the last two digits of $13y+B_{r-3}$ are $11$, so $C\geq 1$. Therefore $C-D-\lambda_s(z)\geq 1-1=0$.

\item[{\it Case} 2c:] The last six digits of $x$ are 011001. We can apply (\ref{induct2}) with $s=6$, $z=25=011001_2$, $s'=5$ and $z'=13=01101_2$, so $\lambda_s(z)=\lambda_{s'}(z')=2$.

\item[{\it Case} 2d:] The last seven digits of $x$ are 0111001. We can apply (\ref{induct2}) with $s=7$, $z=57=0111001_2$, $s'=6$ and $z'=31=011111_2$, so $\lambda_s(z)=\lambda_{s'}(z')=0$.

\item[{\it Case} 2e:] The last nine digits of $x$ are 001111001. We apply (\ref{induct2}) with $s=9$, $z=121=001111001_2$, $s'=2$ and $z'=1=01_2$, so $\lambda_s(z)=1<\lambda_{s'}(z')=3$.

\item[{\it Case} 2f:] The last ten digits of $x$ are 0101111001. We apply (\ref{induct1}) with $s=5$, so $z=25=11001_2$, $\lambda_s(z)=1$ and $D=1$. Since $13\cdot 25+A_5=101011010_2$ and the last five digits of $13y+B_{r-5}$ are $11001$, we get at least two digit carries in the sum $13x+A_r=2^5(13y+B_{r-5})+(13z+A_5)$, so $C-D-\lambda_s(z)\geq 2-1-1=0$.

\item[{\it Case} 2g:] The last ten digits of $x$ are 1101111001. We can apply (\ref{induct2}) with $s=10$, $z=889=1101111001_2$, $s'=7$ and $z'=109=1101101_2$, so $\lambda_s(z)=\lambda_{s'}(z')=2$.

\item[{\it Case} 2h:] The last nine digits of $x$ are 011111001. We can apply (\ref{induct2}) with $s=9$, $z=249=011111001_2$, $s'=8$ and $z'=121=01111001_2$, so $\lambda_s(z)=\lambda_{s'}(z')=1$.

\item[{\it Case} 2i:] The last nine digits of $x$ are 111111001. We apply (\ref{induct1}) with $s=5$, so $z=25=11001_2$, $\lambda_s(z)=1$ and $D=1$. Since $13\cdot 25+A_5=101011010_2$, $13\cdot 25+B_5=101001111_2$ and the last four digits of $13y+B_{r-5}$ and $13y+A_{r-5}$ are $1101$ and $1000$ respectively, we get at least one digit carry in each of the sums $13x+A_r=2^5(13y+B_{r-5})+(13z+A_5)$ and $13x+B_r=2^5(13y+A_{r-5})+(13z+B_5)$, so $C-D-\lambda_s(z)\geq 2-1-1=0$.

\item[{\it Case} 3:] The last $r-4$ digits of $x$ contain two consecutive 0's. Suppose that the right-most ones are located in positions $t-1,t$ for $t\geq 2$ (counting from the right). If $t=2$, $x$ is even and we apply case 1. If $t=3$ we apply case 2. Suppose that $t\geq 4$.

\item[{\it Case} 3a:] Either previous two digits are not $11$ or the next two digits are $11$. Take $s=t$ in (\ref{induct1}). Then $\lambda_s(z)\leq 1$, since $z<2^{s-2}$. Assume without loss of generality that $s$ is even. Both $z+A_s$ and $z+B_s$ are $<2^s$, so $D=0$. If $t=4$ and the two digits after the 0's are $10$ we can apply case 1. Otherwise, since we picked the right-most consecutive 0's, the following digits are at least $101$ (that is, $z\geq 2^{s-3}+2^{s-5}$). It follows that $13z+A_s$ and $13z+B_s$ both have $s+2$ digits, the first two being $10$ or $11$. Then, if the second-to-last digit of either $13y+A_{r-s}$ or $13y+B_{r-s}$ is $1$ (which is the case if the last two digits of $y$ are not $11$) or the last two digits of $y$ are $11$ and the first two digits of $13z+B_s$ are 11 (which is the case if the third and fourth digits of $z$ are 11), we get at least one digit carry in one of the sums $13x+A_r=2^s(13y+A_{r-s})+(13z+A_s)$ or $13x+B_r=2^s(13y+B_{r-s})+(13z+B_s)$, so $C\geq 1$ and $C-D-\lambda_s(z)\geq 0$.

\item[{\it Case} 3b:] The last five digits of $x$ are $00101$. We apply (\ref{induct1}) with $s=5$ and $z=5=00101_2$, so $D=0$ and $\lambda_s(z)=-1$. 

\item[{\it Case} 3c:] The last six digits of $x$ are $001011$. We apply (\ref{induct1}) with $s=6$ and $z=5=001011_2$, so $D=0$ and $\lambda_s(z)=0$.

\item[{\it Case} 3d:] The previous two digits are $11$, and the next four digits are $1010$. Take $s=t-2$ in (\ref{induct1}), so the last four digits of $y$ are 1100 and $\lambda_s(z)\leq 0$ by induction. If $z$ has no two consecutive $1$'s (that is, $z=101010...$), there are no digit carries in the sums $x+A_r=2^s(y+A_{r-s})+(z+A_s)$ and $x+B_r=2^s(y+B_{r-s})+(z+B_s)$, so $D=0$ and we are done. Otherwise $D=1$. Note that $13\cdot\overbrace{10...10}^{t\times 10}11+\overbrace{01...01}^{(t+1)\times 01}=1001\overbrace{00...0}^{2t-1}100$ so (if $s$ is even) $13z+A_s$ has $s+4$ digits, the first four being $1001$. Then there is one digit carry in the sum $13x+A_r=2^s(13y+A_{r-s})+(13z+A_s)$, so $C-D-\lambda_s(z)\geq 1-1-0=0$.

\item[{\it Case} 3e:] In all remaining cases, the previous two digits are $11$ and the next four are $1011$ (since we picked the right-most consecutive 0's). Take $s=t-4$ in (\ref{induct1}), then the last six digits of $y$ are $110010$ and the first two digits of $z$ are $11$. There is no digit carry in the sum $x+B_r=2^{s}(y+B_{r-s})+(z+B_{s})$ and at most three digit carries in the sum $x+A_r=2^{s}(y+A_{r-s})+(z+A_{s})$, so $D\leq 3$.

Suppose first that $s<6$. Then $\lambda_s(z)\leq 1$ and $\lambda_s(z)=1$ only for $s=4$, $z=13=1101_2$ and $s=5$, $z=25=11001_2$, $z=26=11010_2$ or $z=27=11011_2$, as one can check directly. On the other hand, $13z+A_{s}$ and $13z+B_{s}$ have $s$ digits, the first four being $1010$, $1011$, $1100$ or $1101$, and the last six digits of $13y+A_{r-s}$ and $13y+B_{r-s}$ are $011111$ and $110100$ respectively. Then we get at least three digit carries in the sum $13x+A_r=2^{s}(13y+A_{r-s})+(13z+A_{s})$. Moreover, in all cases where $\lambda_s(z)=1$ the first four digits of $13z+A_{s}$ are 1010 or 1011, so we get at least four digit carries in the sum above. In any case, $C-D-\lambda_s(z)\geq 3-3=0$.

If $s\geq 6$, then the first six digits of $z$ are at least $110101$, and the first four digits of $13z+A_{s}$ and $13z+B_{s}$ are $1011$, $1100$ or $1101$. If the first four digits of $13z+A_{s}$ are $1011$ we get five digit carries in the sum in the sum $13x+A_r=2^{s}(13y+A_{r-s})+(13z+A_{s})$. If the first four digits of $13z+A_{s}$ are $1100$ or $1101$, then so are the first four digits of $13z+B_{s}$, and we get at least three digit carries in the sum $13x+A_r=2^{s}(13y+A_{r-s})+(13z+A_{s})$ and at least two more in the sum $13x+B_r=2^{s}(13y+B_{r-s})+(13z+B_{s})$. In either case, we have $C-D-\lambda_s(z)\geq 5-3-2=0$. 

This concludes the proof of case 3.

\item[{\it Case} 4:] The last two digits of $x$ are $11$. Suppose that the last string of consecutive $1$'s has length $m\geq 2$.

\item[{\it Case} 4a:] $m\geq 5$, that is, the last five digits of $x$ are $11111$. We apply (\ref{induct1}) with $s=3$, so $z=7$. Then there is one digit carry in the sum $x+A_r=8(y+B_{r-3})+(7+A_3)$ and no digit carries in the sum $x+B_r=8(y+A_{r-3})+(7+B_3)$. If there is a $0$ before the five $1$'s, then we can assume that the last four digits of $y$ are $1011$ (otherwise we would have two consecutive 0's and we could apply case 2). Then $13\cdot 7+A_3=1100000_2$ and the last four digits of $13y+B_{r-3}$ are 1001, so there is one digit carry in the sum $13x+A_r=8(13y+B_{r-3})+(13\cdot 7+A_3)$. If there is a $1$ before the five $1$'s, then the last three digits of $13y+B_{r-3}$ are 101, so again there is (at least) one digit carry in the above sum. Therefore $C-D-\lambda_s(z)\geq 1-1-0=0$.

\item[{\it Case} 4b:] $m=4$, that is, the last five digits of $x$ are $01111$. We can assume that the previous digit is a $1$ (otherwise we apply case 2). We apply (\ref{induct1}) with $s=2$, so $z=3$ and $y$ ends with $1011$. Then there is one digit carry in the sum $x+B_r=4(y+B_{r-2})+(3+B_2)$ and no digit carries in the sum $x+A_r=4(y+A_{r-2})+(3+A_2)$. Also, $13\cdot 3+B_2=41=101001_2$, and the last four digits of $13y+B_{r-2}$ are $1001$, so there is one digit carry in the sum $13x+B_r=4(13y+B_{r-2})+(13\cdot 3+B_2)$. Therefore $C-D-\lambda_s(z)=1-1-0=0$.

\item[{\it Case} 4c:] $m=3$, that is, the last four digits of $x$ are $0111$. Again, we can assume that the previous digit is a $1$. We apply (\ref{induct1}) with $s=1$, so $z=1$ and $y$ ends with $1011$. Then there is one digit carry in the sum $x+A_r=2(y+B_{r-1})+(1+A_1)$ and no digit carries in the sum $x+B_r=2(y+A_{r-1})+(1+B_1)$. Also, $13\cdot 1+A_1=14=1110_2$, and the last four digits of $13y+B_{r-1}$ are $1001$, so there are at least four digit carries in the sum $13x+A_r=2(13y+B_{r-1})+(13\cdot 1+A_1)$. Therefore $C-D-\lambda_s(z)\geq 4-1-3=0$.

The remaining subcases of case 4 have $m=2$, that is, the last four digits of $x$ are $1011$.

\item[{\it Case} 4d:] The last six digits of $x$ are $111011$. We apply (\ref{induct1}) with $s=4$, so $z=11=1011_2$ and $y$ ends with $11$. Then there is one digit carry in the sum $x+B_r=16(y+B_{r-4})+(11+B_4)$ and no digit carries in the sum $x+A_r=16(y+A_{r-4})+(11+B_4)$. Also, $13\cdot 11+B_4=10011001_2$, and the last two digits of $13y+B_{r-4}$ are $01$, so there is at least one digit carry in the sum $13x+B_r=16(13y+B_{r-4})+(13\cdot 11+B_4)$. Therefore $C-D-\lambda_s(z)\geq 1-1-0=0$.

\item[{\it Case} 4e:] The last six digits of $x$ are $011011$. We apply (\ref{induct1}) with $s=3$, so $z=3=011_2$ and $y$ ends with $011$. Then there is one digit carry in the sum $x+A_r=8(y+B_{r-3})+(3+A_3)$ and no digit carries in the sum $x+B_r=8(y+A_{r-3})+(3+B_3)$. Also, $13\cdot 3+A_3=44=101100_2$, $13\cdot 3+B_3=41=101001_2$, and the last three digits of $13y+A_{r-3}$ and $13y+B_{r-3}$ are $100$ and $001$ respectively, so there is at least one digit carry in each of the sums $13x+A_r=8(13y+B_{r-3})+(13\cdot 3+A_3)$ and $13x+B_r=8(13y+A_{r-3})+(13\cdot 3+B_3)$. Therefore $C-D-\lambda_s(z)\geq 2-1-1=0$.

\item[{\it Case} 4f:] The last six digits of $x$ are $101011$. We can apply (\ref{induct2}) with $s=6$, $z=43=101011_2$, $s'=4$ and $z'=11=1011_2$, since $\lambda_s(z)=-1<0=\lambda_{s'}(z')$.

%

This ends the proof for case 4. It remains to check the case where $x$ ends with 01. By case 2, we can assume that the previous digit is a 1.

\item[{\it Case} 5:] The last four digits of $x$ are 0101. We apply (\ref{induct1}) with $s=3$, so $z=5=101_2$ and $y$ is even. Then there are no digit carries in either sum  $x+A_r=8(y+B_{r-3})+(5+A_3)$ or $x+B_r=8(y+A_{r-3})+(5+B_3)$, so $C-D-\lambda_s(z)=C+1> 0$.

\item[{\it Case} 6:] The last five digits of $x$ are 11101. We apply (\ref{induct1}) with $s=1$, so $z=1$ and $y$ ends with $1110$. Then there are no digit carries in the sums $x+A_r=2(y+B_{r-1})+(1+A_1)$ and $x+B_r=2(y+A_{r-1})+(1+B_1)$. Also, $13\cdot 1+B_1=13=1101_2$, and the last four digits of $13y+A_{r-1}$ are $1011$, so there are at least three digit carries in the sum $13x+B_r=2(13y+A_{r-1})+(13\cdot 1+B_1)$. Therefore $C-D-\lambda_s(z)\geq 3-0-3=0$.

In the remaining cases, the last six digits of $x$ are 101101.

\item[{\it Case} 7:] The last nine digits of $x$ are 111101101. We apply (\ref{induct1}) with $s=4$, so $z=13=1101_2$ and $y$ ends with $11110$. Then there are two digit carries in the sum $x+A_r=16(y+A_{r-4})+(13+A_4)$ and no digit carries in the sum $x+B_r=16(y+B_{r-4})+(13+B_4)$. Also, $13\cdot 13+A_4=10101110_2$, and the last five digits of $13y+A_{r-4}$ are $11011$, so there are at least three digit carries in the sum $13x+A_r=16(13y+A_{r-4})+(13\cdot 13+A_4)$. Therefore $C-D-\lambda_s(z)\geq 3-2-1=0$.

\item[{\it Case} 8:] The last nine digits of $x$ are 011101101. By case 2, we can assume that the previous digit is a 1. We apply (\ref{induct1}) with $s=6$, so $z=45=101101_2$ and $y$ ends with $1011$. Then there is one digit carry in the sum $x+B_r=64(y+B_{r-6})+(45+B_6)$ and no digit carries in the sum $x+A_r=64(y+A_{r-6})+(45+A_6)$. Also, $13\cdot 45+B_6=1001110011_2$ and the least four digits of $13y+B_{r-6}$ are 1001, so there are at least two digit carries in the sum $13x+B_r=64(13y+B_{r-6})+(13\cdot 45+B_6)$. Therefore $C-D-\lambda_s(z)\geq 2-1-1=0$.

\item[{\it Case} 9:] The last eight digits of $x$ are 01101101. By case 2, we can assume that the previous digit is a 1. We apply (\ref{induct2}) with $s=9$, $z=365=101101101_2$, $s'=6$ and $z'=45=101101_2$. Here $\lambda_s(z)=\lambda_{s'}(z')=1$.

\item[{\it Case} 10:] The last seven digits of $x$ are 0101101.By case 2, we can assume that the previous digit is a 1. We apply (\ref{induct2}) with $s=8$, $z=173=10101101_2$, $s'=4$ and $z'=11=1011_2$. Here $\lambda_s(z)=\lambda_{s'}(z')=0$.
\end{enumerate}
\end{proof}

\begin{cor}
Let $r\geq 2$ be even and let $0< x <2^r-1$ an integer. Then
 $$
[13x+A_r]_r+[13x+B_r]_r\leq [x]_r+[x+A_r]_r+[x+B_r]_r+5.
$$
\end{cor}

\begin{proof}
If $x=A_r=\frac{2^r-1}{3}$ or $x=B_r=\frac{2(2^r-1)}{3}$ it is obviuos. Otherwise, the $2$-adic expansion of $x$ contains two consecutive 0's or two consecutive 1's. 

In the first case, multiplying by a suitable power of 2 we can assume that the first two digits of $x$ are 00, that is, $x<2^{r-2}$. Then $x+A_r$ and $x+B_r$ are both $<2^r$, so
$$
[13x+A_r]_r+[13x+B_r]_r\leq[13x+A_r]+[13x+B_r]\leq
$$
$$
\leq[x]+[x+A_r]+[x+B_r]+4=[x]_r+[x+A_r]_r+[x+B_r]_r+4.
$$

In the second case, multiplying by a suitable power of 2 we can assume that the last two digits of $x$ are 11. Then $x+A_r$ ends with 10 and has at most $r+1$ digits. If $x+A_r<2^r$ then $[x+A_r]_r=[x+A_r]$, and if $x+A_r\geq 2^r$ then $[x+A_r]_r=[x+A_r-2^r+1]_r=[x+A_r-2^r+1]=[x+A_r]-1+1=[x+A_r]$. On the other hand, $x+B_r$ ends with 01 and has at most $r+1$ digits. If $x+B_r<2^r$ then $[x+B_r]_r=[x+B_r]$, and if $x+B_r\geq 2^r$ then $[x+B_r]_r=[x+B_r-2^r+1]_r=[x+B_r-2^r+1]=[x+B_r]-1$, since there is one digit carry in the sum $(x+B_r)+1$. In any case,
$$
[13x+A_r]_r+[13x+B_r]_r\leq[13x+A_r]+[13x+B_r]\leq
$$
$$
\leq[x]+[x+A_r]+[x+B_r]+4\leq[x]_r+[x+A_r]_r+[x+B_r]_r+5.
$$
\end{proof}

We can now finish the proof of Theorem \ref{3 x 13}. By the numerical Hasse-Davenport relation \cite[Lemma 2.10]{Ka-RL-T-Co3}, using that $A_{kr}=\frac{2^{kr}-1}{2^r-1}A_r$ and $B_{kr}=\frac{2^{kr}-1}{2^r-1}B_r$ we get, applying the previous corollary to $x':=\frac{2^{kr}-1}{2^r-1}x$,
$$
[13x'+A_{kr}]_{kr}+[13x'+B_{kr}]_{kr}\leq[x']_{kr}+[x'+A_{kr}]_{kr}+[x'+B_{kr}]_{kr}+5\Rightarrow
$$
$$
\Rightarrow
[13x+A_r]_r+[13x+B_r]_r\leq[x]_r+[x+A_r]_r+[x+B_r]_r+\frac{5}{k}
$$
and we conclude by taking $k\to\infty$.
\end{proof}

\begin{thm}\label{4 x 5} In characteristic $p=3$, the lisse sheaf $\sH(\psi,4 \times 5)$ on $\G_m/\F_9$ has finite $G_{arith}$ and finite $G_{geom}$.
\end{thm}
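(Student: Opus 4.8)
The plan is to follow the template of the proof of Theorem~\ref{3 x 13}, with the prime $2$ replaced by $3$ and the pair $3\times 13$ replaced by $4\times 5$. By Lemma~\ref{finitemonocrit}, applied with $A=4$ and $B=5$ (so $AB=20$ and the rank is $(A-1)(B-1)=12$), the assertion is equivalent to
$$V(20x)+1\ \ge\ V(4x)+V(5x)\qquad\text{for all nonzero }x\in(\Q/\Z)_{\mathrm{prime\ to\ }3}.$$
The finitely many $x\in\tfrac1{20}\Z$ (equivalently those with $20x=0$) are checked by hand. For the rest, evaluating this inequality at $-x$ and using $V(y)+V(-y)=1$ for $y\neq 0$ shows it is equivalent to $V(20x)\le V(4x)+V(5x)$; then the $4$-fold distribution relation $\sum_{j=0}^{3}V\!\left(y+\tfrac{j}{4}\right)=V(4y)+\tfrac32$ (the $N=4$ analogue of the relation $V(3x)+1=V(x)+V(x+\tfrac13)+V(x+\tfrac23)$ used in Theorem~\ref{3 x 13}), applied to both $V(20x)=V(4\cdot 5x)$ and $V(4x)$ and followed by cancellation of the common term $V(5x)$, reduces us to
$$V\!\left(5x+\tfrac14\right)+V\!\left(5x+\tfrac12\right)+V\!\left(5x+\tfrac34\right)\ \le\ V(x)+V\!\left(x+\tfrac14\right)+V\!\left(x+\tfrac12\right)+V\!\left(x+\tfrac34\right).$$

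I would then rewrite this in terms of the base-$3$ digit-sum function $[-]_r:=[-]_{3,r}$ of \cite[\S 4]{Ka-RL}. Since $4\mid 3^r-1$ exactly when $r$ is even, and every $x$ of interest has denominator dividing $3^r-1$ for some even $r$ (replacing $r$ by $2r$ if need be), the displayed inequality is equivalent to the following: for every even $r\ge 2$ and every integer $0<x<3^r-1$, with $A_r:=(3^r-1)/4$, $B_r:=(3^r-1)/2$, $C_r:=3(3^r-1)/4$ and all arguments reduced modulo $3^r-1$,
$$[5x+A_r]_r+[5x+B_r]_r+[5x+C_r]_r\ \le\ [x]_r+[x+A_r]_r+[x+B_r]_r+[x+C_r]_r.$$
The core step is to prove the corresponding inequality for the plain base-$3$ digit sum $[\,\cdot\,]$, with a bounded additive slack, by induction on $r$: one checks the base cases ($r$ up to an explicit bound) by computer, and for the inductive step one writes $x=3^{s}y+z$ with $z<3^{s}$ and compares the digit sums of $x$ with those of $y$ and $z$ by counting the carries in the base-$3$ additions $5x+A_r=3^{s}(5y+A_{r-s})+(5z+A_s)$ and its analogues for $B_r,C_r$ (and for the untranslated sums) --- here one uses companion quantities $A_r,B_r,C_r$ for odd $r$ chosen so that these splitting recursions hold, exactly as in Theorem~\ref{3 x 13}, and, as there, sharper digit-pattern-dependent forms of the estimate are needed. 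Finally the numerical Hasse--Davenport relation \cite[Lemma~2.10]{Ka-RL-T-Co3}, together with $A_{kr}=\tfrac{3^{kr}-1}{3^r-1}A_r$, $B_{kr}=\tfrac{3^{kr}-1}{3^r-1}B_r$, $C_{kr}=\tfrac{3^{kr}-1}{3^r-1}C_r$ applied to $x':=\tfrac{3^{kr}-1}{3^r-1}x$, replaces a slack of $C$ by a slack of $C/k$, and letting $k\to\infty$ removes it, completing the proof.

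I expect the main obstacle to be precisely this combinatorial induction. Compared with the binary situation of Theorem~\ref{3 x 13}, base-$3$ digits range over $\{0,1,2\}$, so there are more carry patterns to analyze in the additions $5x+A_r$, $5x+B_r$, $5x+C_r$ and their untranslated counterparts, and correspondingly more special leading-digit cases must be isolated in order to drive the additive slack down to a constant that survives the division by $k$ in the Hasse--Davenport scaling. Organizing this case analysis --- rather than any conceptual difficulty --- is where the real work lies.
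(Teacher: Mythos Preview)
Your overall strategy is sound and follows the right template, but your reduction step differs from the paper's in a way worth noting. You apply the $N=4$ distribution relation $\sum_{j=0}^{3}V(y+j/4)=V(4y)+\tfrac32$ once, arriving at an inequality involving the shifts $\tfrac14,\tfrac12,\tfrac34$; this forces the restriction $4\mid 3^r-1$, i.e.\ $r$ even, and hence the need for companion quantities $A_r,B_r,C_r$ at odd $r$ to run the carry-counting induction. The paper instead applies the \emph{duplication} formula ($N=2$) twice --- once to $V(20x)=V(2\cdot 10x)$ and $V(4x)=V(2\cdot 2x)$, then again to $V(10x)$ and $V(2x)$ --- obtaining the simpler target
\[
V\!\left(5x+\tfrac12\right)+V\!\left(10x+\tfrac12\right)\ \le\ V(x)+V\!\left(x+\tfrac12\right)+V\!\left(2x+\tfrac12\right).
\]
Because $2\mid 3^r-1$ for every $r\ge 1$, this works uniformly in $r$ with a single constant $A_r=(3^r-1)/2$, avoids the parity bookkeeping entirely, and has fewer terms on each side (two versus three, and three versus four). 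The subsequent digit-sum induction (slack $+2$, with sharper bounds depending on the leading two digits) and the Hasse--Davenport averaging to kill the slack proceed just as you outline. Your route should also go through, but expect the case analysis to be noticeably heavier: more shifts mean more simultaneous carry patterns to control, and the odd-$r$ companions for $\tfrac14,\tfrac34$ (unlike those for $\tfrac13,\tfrac23$ in base $2$) are less canonical.
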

\begin{proof}By Lemma \ref{finitemonocrit}, we must show that
$$V(20x) + 1 \ge V(4x) + V(5x),$$
for all nonzero $x \in (\Q/\Z)_{\mathrm {prime\ to \ }p}$.
If $x\in\frac{1}{20}{\mathbb Z}$ we check it by hand, otherwise, just as in the proof of Theorem \ref{3 x 13}, it is equivalent to
$$
V(20x)\leq V(4x)+V(5x)
$$
which, applying the duplication formula, is equivalent to
$$
V\left(5x+\frac{1}{2}\right)+V\left(10x+\frac{1}{2}\right)\leq V(x)+V\left(x+\frac{1}{2}\right)+V\left(2x+\frac{1}{2}\right).
$$

In terms of the $[-]_{r}:=[-]_{3,r}$ function \cite[\S 4]{Ka-RL}, we need to show that, for all $r\geq 2$ and all integers $0<x<3^r-1$ we have
\begin{equation}\label{V-main2}
\left[5x+\frac{3^r-1}{2}\right]_r+\left[10x+\frac{3^r-1}{2}\right]_r\leq [x]_r+\left[x+\frac{3^r-1}{2}\right]_r+\left[2x+\frac{3^r-1}{2}\right]_r.
\end{equation}

For a non-negative integer $x$, let $[x]$ denote the sum of the 3-adic digits of $x$.

\begin{lem}
 Let $r\geq 1$ and let $0\leq x <3^r$ an integer. Then
 $$
\left[5x+\frac{3^r-1}{2}\right]+\left[10x+\frac{3^r-1}{2}\right]\leq [x]+\left[x+\frac{3^r-1}{2}\right]+\left[2x+\frac{3^r-1}{2}\right]+2.
$$
Moreover, if the first two digits of $x$ are not 10, 11 or 21, then we have the better inequality
$$
\left[5x+\frac{3^r-1}{2}\right]+\left[10x+\frac{3^r-1}{2}\right]\leq [x]+\left[x+\frac{3^r-1}{2}\right]+\left[2x+\frac{3^r-1}{2}\right].
$$
\end{lem}

\begin{proof}
 We proceed by induction on $r$: for $r\leq 7$ one checks it by computer. Let $r\geq 8$ and $0\leq x< 3^r$, and consider the $3$-adic expansion of $x$. By adding leading 0's as needed, we will assume that it has exactly $r$ digits. Let $A_r=\frac{3^r-1}{2}$.
 
 In all cases below we will follow one of these two procedures: in the first one, for some $s\leq r-2$, we write $x=3^sy+z$ with $y<3^{r-s}$, $3<2^s$. Let $C$ be the total number of digit carries in the sums $5x+A_r=3^s(5y+A_{r-s})+(5z+A_s)$ and $10x+A_r=3^s(10y+A_{r-s})+(10z+A_s)$ and $D$ the total number of digit carries in the sums $x+A_r=3^s(y+A_{r-s})+(z+A_s)$ and $2x+A_r=3^s(2y+A_{r-s})+(2z+A_s)$, and let $\lambda_s(z)=[5z+A_s]+[10z+A_s]-[z]-[z+A_s]-[2z+A_s]$. If $2(C-D)-\lambda_s(z)\geq 0$, then
 \begin{equation}\label{induct3}
[5x+A_r]+[10x+A_r]=[3^s(5y+A_{r-s})+(5z+A_s)]+[3^s(10y+A_{r-s})+(10z+A_s)]=
\end{equation}
$$
=[5y+A_{r-s}]+[5z+A_s]+[10y+A_{r-s}]+[10z+A_s]-2C\leq
$$
$$
\leq [y]+[y+A_{r-s}]+[2y+A_{r-s}]+2+[z]+[z+A_s]+[2z+A_s]+\lambda_s(z)-2C\leq
$$
$$
\leq  [y]+[y+A_{r-s}]+[2y+A_{r-s}]+2+[z]+[z+A_s]+[2z+A_s]-2D=
$$
$$
=[3^sy+z]+[3^s(y+A_{r-s})+(z+A_s)]+[3^s(2y+A_{r-s})+(2z+A_s)]+2=[x]+[x+A_r]+[2x+A_r]+2
$$
by induction. Moreover, the first two digits of $x$ and $y$ are the same, so the better inequality holds for $x$ whenever they do for $y$.

In the second procedure, for some $1\leq s\leq r-2$, we write $x=3^sy+z$ with $y<3^{r-s}$, $z<3^s$. For some $0<s'<s$ we find some $z'<3^{s'}$ such that the following conditions hold: if $z+A_s$ (respectively $2z+A_s$, $5z+A_s$, $10z+A_s$) has $s+\alpha$ digits (resp. $s+\beta$, $s+\gamma$, $s+\delta$), then $z'+A_{s'}$ (resp. $2z'+A_{s'}$, $5z'+A_{s'}$, $10z'+A_{s'}$) has $s'+\alpha$ digits (resp. $s'+\beta$, $s'+\gamma$, $s'+\delta$) and the first $\alpha$ digits of $z+A_s$ and $z'+A_{s'}$ (resp. the fist $\beta$ digits of $2z+A_s$ and $2z'+A_{s'}$, the first $\gamma$ digits of $5z+A_s$ and $5z'+A_{s'}$, the first $\delta$ digits of $10z+A_s$ and $10z'+A_{s'}$) coincide. Moreover, we require that $\lambda_s(z)\leq\lambda_{s'}(z')$.

Let $r'=r-s+s'$ and $x'=3^{s'}y+z'<3^{r'}$. Then the total number $C$ of digit carries in the sums $5x+A_r=3^s(5y+A_{r-s})+(5z+A_s)$ and $10x+A_r=3^s(10y+A_{r-s})+(10z+A_s)$ is the same as the total number of digit carries in the sums $5x'+A_{r'}=3^{s'}(5y+A_{r-s})+(5z'+A_{s'})$ and $10x'+A_{r'}=3^{s'}(10y+A_{r-s})+(10z'+A_{s'})$, and the total number $D$ of digit carries in the sums $x+A_r=3^s(y+A_{r-s})+(z+A_s)$ and $2x+A_r=3^s(2y+A_{r-s})+(2z+A_s)$ is the same as the total number of digit carries in the sums $x'+A_{r'}=3^{s'}(y+A_{r-s})+(z'+A_{s'})$ and $2x'+A_{r'}=3^{s'}(2y+A_{r-s})+(2z'+A_{s'})$, so we have
 \begin{equation}\label{induct4}
[5x+A_r]+[10x+A_r]=[3^s(5y+A_{r-s})+(5z+A_s)]+[3^s(10y+A_{r-s})+(10z+B_s)]
\end{equation}
$$
=[5y+A_{r-s}]+[5z+A_s]+[10y+A_{r-s}]+[10z+A_s]-2C\leq
$$
$$
\leq [5y+A_{r-s}]+[5z'+A_{s'}]+[10y+A_{r-s}]+[10z'+A_{s'}]-2C+
$$
$$
+[z]+[z+A_s]+[2z+A_s]-[z']-[z'+A_{s'}]-[2z'+A_{s'}]=
$$
$$
=[5x'+A_{r'}]+[10x'+A_{r'}]+[z]+[z+A_s]+[2z+A_s]-[z']-[z'+A_{s'}]-[2z'+A_{s'}]\leq
$$
$$
\leq [x']+[x'+A_{r'}]+[2x'+A_{r'}]+2+[z]+[z+A_s]+[2z+A_s]-[z']-[z'+A_{s'}]-[2z'+A_{s'}]=
$$
$$
=[y]+[y+A_{r-s}]+[2y+A_{r-s}]+2+[z]+[z+A_s]+[2z+A_s]-D=
$$
$$
=[x]+[x+A_r]+[2x+A_r]+2
$$
by induction. Moreover, the first two digits of $x$ and $x'$ are the same, so the better inequality holds for $x$ whenever they do for $x'$.
 
 \bigskip
 \begin{enumerate}
 \item[{\it Case} 1:] $x\equiv 0 \mod 3$. We apply (\ref{induct3}) with $s=1$, so $z=0$ and $C=D=\lambda_s(z)=0$.
 
\item[{\it Case} 2:] The last two digits of $x$ are $01$ or $02$. We apply (\ref{induct3}) with $s=2$, so $z\leq 2$ and $D=\lambda_s(z)=0$ (since $2z+A_2\leq 8<3^2$). Therefore $2(C-D)-\lambda_s(z)=2C\geq 0$.

\item[{\it Case} 3:] The last three digits of $x$ are $011$. We apply (\ref{induct3}) with $s=3$, so $z=4=011_3$ and $D=\lambda_s(z)=0$ (since $2z+A_3=21<3^3$). Therefore $2(C-D)-\lambda_s(z)=2C\geq 0$.

\item[{\it Case} 4:] The last three digits of $x$ are $111$. We apply (\ref{induct3}) with $s=1$, so $z=1=1_3$, $\lambda_s(z)=1$ and $D=0$ (since $2y+A_{r-1}$ ends with a 0). On the other hand, $10y+A_{r-1}$ ends with $22$ and $10z+A_1=102_3$, so $C\geq 1$. Therefore $2(C-D)-\lambda_s(z)\geq 2-1=1$.

\item[{\it Case} 5:] The last $r-1$ digits of $x$ contain the strings $00$ or $01$. Pick $s$ such that the first two digits of $z$ are $00$ or $01$. Then $D=0$ and $\lambda_s(z)\leq 0$, so $2(C-D)-\lambda_s(z)\geq 2C\geq 0$. If $s=r-1$ and the first digit of $x$ is not 1 then we have the better inequality (since $\lambda_1(0)=\lambda_1(2)=0$).

\item[{\it Case} 6:] The last $r-1$ digits of $x$ contain the string $10$. Pick $s$ such that the last digit of $y$ is $1$ and the first digit of $z$ is $0$. Then the last digit of $2y+A_{r-s}$ is 0, so $D=0$ and $\lambda_s(z)\leq 0$. Therefore $2(C-D)-\lambda_s(z)\geq 2C\geq 0$.

\item[{\it Case} 7:] $x$ contains one of the strings $1202$ or $2202$. Pick $s$ such that the last two digits of $y$ are $12$ or $22$ and the first two digits of $z$ are $02$. Then the last two digits of $2y+A_{r-s}$ are $02$ or $12$, so there is at most one digit carry in the sum $2x+A_r=3^s(2y+A_{r-s})+(2z+A_s)$, and $D\leq 1$. On the other hand, the last digit of $5y+A_{r-s}$ is 2, and $3^s<5z+A_s<3^{s+1}$, so there is at least one digit carry in the sum $5x+A_r=3^s(5y+A_{r-s})+(5z+A_s)$. Therefore $2(C-D)-\lambda\geq 2(1-1)-0=0$.

\item[{\it Case} 8:] The last four digits of $x$ are 0211. Using cases 5,6 and 7, we can assume that the previous three digits are 202. We apply (\ref{induct4}) with $s=7$, $z=1642=2020211_3$, $s'=5$ and $z'=184=20211_3$. Here $\lambda_{s}(z)=-2<0=\lambda_{s'}(z')$.

\item[{\it Case} 9:] The last four digits of $x$ are 1211. Let $t$ be the number of consecutive 1's before the 2. We apply (\ref{induct3}) with $s=3$, so $z=22=211_3$ and $\lambda_s(z)=2$. There are $t$ digit carries in the sum $x+A_r=27(y+A_{r-3})+(z+A_3)$ and no digit carry in $2x+A_r=27(2y+A_{r-3})+(2z+A_3)$, so $D=t$. On the other hand, the last $t$ digits of $5y+A_{r-3}$ and $10y+A_{r-3}$ are $\overbrace{22\ldots 22}^{t-1}0$ and $\overbrace{11\ldots 11}^{t-3}022$ respectively, and $5\cdot 22+A_3=123=11120_3$ and $10\cdot 22+A_3=233=22122_3$, so we get $t-1$ digit carries in the sum $5x+A_r=27(5y+A_{r-3})+(5z+A_3)$ and at least two more in the sum $10x+A_r=27(10y+A_{r-3})+(10z+A_3)$. Therefore $2(C-D)-\lambda_s(z)\geq 2(t+1-t)-2=0$.

\item[{\it Case} 10:] The last four digits of $x$ are 2211. We apply (\ref{induct3}) with $s=2$, so $z=4=11_3$, $\lambda_s(z)=2$ and the last two digits of $2y+A_{r-2}$ are 02, so $D=1$. On the other hand, $5y+A_{r-2}$ ends with $22$ and $5z+A_2=24=220_3$, so $C\geq 2$. Therefore $2(C-D)-\lambda_s(z)\geq 2-2=0$.

\item[{\it Case} 11:] The last three digits of $x$ are 021 or 022. Using cases 5,6 and 7, we can assume that the previous three digits are 202. We apply (\ref{induct4}) with $s=6$, $z=547=202021_3$ or $z=548=202022_3$, $s'=4$, and $z'=61=2021_3$ or $z'=62=2022_3$ respectively. Here $\lambda_s(z)=-3<-1=\lambda_{s'}(z')$ in the $z=547$ case and $\lambda_s(z)=-2<0=\lambda_{s'}(z')$ in the $z=548$ case.

\item[{\it Case} 12:] The last three digits of $x$ are $121$. This is similar to case 9, with $s=2$, $z=7=21_3$ and $\lambda_s(z)=1$ now. Here $5\cdot 7+A_2=39=1110_3$ and $10\cdot 7+A_2=74=2202_3$, so we get $t-1$ digit carries in the sum $5x+A_r=9(5y+A_{r-2})+(5z+A_2)$ and at least two more in the sum $10x+A_r=9(10y+A_{r-2})+(10z+A_2)$. Therefore $2(C-D)-\lambda_s(z)\geq 2(t+1-t)-1=1$.

\item[{\it Case} 13:] The last three digits of $x$ are $221$. We apply (\ref{induct3}) with $s=1$, so $z=1=1_3$ and $\lambda_s(z)=1$. There is only one digit carry in the sum $2x+A_r=3(2y+A_{r-1})+(2z+A_1)$, so $D=1$. The last two digits of $5y+A_{r-2}$ are 22, so we get at least two digit carries in the sum $5x+A_r=3(5y+A_{r-1})+(5z+A_1)$. Therefore $2(C-D)-\lambda_s(z)\geq 2(2-1)-1=1$.


\item[{\it Case} 14:] The last two digits of $x$ are $12$. Let $t$ be the number of consecutive 1's before the last digit. We apply (\ref{induct3}) with $s=1$, so $z=2=2_3$ and $\lambda_s(z)=0$. There are $t$ digit carries in the sum $x+A_r=3(y+A_{r-1})+(z+A_1)$ and no digit carry in $2x+A_r=3(2y+A_{r-1})+(2z+A_1)$, so $D=t$. On the other hand, the last $t$ digits of $5y+A_{r-1}$ and $10y+A_{r-1}$ are $\overbrace{22\ldots 22}^{t-1}0$ and $\overbrace{11\ldots 11}^{t-3}022$ respectively, and $5\cdot 2+A_1=11=102_3$ and $10\cdot 2+A_1=21=210_3$, so we get $t-1$ digit carries in the sum $5x+A_r=3(5y+A_{r-1})+(5z+A_1)$ and at least one more in the sum $10x+A_r=3(10y+A_{r-1})+(10z+A_1)$. Therefore $2(C-D)-\lambda_s(z)\geq 2(t-t)-0=0$.

\item[{\it Case} 15:] The last three digits of $x$ are 122. Let $t$ be the number of consecutive 1's before the 22. We apply (\ref{induct3}) with $s=2$, so $z=8=22_3$ and $\lambda_s(z)=-2$. There are $t$ digit carries in the sum $x+A_r=9(y+A_{r-2})+(z+A_2)$ and no digit carry in $2x+A_r=9(2y+A_{r-2})+(2z+A_2)$, so $D=t$. On the other hand, the last $t$ digits of $5y+A_{r-2}$ are $\overbrace{22\ldots 22}^{t-1}0$, and $5\cdot 8+A_2=44=1122_3$, so we get $t-1$ digit carries in the sum $5x+A_r=9(5y+A_{r-2})+(5z+A_2)$. Therefore $2(C-D)-\lambda_s(t)\geq 2(t-1-t)+2=0$.


\item[{\it Case} 16:] The last three digits of $x$ are 222. We apply (\ref{induct3}) with $s=1$, so $z=2=2_3$ and $\lambda_s(z)=0$. There is only one digit carry in the sum $2x+A_r=3(2y+A_{r-1})+(2z+A_1)$, so $D=1$. The last two digits of $5y+A_{r-2}$ are 22, so we get at least one digit carry in the sum $5x+A_r=3(5y+A_{r-1})+(5z+A_1)$. Therefore $2(C-D)-\lambda_s(z)\geq 2(1-1)=0$.
\end{enumerate}
\end{proof}

\begin{cor}
Let $r\geq 1$ and let $0< x <2^r-1$ an integer. Then
 $$
[5x+A_r]_r+[10x+A_r]_r\leq [x]_r+[x+A_r]_r+[2x+A_r]_r+6.
$$
\end{cor}

\begin{proof}
If $x=A_r=\frac{3^r-1}{2}$ or $r$ is even and $x=\frac{3^r-1}{4}$ or $x=\frac{3(3^r-1)}{4}$ then $[x]_r+[x+A_r]_r+[2x+A_r]_r=4r$ and the inequality is obvious. Otherwise, the $3$-adic expansion of $x$ contains two consecutive digits with are not $11$, $02$ or $20$. Multiplying $x$ by a suitable power of 3, we can assume that they are the last two digits.

Note that $x+A_r$ has at most $r+1$ digits, and if it has $r+1$ then the first one is $1$. In that case, $[x+A_r]_r=[x+A_r-3^r+1]=[x+A_r+1]-1$. Since the last two digits of $x$ are not $11$, the last two digits of $x+A_r$ are not $22$, so there is at most one digit carry in the sum $(x+A_r)+1$. Therefore $[x+A_r+1]-1\geq[x+A_r]-2$. In any case, we get $[x+A_r]_r\geq[x+A_r]-2$.

$2x+A_r$ has at most $r+1$ digits. If it has $r+1$, let $a\in\{1,2\}$ be the first one. Then $[2x+A_r]_r=[2x+A_r-a\cdot 3^r+a]=[2x+A_r+a]-a$. Since the last two digits of $x$ are not $02$ or $20$, the last two digits of $2x+A_r$ are not $21$ or $22$, so there is at most one digit carry in the sum $(2x+A_r)+a$. Therefore $[2x+A_r+a]-a\geq[2x+A_r]-2$. In any case, we get $[2x+A_r]_r\geq[2x+A_r]-2$.

So we have
$$
[5x+A_r]_r+[10x+A_r]_r\leq [5x+A_r]+[10x+A_r]\leq
$$
$$
\leq[x]+[x+A_r]+[2x+A_r]+2\leq[x]_r+[x+A_r]_r+2+[2x+A_r]_r+2+2
$$
\end{proof}

We conclude the proof of (\ref{V-main2}) by using the numerical Hasse-Davenport formula as in Theorem \ref{3 x 13}.
\end{proof}

\begin{thm}\label{28^times} In characteristic $p=3$, the lisse sheaf $\sH(\psi,28^\times)$ on $\G_m/\F_9$ has finite $G_{arith}$ and finite $G_{geom}$.
\end{thm}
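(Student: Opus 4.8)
The plan is to follow the pattern of the proofs of Theorems \ref{3 x 13} and \ref{4 x 5}. Since $A = 28 = 4\cdot 7 = 2^2\cdot 7$ is divisible by precisely the two primes $p_1 = 2$ and $p_2 = 7$, Lemma \ref{finitemonocritbis} applies; combined with Lemmas \ref{descentbis}, \ref{weightbis}, \ref{tracefieldbis} and \ref{detter} (exactly as $\sH(\psi,A\times B)$ was related to $\sH yp(\psi,A\times B;\triv)$ in the previous two theorems), it reduces the assertion to proving that
$$V(28x) + V(2x) + V(-x) \ge V(14x) + V(4x)$$
for all $x \in (\Q/\Z)_{\mathrm{prime\ to\ }p}$. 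For the finitely many $x$ with $28x = 0$ this is a direct check. For $28x \neq 0$ — so that $x\neq 0$ and neither $14x + 1/2$ nor $2x + 1/2$ vanishes in $\Q/\Z$ — I would apply the duplication formula $V(2y) = V(y) + V(y + 1/2) - 1/2$ to $y = 14x$ and to $y = 2x$; this cancels $V(14x)$, $V(2x)$ and the two $-1/2$'s and rewrites the inequality equivalently as $V(14x+1/2) + V(-x) \ge V(2x+1/2)$, and then, replacing $x$ by $-x$ and using $V(t)+V(-t) = 1$ (just as in the proof of Theorem \ref{3 x 13}), as the constant-free inequality
$$V(14x+1/2) \le V(x) + V(2x+1/2), \qquad x \in (\Q/\Z)_{\mathrm{prime\ to\ }p}.$$

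Next I would translate this into base-$3$ digit sums, exactly as was done in the proofs of Theorems \ref{3 x 13} and \ref{4 x 5}. Writing $x = a/(3^r-1)$ and noting that $1/2$ corresponds to $A_r := (3^r-1)/2$, whose base-$3$ expansion is a string of $r$ ones, the inequality becomes: for all $r \ge 2$ and all integers $0 \le a < 3^r - 1$,
$$[14a + A_r]_r \le [a]_r + [2a + A_r]_r,$$
in the notation $[-]_r := [-]_{3,r}$ of \cite[\S 4]{Ka-RL}. To prove this I would establish first, by induction on $r$, an auxiliary lemma in terms of the ordinary base-$3$ digit sum $[\,\cdot\,]$ of a nonnegative integer,
$$[14x + A_r] \le [x] + [2x + A_r] + C \qquad (r \ge 1,\ 0 \le x < 3^r)$$
for a fixed small constant $C$, together with strictly better constants under suitable conditions on the leading base-$3$ digits of $x$. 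As in the proofs above, the base cases $r \le r_0$ would be verified by computer, and for $r > r_0$ one writes $x = 3^s y + z$ with $z < 3^s$ and compares the number $C$ of digit carries in $14x + A_r = 3^s(14y + A_{r-s}) + (14z + A_s)$ with the number $D$ of carries in $2x + A_r = 3^s(2y + A_{r-s}) + (2z + A_s)$; the induction goes through as long as $2(C-D) - \mu_s(z) \ge 0$, where $\mu_s(z) := [14z + A_s] - [z] - [2z + A_s]$, sometimes only after first replacing $z$ by a shorter $z'$ realizing the same carry patterns with no larger $\mu$, exactly the ``second procedure'' of Theorem \ref{3 x 13}. Here $14 = 112_3$, so one must analyse the carry structure of $14z + A_s$ according to the trailing base-$3$ digits of $x$.

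From this lemma one deduces, as in the two Corollaries above, a version of the displayed $[-]_r$-inequality with an additive constant $C'$ independent of $r$ (this needs a short case analysis of the gap between $[\,\cdot\,]_r$, a digit sum reduced modulo $3^r - 1$, and $[\,\cdot\,]$, which can occur when $14a + A_r$ or $2a + A_r$ is at least $3^r - 1$). Finally, the numerical Hasse--Davenport relation \cite[Lemma 2.10]{Ka-RL-T-Co3}, applied with $x' := \frac{3^{kr}-1}{3^r-1}x$ (for which $[x']_{kr} = k[x]_r$ and $A_{kr} = \frac{3^{kr}-1}{3^r-1}A_r$), gives the inequality at level $r$ with $C'$ replaced by $C'/k$; letting $k \to \infty$ and using that both sides are integers removes the constant, which proves the digit inequality and hence the theorem.

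The hard part will be the inductive digit-carry argument for the auxiliary lemma. Just as in Theorems \ref{3 x 13} and \ref{4 x 5}, the real work is a long case analysis finding, for each trailing-digit pattern of $x$, a decomposition $x = 3^sy + z$ — of the ``first'' kind or of the ``second'' kind — for which $2(C-D) - \mu_s(z) \ge 0$ and for which the constant it costs is small enough that the Corollary together with the Hasse--Davenport step still kills it. I expect this to be somewhat lighter than the characteristic-$2$ analysis of Theorem \ref{3 x 13}, since here one bounds a single digit sum by a sum of two rather than two by three, but the multiplier $14 = 112_3$ (in place of $5$ and $10$ in Theorem \ref{4 x 5}) makes the carry bookkeeping for $14z + A_s$ the delicate point.
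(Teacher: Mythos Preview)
Your proposal is correct and follows essentially the same route as the paper: reduce via Lemma~\ref{finitemonocritbis} and the duplication formula to $V(14x+1/2)\le V(x)+V(2x+1/2)$, then to the digit inequality $[14a+A_r]_r\le[a]_r+[2a+A_r]_r$, prove an auxiliary lemma $[14x+A_r]\le[x]+[2x+A_r]+1$ by induction on $r$ with a carry-counting case analysis, deduce a corollary with additive constant $3$, and kill the constant via numerical Hasse--Davenport. In the event, the paper needs only the ``first procedure'' (eleven cases) and no leading-digit refinement of the constant, so your anticipated second procedure and sharpened constants turn out to be unnecessary here, but this does not affect correctness.
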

\begin{proof}
By Lemma \ref{finitemonocritbis}, we must show that
$$V(28x) + V(2x) + V(-x) \ge V(4x) + V(14x),$$
for all  $x \in (\Q/\Z)_{\mathrm {prime\ to \ }p}$.  If $x\in\frac{1}{28}{\mathbb Z}$ we check it by hand, otherwise, just as in the proof of Theorem \ref{3 x 13}, it is equivalent to
$$
V(28x)+V(2x)\leq V(x)+V(4x)+V(14x)
$$
which, applying the duplication formula, is equivalent to
$$
V\left(14x+\frac{1}{2}\right)\leq V(x)+V\left(2x+\frac{1}{2}\right).
$$

In terms of the $[-]_{r}:=[-]_{3,r}$ function \cite[\S 4]{Ka-RL}, we need to show that, for all $r\geq 2$ and all integers $0<x<3^r-1$ we have
\begin{equation}\label{V-main3}
\left[14x+\frac{3^r-1}{2}\right]_r\leq [x]_r+\left[2x+\frac{3^r-1}{2}\right]_r.
\end{equation}

For a non-negative integer $x$, let $[x]$ denote the sum of the 3-adic digits of $x$.

\begin{lem}
 Let $r\geq 1$ and let $0\leq x <3^r$ an integer. Then
 $$
\left[14x+\frac{3^r-1}{2}\right]\leq [x]+\left[2x+\frac{3^r-1}{2}\right]+1.
$$
\end{lem}

\begin{proof}
 We proceed by induction on $r$: for $r\leq 3$ one checks it by computer. Let $r\geq 4$ and $0\leq x< 3^r$, and consider the $3$-adic expansion of $x$. By adding leading 0's as needed, we will assume that it has exactly $r$ digits. Let $A_r=\frac{3^r-1}{2}$.
 
 In all cases below we will follow this procedure: for some $s\leq r-2$, we write $x=3^sy+z$ with $y<3^{r-s}$, $z<3^s$. Let $C$ be the total number of digit carries in the sum $14x+A_r=3^s(14y+A_{r-s})+(14z+A_s)$ and $D$ the total number of digit carries in the sum  $2x+A_r=3^s(2y+A_{r-s})+(2z+A_s)$, and let $\lambda_s(z)=[14z+A_s]-[z]-[2z+A_s]$. If $2(C-D)-\lambda_s(z)\geq 0$, then
 \begin{equation}\label{induct5}
[14x+A_r]=[3^s(14y+A_{r-s})+(14z+A_s)]
=[14y+A_{r-s}]+[14z+A_s]-2C\leq
\end{equation}
$$
\leq [y]+[2y+A_{r-s}]+1+[z]+[2z+A_s]+\lambda_s(z)-2C\leq [y]+[2y+A_{r-s}]+1+[z]+[2z+A_s]-2D=
$$
$$
=[3^sy+z]+[3^s(2y+A_{r-s})+(2z+A_s)]+1=[x]+[2x+A_r]+1
$$
by induction. 

 \bigskip
 \begin{enumerate}
 \item[{\it Case} 1:] $x\equiv 0 \mod 3$. We apply (\ref{induct5}) with $s=1$, so $z=0$ and $C=D=\lambda_s(z)=0$.
 
\item[{\it Case} 2:] The last two digits of $x$ are $01$. We apply (\ref{induct5}) with $s=1$, so $z=1$, $\lambda_s(z)=1$ and $D=0$. Here $14z+A_1=15=120_3$, and the last digit of $14y+A_{r-1}$ is 1, so $C\geq 1$. Therefore $2(C-D)-\lambda_s(z)\geq 2-0-1=1$.

\item[{\it Case} 3:] The last three digits of $x$ are $011$ or $111$. We apply (\ref{induct5}) with $s=2$, so $z=4=11_3$ and $\lambda_s(z)=0$. Here $2z+A_2=12=110_3$ and the last digit of $2y+A_{r-2}$ is $1$ or $0$, so $D=0$. Therefore $2(C-D)-\lambda_s(z)=2C\geq 0$.

\item[{\it Case} 4:] The last four digits of $x$ are $0211$. We apply (\ref{induct5}) with $s=3$, so $z=22=211_3$ and $\lambda_s(z)=0$. Suppose that the last $t$ digits of $y$ are $2020\ldots 20$ (if $t$ is even) or $020\ldots 20$ (if $t$ is odd) and the previous one is not $0$ (if $t$ is even) or $2$ (if $t$ is odd). Then $2z+A_3=57=2010_3$ and the last $t$ digits of $2y+A_{r-3}$ are $\overbrace{22\ldots 22}^{t-1}1$ with the previous one (if it exists) different than 2, so $D=t$. On the other hand, $14z+A_3=321=102220_3$ and the last $t$ digits of $14y+A_{r-3}$ are $\overbrace{22\ldots 22}^{t-3}121$, so $C\geq t$ and therefore $2(C-D)-\lambda_s(t)\geq 2(t-t)=0$.

\item[{\it Case} 5:] The last four digits of $x$ are $1211$. We apply (\ref{induct5}) with $s=3$, so $z=22=211_3$ and $\lambda_s(z)=0$. Here $2z+A_3=57=2010_3$ and the last digit of $2y+A_{r-3}$ is $0$, so $D=0$. Therefore $2(C-D)-\lambda_s(z)=2C\geq 0$.

\item[{\it Case} 6:] The last four digits of $x$ are $2211$. We apply (\ref{induct5}) with $s=2$, so $z=4=11_3$ and $\lambda_s(z)=0$. Here $2z+A_2=12=110_3$ and the last two digits of $2y+A_{r-2}$ are $02$, so $D=1$. On the other hand, $14z+A_2=60=2020_3$ and the last two digits of $14y+A_{r-2}$ are $22$, so $C\geq 1$. Therefore $2(C-D)-\lambda_s(z)\geq 2(1-1)=0$.

\item[{\it Case} 7:] The last three digits of $x$ are $021$. Here we can proceed as in case 4 if we take $s=2$ and $z=7=21_3$ (so $\lambda_s(z)=-1$), since $2z+A_2=18=200_3$ and $14z+A_2=102=10210_3$.

\item[{\it Case} 8:] The last three digits of $x$ are $121$. We apply (\ref{induct5}) with $s=2$, so $z=7=21_3$ and $\lambda_s(z)=-1$. Since the last digit of $2y+A_{r-2}$ is 0, we have $D=0$, so $2(C-D)-\lambda_s(z)=2C+1>0$.

\item[{\it Case} 9:] The last three digits of $x$ are $221$. We apply (\ref{induct5}) with $s=1$, so $z=1=1_3$ and $\lambda_s(z)=1$. The last two digits of $2y+A_{r-1}$ are 02, so $D=1$. On the other hand, the last two digits of $14y+A_{r-1}$ are 22 and $14z+A_1=15=120_3$, so $C\geq 2$. Therefore $2(C-D)-\lambda_s(z)\geq 2(2-1)-1>0$.

\item[{\it Case} 10:] The last two digits of $x$ are $02$ or $12$, or the last three digits are $122$ or $222$. We apply (\ref{induct5}) with $s=1$, so $z=2=2_3$ and $\lambda_s(z)=-2$. Here $2z+A_1=5=12_3$ and the last two digits of $2y+A_{r-1}$ are not 22, so $D\leq 1$. Therefore $2(C-D)-\lambda_s(z)\geq 2(C-1)+2=2C\geq 0$.

\item[{\it Case} 11:] The last three digits of $x$ are 022. We apply (\ref{induct5}) with $s=2$, so $z=8=22_3$ and $\lambda_s(z)=-2$. Suppose as in case 4 that the last $t$ digits of $y$ are $2020\ldots 20$ (if $t$ is even) or $020\ldots 20$ (if $t$ is odd) and the previous one is not $0$ (if $t$ is even) or $2$ (if $t$ is odd). Then $2z+A_2=20=202_3$, so $D=t$. On the other hand, $14z+A_2=116=11022_3$, so $C\geq t-1$ and therefore $2(C-D)-\lambda_s(z)\geq 2(t-1-t)+2=0$.
\end{enumerate}
\end{proof}

\begin{cor}
Let $r\geq 1$ and let $0< x <3^r-1$ an integer. Then
 $$
[14x+A_r]_r\leq [x]_r+[2x+A_r]_r+3.
$$
\end{cor}

\begin{proof}
If $r$ is even and $x=\frac{3^r-1}{4}$ or $x=\frac{3(3^r-1)}{4}$ then $[2x+A_r]_r=2r$ and the inequality is obvious. Otherwise, the $3$-adic expansion of $x$ contains two consecutive digits with are not $02$ or $20$. Multiplying $x$ by a suitable power of 3, we can assume that they are the last two digits.

Note that $2x+A_r$ has at most $r+1$ digits. If it has $r+1$, let $a\in\{1,2\}$ be the first one. Then $[2x+A_r]_r=[2x+A_r-a\cdot 3^r+a]=[2x+A_r+a]-a$. Since the last two digits of $x$ are not $02$ or $20$, the last two digits of $2x+A_r$ are not $21$ or $22$, so there is at most one digit carry in the sum $(2x+A_r)+a$. Therefore $[2x+A_r+a]-a\geq[2x+A_r]-2$. In any case, we get $[2x+A_r]_r\geq[2x+A_r]-2$.

So we have
$$
[14x+A_r]_r\leq [14x+A_r]\leq
$$
$$
\leq[x]+[2x+A_r]+1\leq[x]_r+[2x+A_r]_r+3.
$$
\end{proof}

We conclude the proof of (\ref{V-main3})by using the numerical Hasse-Davenport formula as in Theorem \ref{3 x 13}.
\end{proof}

\section{Determination of some finite complex linear groups}

\begin{thm}\label{simple1}
Let $V = \CC^{12}$ and let $G < \sG :=\GL(V)$ be a finite irreducible subgroup. Suppose that all the following conditions hold:
\begin{enumerate}[\rm(i)]
\item $V$ is primitive and tensor indecomposable;
\item $G$ contains a subgroup $N$ of the form $N = C_3^5 \rtimes C_{11}$, with $C_{11}$ acting nontrivially on $Q:= \bfO_3(N) = C_3^5$.
\end{enumerate}
Then $G = \bfZ(G)H$ with $H \cong 6.\Suz$ in one of its two (up to equivalence) complex conjugate irreducible representations of degree $12$.
\end{thm}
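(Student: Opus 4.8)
The plan is to use the structure theory of finite irreducible primitive complex linear groups to reduce to the almost quasisimple case, then to identify the quasisimple subgroup using both the degree $12$ and the subgroup $N$, and finally to rule out an outer part.

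\emph{Reduction to the almost quasisimple case.} Since $12$ is not a proper power $d^{k}$ with $k\ge2$, the module $V$ cannot be tensor induced; a nontrivial normal subgroup of symplectic type would (as $12$ is not a prime power) force a tensor decomposition of $V$, contrary to (i); and (i) also rules out imprimitivity. Hence the structure theory forces $G$ to be almost quasisimple: $F^{\ast}(G)=\bfZ(G)L$ for a quasisimple $L\le G$, the quotient $S:=L/\bfZ(L)$ is a nonabelian simple group, $V|_{\bfZ(G)L}$ is absolutely irreducible (so $L$ has a faithful irreducible $12$-dimensional complex representation), and $G/\bfZ(G)L$ embeds in $\Out(S)$.

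\emph{Bringing in $N$.} Since $3$ has order $5$ in $(\Z/11\Z)^{\times}$, the nontrivial irreducible $\F_{3}[C_{11}]$-modules have dimension $5$; as $C_{11}$ has prime order, its nontrivial action on the $5$-dimensional space $Q$ is faithful, irreducible and fixed-point-free, so $\bfZ(N)=1$. Hence $N\cap\bfZ(G)\le\bfZ(N)=1$, so $N$ embeds in $G/\bfZ(G)$, which has $S\cong\bfZ(G)L/\bfZ(G)$ as a normal subgroup of index dividing $|\Out(S)|$. Intersecting, $S$ contains a subgroup of index at most $|\Out(S)|$ of a copy of $N\cong C_{3}^{5}\rtimes C_{11}$; this forces $S$ to nearly contain $C_{3}^{5}$ and an element of order $11$, so in every case that survives the next step, $3^{5}$ and $11$ divide $|S|$ and $S$ contains $C_{3}^{5}$ normalized by an element of order $11$.

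\emph{Identifying $L$.} I would then run through the Hiss--Malle classification of quasisimple groups with a faithful irreducible complex representation of degree exactly $12$. Candidates whose order is divisible by $11$ but whose Sylow $3$-subgroup is too small --- e.g.\ $M_{11},M_{12},M_{22}$, the groups $\PSL_{2}(q)$ for $q\in\{11,13,23,25\}$, and $\PSL_{3}(3)$ --- cannot contain $C_{3}^{5}$, so are excluded. Any $\PSL_{2}(q)$ with $3^{5}\bigm|\,|S|$ must have $q=3^{a}$ with $a\ge5$, hence least faithful degree $(q-1)/2\ge121\neq12$, so is excluded. The alternating group $\Alt_{13}$ (the unique $\Alt_{n}$ with a $12$-dimensional irreducible) is excluded because an elementary abelian $3$-subgroup of $\SSS_{13}$, having orbits of size $1$ or $3$ on $13$ points, has rank at most $\lfloor13/3\rfloor=4$, so $C_{3}^{5}\not\le\Alt_{13}$. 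The remaining entries of the list have order prime to $11$. This leaves $L\cong6.\Suz$ with $V$ one of its two complex conjugate faithful $12$-dimensional irreducibles; the occurrence is consistent with the maximal subgroup $3^{5}{:}M_{11}$ of $\Suz$, in which an order-$11$ element of $M_{11}$ acts fixed-point-freely on the $5$-dimensional $\F_{3}$-module, producing an $N$ of the required form.

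\emph{No outer part, and the main obstacle.} Finally, $\Out(\Suz)\cong C_{2}$, and its nontrivial element inverts the central subgroup of order $3$ of $6.\Suz$, hence interchanges the two faithful $12$-dimensional representations (they differ on that $C_{3}$). But for each $g\in G$ the conjugate of the $L$-module $V$ by $g$ is isomorphic to $V$ via the action of $g$, so $G$ induces no outer automorphism of $L$; therefore $G/\bfZ(G)L=1$ and $G=\bfZ(G)L=\bfZ(G)H$ with $H=L\cong6.\Suz$, as claimed. I expect the main obstacle to be the identification step: assembling the precise list of quasisimple groups with a $12$-dimensional complex irreducible and eliminating each one, and making the $3$-local deductions coming from $N$ rigorous --- a point of care being groups such as $\PSL_{3}(4)$ or $\PSU_{4}(3)$ whose outer automorphism group has order divisible by $3$, even though they are in the end ruled out because $11\nmid|S|$.
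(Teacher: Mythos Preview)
Your proof is correct and follows essentially the same route as the paper's: reduce to the almost quasisimple case via primitivity and tensor indecomposability (the paper invokes \cite[Prop.~2.8 and Lemma~2.5]{G-T} for this step), use condition (ii) to embed $C_3^5$ into the almost simple quotient, run through the Hiss--Malle list in degree $12$, and finally rule out the outer automorphism of $\Suz$ because it swaps the two faithful $12$-dimensional irreducibles of $6.\Suz$.

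The one substantive difference is the elimination criterion. The paper observes that $Q \cong C_3^5$ maps injectively into $H/\bfZ(H) \le \Aut(S)$, so $\Aut(S)$ has $3$-rank at least $5$, and then checks this single inequality against every candidate $S$ on the Hiss--Malle list (it fails for all except $\Suz$). Your version instead splits between divisibility of $|S|$ by $11$ and the $3$-rank of $S$ itself; this works, but as you correctly flag at the end, it requires extra care whenever $3$ divides $|\Out(S)|$, since then one cannot immediately conclude $C_3^5 \le S$ from $C_3^5 \le G/\bfZ(G)$. Working in $\Aut(S)$ throughout, as the paper does, sidesteps that wrinkle and makes the case analysis uniform. (A few of the groups you list as examples, e.g.\ $\PSL_2(13)$, $\PSL_2(25)$, $\PSL_3(3)$, actually have order prime to $11$, and $M_{11}$, $M_{22}$ do not appear on the degree-$12$ list at all; these are harmless slips since you eliminate them anyway.)
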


\begin{proof}
(a) By the assumption, the $G$-module $V$ is irreducible, primitive and tensor indecomposable. Since $\dim(V) = 12$, it cannot be tensor induced. 
Hence, we can apply \cite[Proposition 2.8]{G-T} to obtain a finite subgroup $H < \SL(V)$ with $\bfZ(\sG)G=\bfZ(\sG)H$ which is almost quasisimple, that is, 
$S \lhd H/\bfZ(H) \leq \Aut(S)$ for some finite non-abelian simple group $S$. By \cite[Lemma 2.5]{G-T}, the layer $L=E(H)$ (which in this case is 
just the last term of the derived series of the complete inverse image of $S$ in $H$) is a finite quasisimple group acting irreducibly on $V$, whence
$\bfZ(L) \leq \bfZ(H)$ by Schur's Lemma.

Condition (ii) implies that the subgroup $C_{11}$ of $N$ acts irreducibly on $Q$ (considered as an $\F_3$-module), and so it acts 
fixed-point-freely on $Q \smallsetminus \{1\}$. In particular, $Q \cap \bfZ(\sG) = 1$.
By the construction of $H$ in the proof of \cite[Proposition 2.8]{G-T}, it contains the subgroup
$$Q_1 := \{ \al g \in \SL(V) \mid g \in Q, \al \in \C^\times \}$$ 
such that $\bfZ(\sG)Q = \bfZ(\sG)Q_1$. It follows that
$$Q_1/(Q_1 \cap \bfZ(H)) = Q_1/(Q_1 \cap \bfZ(\sG)) \cong Q/(Q \cap \bfZ(\sG)) \cong Q \cong C_3^5,$$
which implies that the almost simple group $H/\bfZ(H) \leq \Aut(S)$ has $3$-rank at least 5.

\smallskip
(ii) Applying the main result of \cite{H-M} to $L$, we now arrive at one of the following possibilities.

$\bullet$ $S = \Alt_{13}$, $\Alt_6$, $\SL_3(3)$, $\PSL_2(11)$, $\PSL_2(13)$, $\PSL_2(23)$, $\PSL_2(25)$, $\SU_3(4)$, $\PSp_4(5)$, $G_2(4)$, or  $M_{12}$. In all of these cases, the $3$-rank of $\Aut(S)$ is less than 5, see \cite{ATLAS}, a contradiction.

$\bullet$ $L = 6.\Suz$. In this case, since outer automorphisms of $L$ do not fix the isomorphism class of any complex irreducible representation 
of degree $12$ of $L$ (in fact, it fuses the two central elements of order $3$ of $L$ which act nontrivially on $V$), we see that $H/\bfC_H(L) \cong L/\bfZ(L)$, and so $H = \bfZ(H)L$ and $L= [L,L] = [H,H]$. As $\bfZ(\sG)G = \bfZ(\sG)H$, we conclude that $G = \bfZ(G)L$, as stated. 
\end{proof}
    
\begin{thm}\label{simple2}
Let $V = \CC^{24}$ and let $G < \GO(V)$ be a finite irreducible subgroup. Suppose that all the following conditions hold:
\begin{enumerate}[\rm(i)]
\item $V$ is primitive and tensor indecomposable;
\item $G$ contains a subgroup $N$ of the form $N = C_2^{11} \rtimes C_{23}$, with $C_{23}$ acting nontrivially on $Q := \bfO_2(N) \cong C_2^{11}$.
\end{enumerate}
Then $G \cong 2.\Co_1$ in its unique (up to equivalence) irreducible representation of degree $24$.
\end{thm}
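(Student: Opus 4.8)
The plan is to run the argument of Theorem~\ref{simple1} with $(\dim V,p)=(24,2)$ replacing $(12,3)$; throughout write $\sG:=\GL(V)$. First I would note that $24$ is not a proper power $a^b$ ($a,b\ge 2$), so the irreducible $G$-module $V$ cannot be tensor induced; it is also primitive and tensor indecomposable by hypothesis. Hence \cite[Proposition 2.8]{G-T} yields a finite almost quasisimple $H<\SL(V)$ with $\bfZ(\sG)G=\bfZ(\sG)H$, and by \cite[Lemma 2.5]{G-T} its layer $L=E(H)$ is quasisimple, irreducible on $V$, with $\bfZ(L)\le\bfZ(H)$. Set $S:=L/\bfZ(L)$, so $S\lhd H/\bfZ(H)\le\Aut(S)$; by Schur's lemma $\bfC_H(L)=\bfZ(H)$ is the group of scalars in $H$, and since $G<\GO(V)$ every scalar in $G$ squares to $1$, so that $\bfZ(L)\le\{\pm I\}$ has order at most $2$.

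\emph{The $2$-rank constraint.} Next I would exploit (ii). The multiplicative order of $2$ modulo $23$ is $11$, so a nontrivial $C_{23}$-action on $Q\cong C_2^{11}$ is irreducible over $\F_2$, hence fixed-point-free on $Q\smallsetminus\{1\}$; in particular $Q\cap\bfZ(\sG)=1$. Exactly as in the construction of $H$, the subgroup $Q_1:=\{\al g\in\SL(V)\mid g\in Q,\ \al\in\C^\times\}$ lies in $H$ with $\bfZ(\sG)Q=\bfZ(\sG)Q_1$, so $Q_1/(Q_1\cap\bfZ(H))\cong Q\cong C_2^{11}$, forcing the $2$-rank of the almost simple group $H/\bfZ(H)\le\Aut(S)$ to be at least $11$.

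\emph{The classification step.} Now I would feed $L$ into the classification \cite{H-M} of finite quasisimple groups with a faithful irreducible complex representation of degree $24$ and run down the resulting list. Every candidate $S\ne\Co_1$ should fall: in most cases $\Aut(S)$ has $2$-rank $<11$ (read off from \cite{ATLAS} and from standard facts about elementary abelian $2$-subgroups of groups of Lie type and of alternating groups), in others $23\nmid|S|$ or $|\bfZ(L)|>2$, and in the remaining ones $S$ cannot contain a subgroup $N=C_2^{11}\rtimes C_{23}$ with $C_{23}$ irreducible on $Q$. This last obstruction handles, for example, $S=\Alt_{25}$: the $\F_2[C_{23}]$-irreducibility of $Q$ makes $Q$ the unique subgroup of $N$ of index $23$ and leaves $N$ with no subgroup of index $2,4,8,16$, so every nontrivial transitive permutation action of $N$ on at most $25$ points has degree $23$ and kernel $Q$; thus $N$ has no faithful permutation representation of degree $\le 25$ and so $N\not\le\Alt_{25}$. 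I expect this enumeration to be the crux: no individual elimination is hard, but the list is long, just as in the companion treatments of $2.\Co_2$ and $2.\Co_3$ in \cite{Ka-RL-T-Co2,Ka-RL-T-Co3}. The only survivor is $L=2.\Co_1$ in its (unique, rational) irreducible representation of degree $24$.

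\emph{Identifying $G$.} Finally, since $\Out(\Co_1)=1$ and the Schur multiplier of $\Co_1$ is $C_2$, one has $\Out(2.\Co_1)=1$, hence $H/\bfZ(H)=H/\bfC_H(L)=\mathrm{Inn}(L)\cong\Co_1$; therefore $H=\bfZ(H)L$, $[H,H]=L$, and $\bfZ(\sG)G=\bfZ(\sG)H=\bfZ(\sG)L$. In the degree-$24$ representation the central involution of $L$ acts as $-I$ and, $L$ being perfect and the invariant form symmetric, $L\le\SO(V)$; consequently $\bfZ(\sG)L\cap\GO(V)=\{\pm I\}L=L$, and $G<\GO(V)$ gives $G\le L$. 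Then $\bfZ(\sG)G=\bfZ(\sG)L$ forces $L=\{\pm I\}G$, so the projection $G\to L/\{\pm I\}\cong\Co_1$ is onto with kernel $G\cap\{\pm I\}$; this kernel cannot be trivial, since otherwise $G\cong\Co_1$ would act faithfully on $\CC^{24}$, which is impossible because the smallest faithful complex representation of $\Co_1$ has degree $276$. Hence $-I\in G$, $|G|=2|\Co_1|=|L|$, and $G=L\cong 2.\Co_1$, as claimed.
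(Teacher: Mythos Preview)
Your argument is essentially the paper's: rule out tensor induction, invoke \cite[Proposition 2.8]{G-T}, use $N$ to force $2$-rank $\ge 11$ in the almost simple quotient, run through the Hiss--Malle list in degree $24$, and eliminate $\Alt_{25}$ via a permutation-degree count. The one structural difference is that the paper applies \cite[Proposition 2.8]{G-T} directly to $G$ rather than to an auxiliary $H<\SL(V)$: since $G<\GO(V)$ already gives $\bfZ(G)\le\{\pm I\}$, one takes $L=E(G)\le G$, and once $L=2.\Co_1$ with $\Out(\Co_1)=1$, the conclusion $G=\bfZ(G)L=L$ is a single line.

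Your detour through $H$ is valid but forces the longer final paragraph, and it causes a small slip: in your setup $L\le H$, not $L\le G$, so the inference ``$G<\GO(V)$, hence $\bfZ(L)\le\{\pm I\}$'' does not follow as written ($H$ may contain scalars of order $>2$). This is harmless, since you do not actually use that bound before identifying $L=2.\Co_1$, at which point $|\bfZ(L)|=2$ automatically. In the same spirit, before writing $\bfZ(\sG)L\cap\GO(V)=L$ you should say why $L$ preserves the given quadratic form on $V$: this holds because the degree-$24$ representation of $2.\Co_1$ is orthogonal and irreducible, so its invariant symmetric form is unique up to scalar and hence agrees with the one fixed by $G$.
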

  
\begin{proof}
(a) By the assumption, the $G$-module $V$ is irreducible, primitive and tensor indecomposable. Since $\dim(V) = 24$, it cannot be tensor induced. 
Hence, $G$ is almost quasisimple by \cite[Proposition 2.8]{G-T}, and so $S \lhd G/\bfZ(G) \leq \Aut(S)$ for some finite non-abelian simple group $S$. By \cite[Lemma 2.5]{G-T}, the layer $L=E(G)$ (which in this case again is 
the last term of the derived series of the complete inverse image of $S$ in $G$) is a finite quasisimple group acting irreducibly on $V$, whence
$\bfZ(L) \leq \bfZ(G) \leq C_2$ by Schur's Lemma.

Condition (ii) implies that the subgroup $C_{23}$ of $N$ acts irreducibly on $Q$ (considered as an $\F_2$-module), and so it acts 
fixed-point-freely on $Q \smallsetminus \{1\}$. In particular, $Q \cap \bfZ(G) = 1$.
It follows that
$$Q/(Q \cap \bfZ(G)) \cong Q \cong C_2^{11},$$
which implies that the almost simple group $G/\bfZ(G) \leq \Aut(S)$ has $2$-rank at least 11.

\smallskip
(ii) Applying the main result of \cite{H-M} to $L$, we now arrive at one of the following possibilities.

$\bullet$ $S = \Alt_7$, $\Alt_8$, $\PSL_3(4)$, $\SU_4(2)$, $\PSp_4(7)$, $\PSL_2(23)$, $\PSL_2(25)$, $\PSL_2(47)$, or 
$\PSL_2(49)$. In all of these cases, the $2$-rank of $\Aut(S)$ is less than 11, see \cite{ATLAS}, a contradiction.

$\bullet$ $L = \Alt_{25}$. Recalling that $\bfZ(G) \leq C_2$ and that $\Out(S) \cong C_2$ in this case, we see that $N/(N \cap \bfZ(G))$ contains a 
subgroup $C_{23} < S$ that acts nontrivially on $Q/(Q \cap \bfZ(G)) \cong C_2^{11}$. This implies that $S$ contains a subgroup
$N_1$ with $Q_1 := \bfO_2(N_1) \cong C_2^{11}$ and $N_1/Q_1 \cong C_{23}$ acting irreducibly on $Q_1$ (considered as an $\F_2$-module).
In turn, the latter implies that $C_{23}$ acts fixed-point-freely on $\Irr(Q_1) \smallsetminus \{1_{Q_1}\}$, and so any transitive permutation action of
$N_1$ with nontrivial $Q_1$-action must be on at least $1+23 = 24$ symbols. Now consider the natural action of $N_1 < S \cong \Alt_{25}$ on 
$25$ letters. This must admit at least one orbit $\Omega$ with nontrivial $Q_1$-action, and so $24 \leq |\Omega| \leq 25$ by the previous assertion.
But this is a contradiction, since neither $24$ nor $25$ divides $|N_1| = 2^{11} \cdot 23$.   

$\bullet$ $L = 2.\Co_1$. In this case, since $\Out(S) = 1$ (see \cite{ATLAS}), we conclude that $G = L$, as stated. 
\end{proof}

\section{Determination of the monodromy groups}
 \begin{thm}\label{2.Co_1}For the lisse sheaf $\sH(\psi,3 \times 13)$ on $\G_m/\F_4$, we have $G_{geom}=G_{arith} = 2.\Co_1$ in its unique 
 (up to equivalence)  $24$-dimensional irreducible representation (as the automorphism group of the Leech lattice).
 \end{thm}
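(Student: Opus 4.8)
The plan is to verify the hypotheses of Theorem \ref{simple2} for $G = G_{geom}$ acting on $V = \sH(\psi,3\times 13)$, which will immediately yield $G_{geom} \cong 2.\Co_1$ in its $24$-dimensional representation; then I would bootstrap from $G_{geom}$ to $G_{arith}$. First I would recall that Theorem \ref{3 x 13} gives that both $G_{geom}$ and $G_{arith}$ are finite, and Lemma \ref{det} (the one asserting $G_{geom} \subset \SO_{24}$ with $G_{arith} \subset \GO_{24}$) places $G_{geom}$ inside $\GO(V)$ with $V = \CC^{24}$; thus $G_{geom}$ is a finite irreducible subgroup of $\GO(V)$, the irreducibility being the geometric irreducibility of the hypergeometric sheaf. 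So the real content is checking conditions (i) and (ii) of Theorem \ref{simple2}.

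For condition (i), primitivity should follow from Corollary \ref{ppprim}: the sheaf has type $(n,m) = (24,1)$, and $24 = (A-1)(B-1) = 2\cdot 12$ is not a power of $p=2$, so by Corollary \ref{pprim} the sheaf is not geometrically induced, hence $G_{geom}$ acts primitively (a primitive linear group in the monodromy sense is exactly one whose underlying sheaf is not geometrically induced, i.e. not induced from a proper finite-index subgroup of $\pi_1$). For tensor indecomposability I would invoke Theorem \ref{tensor}: here $n = 24$, which is neither $4$ nor an even power of $p = 2$ (the even powers of $2$ are $4, 16, 64, \dots$ and $24$ is none of these), so case (i) of Theorem \ref{tensor} applies and $\sH$ is tensor indecomposable as a $\pi_1(\G_m/\overline{\F_2})$-representation, a fortiori for $G_{geom}$.

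For condition (ii), I would produce the subgroup $N = C_2^{11} \rtimes C_{23}$ inside $G_{geom}$ from the image of wild inertia $I(\infty)$. By Lemma \ref{inertiaimage} applied with $N = n - m = 24 - 1 = 23$ (which is prime to $p = 2$, and odd, and one checks $\prod\chi_i/\prod\rho_j = \triv$ since the product of all the $\chi_i\rho_j$ is trivial as in Lemma \ref{det}), the image of $P(\infty)$ is the Pontryagin dual of the additive group of $\F_{2^f}$, where $f$ is the multiplicative order of $2$ mod $23$; since $2$ is a primitive root mod $23$ (order $22$), we get $f = 11$, so the image of $P(\infty)$ is $\cong \F_{2^{11}} \cong C_2^{11}$, and the image of $I(\infty)/P(\infty)$ acts through $\mu_{23}(\F_{2^{11}}) \cong C_{23}$ by cyclically permuting the $23$ characters — equivalently, $C_{23}$ acts irreducibly (indeed fixed-point-freely on the nonzero elements) on this $C_2^{11}$. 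Thus $N := (\text{image of the relevant subgroup of } I(\infty)) \cong C_2^{11} \rtimes C_{23} \leq G_{geom}$ with the required nontriviality, giving condition (ii). Theorem \ref{simple2} then yields $G_{geom} \cong 2.\Co_1$ in its $24$-dimensional irreducible representation.

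Finally, to get $G_{arith} = G_{geom}$: we have $G_{geom} \lhd G_{arith}$ with $G_{arith}$ finite and $G_{arith} \subset \GO_{24}$. Since $G_{geom} \cong 2.\Co_1 = L$ is its own derived group and equals its own normalizer modulo scalars (more precisely, $\Out(\Co_1) = 1$ and the $24$-dimensional representation is the unique one of its degree, so $N_{\GL_{24}}(G_{geom}) = \bfZ(\GL_{24})\cdot G_{geom}$), the conjugation action of $G_{arith}$ on $G_{geom}$ forces $G_{arith} \subseteq \bfZ(\GL_{24})\cdot G_{geom}$. But $G_{arith} \subset \GO_{24}$ and the traces of $\sH(\psi,3\times 13)$ are $\Q$-valued (Lemma \ref{tracefield} together with the $p=2$ refinement in Lemma \ref{det}), so the scalars that can occur lie in $\{\pm 1\}$; and $-1 \in G_{geom} = 2.\Co_1$ already (it is the central involution). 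Hence $G_{arith} = G_{geom} = 2.\Co_1$.

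The main obstacle I anticipate is not any single step but assembling the correct normalization in Lemma \ref{inertiaimage}: one must check that the tame character $\prod_i \chi_i/\prod_j \rho_j$ attached to $\sH(\psi,3\times 13)$ is indeed trivial (so that the "$N$ odd" hypothesis of Lemma \ref{inertiaimage} is met), and that $2$ has full multiplicative order $11$ modulo... wait — here $N = 23$ and one needs $\ord_{(\Z/23)^\times}(2) = 11$, which holds since $2^{11} = 2048 \equiv 1 \pmod{23}$ while $2$ has even order dividing $22$ and $2^2, 2^{11} \ne \cdots$; this is the arithmetic input that makes the wild inertia image exactly $C_2^{11}$ rather than something smaller. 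The bootstrap $G_{arith} = G_{geom}$ is then essentially formal given $\Out(\Co_1) = 1$.
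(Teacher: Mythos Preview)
Your approach is essentially the same as the paper's: verify the hypotheses of Theorem \ref{simple2} using finiteness (Theorem \ref{3 x 13}), orthogonality (Lemma \ref{det}), primitivity (Corollary \ref{pprim}, since $24$ is not a power of $2$), tensor indecomposability (Theorem \ref{tensor}, since $24$ is neither $4$ nor an even power of $2$), and the $C_2^{11}\rtimes C_{23}$ from Lemma \ref{inertiaimage}. One slip: you write ``$2$ is a primitive root mod $23$ (order $22$)'', which is false --- $2$ has order $11$ modulo $23$ (as you yourself verify at the end), and it is precisely this that gives $\F_2(\mu_{23})=\F_{2^{11}}$ and hence the image $C_2^{11}$.

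For the passage from $G_{geom}$ to $G_{arith}$ you take a slightly different route than the paper. The paper simply notes that every hypothesis verified for $G_{geom}$ holds \emph{a fortiori} for the larger group $G_{arith}$ (if the subgroup $G_{geom}$ stabilizes no block or tensor decomposition, neither does $G_{arith}$; and $N\le G_{geom}\le G_{arith}$), so Theorem \ref{simple2} applies directly to $G_{arith}$ as well, forcing $G_{geom}=G_{arith}$ since both are finite and isomorphic to $2.\Co_1$ with one contained in the other. Your normalizer argument via $\Out(\Co_1)=1$ and rationality of traces is correct too, and is in fact the style of argument the paper uses in the companion Theorem \ref{6.Suz} (where scalars genuinely need to be ruled out); here the paper's direct route is a bit cleaner.
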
              
 \begin{proof}Choose (!) an embedding of $\overline{\Q}_{\ell}$ into $\C$. We will show that the result follows from Theorem \ref{simple2}.
 
 From Lemma \ref{det}, we have
 $$G_{geom} \lhd G_{arith} \subset \GO_{24}(\C).$$
 Because $\sH(\psi,3 \times 13)$ is geometrically irreducible, $G_{geom}$ (and a fortiori $G_{arith}$) is an irreducible subgroup of $\GO_{24}(\C)$. By Theorem \ref{3 x 13}, $G_{arith}$ (and a fortiori $G_{geom}$) is a finite subgroup. By Proposition \ref{indecompose} and Corollary \ref{pprim},  $G_{geom}$ (and a fortiori $G_{arith}$) is tensor indecomposable and primitive.
 
 By Lemma \ref{inertiaimage}, the image of the wild inertia group $P(\infty)$ is the Pontrayagin dual of the additive group of the field $\F_2(\mu_{23}) = \F_{2^{11}}$, acting as the direct sum of the $23$ characters $\sL_{\psi(23\zeta x)}$, indexed by $\zeta \in \mu_{23}$. The  group $I(\infty)/P(\infty)$ acts through its cyclic quotient $\mu_{23}$, with a primitive $23^{\mathrm {rd}}$ root of unity cyclically permuting the  $\sL_{\psi(23\zeta x)}$. Thus $G_{geom}$ (and a fortiori $G_{arith}$) contains the required $N = C_2^{11} \rtimes C_{23}$ subgroup.
 
 \end{proof}
 
 \begin{thm}\label{6.Suz}For each of the lisse sheaves $\sH(\psi,4 \times 5)$ and $\sH(\psi,28^\times)$ on $\G_m/\F_9$, we have
 $$G_{geom} =G_{arith} =6.\Suz$$
 in one of its two (up to equivalence) complex conjugate irreducible representations of degree $12$.
 \end{thm}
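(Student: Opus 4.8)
The plan is to argue in exact parallel with the proof of Theorem~\ref{2.Co_1}, using Theorem~\ref{simple1} in place of Theorem~\ref{simple2}, and to treat the two sheaves uniformly; write $\sH$ for either $\sH(\psi,4\times 5)$ or $\sH(\psi,28^\times)$, a lisse sheaf of rank $12$ on $\G_m/\F_9$ (with $p=3$) which is geometrically a constant-field twist of a hypergeometric sheaf of type $(12,1)$ with trivial ``downstairs'' character. First I would fix an embedding $\overline{\Q}_\ell\hookrightarrow\C$ and assemble the geometric facts about $G:=G_{geom}$ and $G_{arith}$. By Lemma~\ref{detagain} (for $\sH(\psi,4\times 5)$), resp.\ Lemma~\ref{detter} (for $\sH(\psi,28^\times)$), we have $G\lhd G_{arith}$ with $G\subseteq\SL_{12}(\C)$ and $G_{arith}\subseteq\GL_{12}(\C)$; geometric irreducibility makes $G$, hence $G_{arith}$, an irreducible subgroup; and Theorems~\ref{4 x 5} and \ref{28^times} make $G_{arith}$, hence $G$, finite. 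Since $12$ is not a power of $3$, Corollary~\ref{pprim} shows $\sH$ is not geometrically induced, so $G$ is primitive; and since $12$ is neither $4$ nor an even power of $3$, Theorem~\ref{tensor}(i) shows $G$ is tensor indecomposable. Both properties pass a fortiori to the overgroup $G_{arith}$, since any $G_{arith}$-stable system of imprimitivity, or $G_{arith}$-stable tensor decomposition, of $V$ would be $G$-stable; and as $12$ is not a proper perfect power, $V$ is not tensor induced either.

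Next I would produce the subgroup required by hypothesis~(ii) of Theorem~\ref{simple1}, using Lemma~\ref{inertiaimage}. The wild part of the $I(\infty)$-representation of $\sH$ has rank $n-m=11$, which is odd and prime to $3$, and $\prod_i\chi_i/\prod_j\rho_j=\triv$: the single ``downstairs'' character is $\triv$, while the product of the twelve ``upstairs'' characters is $\triv$ --- for $\sH(\psi,4\times 5)$ because $(\prod_{\chi\neq\triv,\,\chi^{4}=\triv}\chi)^{4}(\prod_{\rho\neq\triv,\,\rho^{5}=\triv}\rho)^{3}=\chi_2^{4}\cdot\triv=\triv$, and for $\sH(\psi,28^\times)$ because the twelve characters of order $28$ are stable under inversion and none equals $\chi_2$. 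The multiplicative order of $3$ modulo $11$ is $5$, so $\F_{3^5}=\F_3(\mu_{11})$, and Lemma~\ref{inertiaimage} then gives that the image of $P(\infty)$ in $G$ is isomorphic to $C_3^5$ (the Pontryagin dual of $(\F_{3^5},+)$), acting as $\bigoplus_{\zeta\in\mu_{11}}\sL_{\psi_{11}(\zeta x)}$, while $I(\infty)/P(\infty)$ acts through its cyclic quotient $\mu_{11}\cong C_{11}$, a primitive $11$th root of unity cyclically permuting these eleven characters. Hence the image of $I(\infty)$ in $G$ is a subgroup $N=C_3^5\rtimes C_{11}$ with $C_{11}$ acting nontrivially --- necessarily irreducibly, since the only $5$-dimensional $\F_3[C_{11}]$-module with nontrivial action is irreducible --- on $\bfO_3(N)=C_3^5$. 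Thus $G$ and $G_{arith}$ both satisfy all the hypotheses of Theorem~\ref{simple1}.

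Applying Theorem~\ref{simple1} to $G_{arith}$ yields $G_{arith}=\bfZ(G_{arith})\cdot H$ with $H\cong 6.\Suz$ in one of its two complex conjugate $12$-dimensional irreducible representations; here $\bfZ(G_{arith})$ consists of scalars by Schur's Lemma, and, since $H\lhd G_{arith}$, the scalar subgroup $\bfZ(H)=\mu_6$ lies in $\bfZ(G_{arith})$. To see there is nothing more, I would use the trace field: if $\lambda I_{12}\in\bfZ(G_{arith})$ for a root of unity $\lambda$, then by the Chebotarev density theorem for the finite quotient $G_{arith}$ of $\pi_1(\G_m/\F_9)$ the central conjugacy class $\{\lambda I_{12}\}$ is the Frobenius class of some $s\in\G_m(\F_{9^e})$, whence $12\lambda$ is a value of the trace function of $\sH$ and therefore lies in $\Q(\zeta_3)$ by Lemma~\ref{tracefield}, resp.\ Lemma~\ref{tracefieldbis}; this forces $\lambda\in\Q(\zeta_3)$, i.e.\ $\lambda\in\mu_6$. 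So $\bfZ(G_{arith})=\mu_6\subseteq H$ and $G_{arith}=H\cong 6.\Suz$. Finally, Theorem~\ref{simple1} applied to $G$ exhibits a copy of $6.\Suz$ inside $G$, and $6.\Suz\subseteq G\subseteq G_{arith}=6.\Suz$ forces $G=G_{arith}=6.\Suz$ in the asserted representation.

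The step I expect to be the main obstacle is the determination of $\bfZ(G_{arith})$ in the third paragraph. In the $2.\Co_1$ case the ambient orthogonal group contributes only the scalars $\{\pm 1\}=\bfZ(2.\Co_1)$, so nothing extra is needed there; here, by contrast, $\bfZ(6.\Suz)=C_6$ lies properly inside the scalar subgroup $\mu_{12}$ of $\SL_{12}(\C)$, and the determinant cannot tell them apart (every twelfth root of unity is a scalar of determinant $1$ on $\C^{12}$), so one genuinely has to import the arithmetic input --- the $\Q(\zeta_3)$-rationality of the trace function --- to exclude the a priori possible group $\mu_{12}\cdot 6.\Suz$. The one other point deserving care is the passage of primitivity and tensor indecomposability from $G_{geom}$ to $G_{arith}$, which is the elementary ``a fortiori'' remark recorded in the first paragraph.
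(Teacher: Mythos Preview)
Your proposal is correct and follows essentially the same approach as the paper: reduce to Theorem~\ref{simple1} by assembling finiteness, irreducibility, primitivity, tensor indecomposability, and the $C_3^5 \rtimes C_{11}$ subgroup from $I(\infty)$, then kill extra scalars using the $\Q(\zeta_3)$-rationality of traces. Your version is in fact more careful than the paper's in two places: you explicitly verify the hypothesis $\prod_i\chi_i/\prod_j\rho_j=\triv$ of Lemma~\ref{inertiaimage} for each sheaf, and you invoke Chebotarev to justify that a central scalar actually occurs as a Frobenius trace (the paper simply asserts ``the traces of $G_{arith}$ lie in $\Q(\zeta_3)$'').
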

 \begin{proof}In this case, the result follows from Theorem \ref{simple1}. Just as in the proof of Theorem \ref{2.Co_1}, we see that both $G_{geom}$ and $G_{arith}$ are finite, irreducible, primitive,
 tensor indecomposable subgroups of $\GL_{12}(\C)$.
 
  By Lemma \ref{inertiaimage}, the image of the wild inertia group $P(\infty)$ is the Pontrayagin dual of the additive group of the field $\F_3(\mu_{11}) = \F_{3^{5}}$, acting as the direct sum of the $11$ characters $\sL_{\psi(11\zeta x)}$, indexed by $\zeta \in \mu_{11}$. The  group $I(\infty)/P(\infty)$ acts through its cyclic quotient $\mu_{11}$, with a primitive $11^{\mathrm {th}}$ root of unity cyclically permuting the  $\sL_{\psi(11\zeta x)}$. Thus $G_{geom}$ (and a fortiori $G_{arith}$) contains the required $N = C_3^{5} \rtimes C_{11}$ subgroup.
  
 Therefore by Theorem \ref{simple1}, the group $G_{arith}$ (and the group $G_{geom}$) is the group $6.\Suz$, augmented by some finite group of scalars. If $\beta$ is a scalar contained in
 $G_{arith}$, then $12\beta$ is its trace in the given $12$-dimensional representation of $G_{arith}$. But the traces of $G_{arith}$ lie in
 $\Q(\zeta_3)$, and thus $\beta$ lies in $\Q(\zeta_3)$. But $\beta$ is a root of unity, hence lies in $\mu_6$. But $\mu_6$ lies in 
$6.\Suz$, and thus $G_{arith}$ is $6.\Suz$, and a fortiori $G_{geom}$, which contains ``fewer" scalars, is also $6.\Suz$.
\end{proof}

\begin{rmk}
To see that $2.\Co_1$ actually contains $C_2^{11} \rtimes C_{23}$, note that $2.\Co_1$ contains $C_2^{12} \rtimes M_{24} > C_2^{12} \rtimes C_{23}$, see
\cite{ATLAS}. Next, as a $C_{23}$-module, $C_2^{12}$ is semisimple with a $1$-dimensional fixed point subspace, leading to the decomposition
$C_2^{12} \rtimes C_{23} = (C_2^{11} \rtimes C_{23}) \times C_2$. The same argument, using a maximal subgroup $C_3^5 \rtimes C_{11}$ of 
$\Suz$ \cite{ATLAS} shows that the full inverse image of this subgroup in $6.\Suz$ splits as $(C_3^5 \rtimes C_{11}) \times C_6$, and so 
$6.\Suz$ contains $C_3^5 \rtimes C_{11}$.
\end{rmk}

\section{Pullback to $\A^1$}
We begin by stating the simple (and well known) lemma that underlies the constructions of this section.
\begin{lem}\label{pullback}Let $\sH$ be a local system of $\G_m/\F_q$ whose local monodromy at $0$ is of finite order $M$ prime to $p$. For $N$  any  prime to $p$ multiple of  $M$, consider the pullback local system
 $$\sG(N):=[N]^\star \sH := [x \mapsto x^N]^\star \sH$$
 on $\G_m/\F_q$. Then we have the following results.
 \begin{enumerate}[\rm(i)]
 \item The local system $\sG(N)$ on $\G_m/\F_q$ has a unique extension to a local system on $\A^1/\F_q$, call it $\sG_0(N)$.
 \item The local systems $\sG(N)$ on $\G_m/\F_q$ and $\sG_0(N)$ on $\A^1/\F_q$ have the same $G_{arith}$ as each other, and
 the same $G_{geom}$ as each other.
 \item We have inclusions
 $$G_{arith,\sG(N)}  < G_{arith,\sH},\ \ G_{geom,\sG(N)} \lhd G_{geom,\sH},$$
 and  the quotient
 $$G_{geom,\sH}/G_{geom,\sG(N)}$$
 is a cyclic group of order dividing $N$.
 \end{enumerate}
\end{lem}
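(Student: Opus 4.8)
The plan is to handle the three parts in turn, reducing everything to standard facts: the behavior of $[N]$ as a $\mu_N$-cover, the surjectivity of $\pi_1$ under a dense open immersion, and the passage between a monodromy image and its Zariski closure. Write $\rho\colon\pi_1(\G_m/\F_q)\to\GL(V)$ for the monodromy representation of $\sH$ and $j\colon\G_m\hookrightarrow\A^1$ for the open immersion.

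For (i), I would first compute the local monodromy of $\sG(N)$ at $0$. Since the $I(0)$-action on $\sH$ has finite order $M$ prime to $p$ it is tame, hence — as a representation of the procyclic tame quotient of $I(0)$ — a direct sum $\oplus_i\sL_{\chi_i}$ of Kummer characters with $\chi_i^M=\triv$. Because $[N]^\star\sL_\chi\cong\sL_{\chi^N}$ and $M\mid N$, each $\chi_i^N=\triv$, so $\sG(N)$ has trivial local monodromy at $0$. Consequently $j_\star\sG(N)$ is lisse on $\A^1$ (its stalk $\sG(N)^{I(0)}$ at $0$ already has the full generic rank), and I set $\sG_0(N):=j_\star\sG(N)$. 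Uniqueness is immediate: any lisse extension $\sE$ of $\sG(N)$ over $\A^1$ satisfies $\sE=j_\star j^\star\sE=j_\star\sG(N)$, since $\G_m$ is dense and $\sE$ is lisse.

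For (ii), the key point is that $\pi_1(\G_m)\twoheadrightarrow\pi_1(\A^1)$, both arithmetically and geometrically: a connected finite \'etale cover of $\A^1$ restricts over $\G_m$ to a dense open of an integral scheme, hence stays connected. As $\sG(N)=\sG_0(N)|_{\G_m}$, its monodromy representation is that of $\sG_0(N)$ precomposed with these surjections, so the images of the fundamental groups, and a fortiori their Zariski closures $G_{arith}$ and $G_{geom}$, are unchanged. For the arithmetic part of (iii), I would use that $[N]\colon\G_m\to\G_m$ is finite \'etale of degree $N$ (here $p\nmid N$) and connected, so $[N]_\star$ embeds $\pi_1(\G_m/\F_q)$ as a subgroup of index $N$; since the monodromy of $\sG(N)=[N]^\star\sH$ is $\rho\circ[N]_\star$, its image lies inside $\rho(\pi_1(\G_m/\F_q))$, and taking closures gives $G_{arith,\sG(N)}\subseteq G_{arith,\sH}$.

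For the geometric part of (iii), over $\overline{\F_q}$ the map $[N]$ is Galois with group $\mu_N\cong\Z/N$, so $[N]_\star$ realizes $\pi_1(\G_m/\overline{\F_q})$ as a normal subgroup of index $N$ in itself with cyclic quotient. Put $\Gamma:=\rho(\pi_1(\G_m/\overline{\F_q}))$ and $\Gamma':=\rho([N]_\star\pi_1(\G_m/\overline{\F_q}))$, so $\Gamma'\lhd\Gamma$ with $\Gamma/\Gamma'$ cyclic of order dividing $N$, and $G_{geom,\sH}=\overline{\Gamma}$, $G_{geom,\sG(N)}=\overline{\Gamma'}$ (Zariski closures in $\GL(V)$). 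Conjugation by any element of $\Gamma$ fixes $\Gamma'$, hence fixes $\overline{\Gamma'}$, so $\Gamma$ — and therefore $\overline{\Gamma}$, since the normalizer of $\overline{\Gamma'}$ is Zariski closed — normalizes $\overline{\Gamma'}$; thus $\overline{\Gamma'}\lhd\overline{\Gamma}$. Finally, the image of $\Gamma$ in $\overline{\Gamma}/\overline{\Gamma'}$ is a quotient of $\Gamma/\Gamma'$ (because $\Gamma'\subseteq\Gamma\cap\overline{\Gamma'}$), hence finite cyclic of order dividing $N$; being Zariski dense, it is all of $\overline{\Gamma}/\overline{\Gamma'}$, which is therefore cyclic of order dividing $N$. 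The only steps that are not purely routine are this last piece of bookkeeping — promoting ``normal with cyclic quotient of order dividing $N$'' from the possibly infinite image $\Gamma$ to its Zariski closure — and applying the identity $[N]^\star\sL_\chi\cong\sL_{\chi^N}$ correctly to the local monodromy at $0$ in (i); I expect the former to be the main (mild) obstacle, everything else being standard.
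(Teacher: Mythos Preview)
Your proof is correct and follows essentially the same route as the paper's: trivial local monodromy at $0$ giving $j_\star$ as the (unique) lisse extension for (i), surjectivity of $\pi_1$ under the open immersion (the paper phrases this as ``birational invariance'') for (ii), and the $\mu_N$-Galois structure of $[N]$ for (iii). Your treatment is actually more careful than the paper's one-line ``Galois theory'' for (iii): you explicitly promote the normality and cyclic-quotient statements from the monodromy images $\Gamma'\lhd\Gamma$ to their Zariski closures, which is the right thing to do since $G_{geom}$ is by definition a Zariski closure, and the paper simply elides this step.
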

\begin{proof}(i) If such an extension exists, it must be $j_\star \sG(N)$ for $j:\G_m \subset \A^1$ the inclusion. This direct image is
lisse at $0$ precisely because the local monodromy of $ \sG(N)$ at $0$ is trivial. (ii) is simply the fact that $G_{geom}$ and $G_{arith}$ are birational invariants. (iii) is Galois theory, and the fact that the extension $\overline{\F_q}(x^{1/N})/\overline{\F_q}(x)$ is Galois, with cyclic Galois group $\mu_N(\overline{\F_q})$.
\end{proof}

\begin{thm}We have the following results.
\begin{enumerate}[\rm(i)]
\item The pullback local system $[39]^\star \sH(\psi,3 \times 13)$ on $\A^1/\F_4$ has $G_{geom}=G_{arith} = 2.\Co_1$.
\item The pullback local system $[20]^\star \sH(\psi,4 \times 5)$ on $\A^1/\F_9$ has $G_{geom}=G_{arith} = 6.\Suz$.
\item The pullback local system $[28]^\star \sH(\psi,28^\times)$ on $\A^1/\F_9$ has $G_{geom}=G_{arith} = 6.\Suz$.
\end{enumerate}
\end{thm}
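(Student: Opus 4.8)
The plan is to deduce all three statements from Lemma \ref{pullback}, combined with the fact, established in Theorems \ref{2.Co_1} and \ref{6.Suz}, that the monodromy groups of $\sH(\psi,3\times 13)$, $\sH(\psi,4\times 5)$, $\sH(\psi,28^\times)$ are the quasisimple groups $2.\Co_1$, $6.\Suz$, $6.\Suz$ respectively. The only input specific to the three sheaves is the order $M$ of the local monodromy at $0$. For $\sH(\psi,3\times 13)$ the ``upstairs'' characters are the products $\chi\rho$ with $\ord(\chi)=3$ and $\ord(\rho)=13$, hence all of order $39$, and they are pairwise distinct; so the (semisimple) local monodromy at $0$ is $\oplus_i\chi_i$, of order $M=39$, prime to $p=2$. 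For $\sH(\psi,4\times 5)$ the ``upstairs'' characters $\chi\rho$ have $\ord(\chi)\in\{2,4\}$ and $\ord(\rho)=5$, so their orders are $10$ or $20$ and $M=20$, prime to $p=3$. For $\sH(\psi,28^\times)$ the ``upstairs'' characters all have order $28$, so $M=28$, prime to $p=3$. In each case the pullback exponent $N\in\{39,20,28\}$ is exactly $M$, hence a prime-to-$p$ multiple of $M$, so Lemma \ref{pullback}(i) applies: $[N]^\star\sH$ extends uniquely to a local system on $\A^1$, and by Lemma \ref{pullback}(ii) this $\A^1$-local system has the same $G_{geom}$ and the same $G_{arith}$ as $[N]^\star\sH$ on $\G_m$.

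Next I would run the group-theoretic step. By Lemma \ref{pullback}(iii), $G_{geom,[N]^\star\sH}$ is a normal subgroup of $G_{geom,\sH}$ with cyclic quotient of order dividing $N$. Both $2.\Co_1$ and $6.\Suz$ are quasisimple, since $\Co_1$ and $\Suz$ are simple and these are perfect central extensions. In a quasisimple group $G$ with centre $Z$, every normal subgroup is either contained in $Z$ or equal to $G$: if $M\lhd G$ and $M\not\leq Z$, then $MZ/Z$ is a nontrivial normal subgroup of the simple group $G/Z$, so $MZ=G$, whence $G=[G,G]=[MZ,MZ]=[M,M]\leq M$. Consequently the only cyclic quotient of $2.\Co_1$ is the trivial group (the quotient by $Z=C_2$ is $\Co_1$, which is not cyclic), and the only cyclic quotient of $6.\Suz$ is the trivial group (the quotients by the nontrivial subgroups $C_2$, $C_3$, $C_6$ of the centre are $3.\Suz$, $2.\Suz$, $\Suz$, none of which is cyclic). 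Therefore $G_{geom,[N]^\star\sH}=G_{geom,\sH}$ in all three cases.

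Finally, Lemma \ref{pullback}(iii) also gives $G_{geom,[N]^\star\sH}\lhd G_{arith,[N]^\star\sH}<G_{arith,\sH}$, and by Theorems \ref{2.Co_1} and \ref{6.Suz} we have $G_{arith,\sH}=G_{geom,\sH}$; since the two ends of this chain are now equal, $G_{arith,[N]^\star\sH}=G_{geom,\sH}$ as well. Combining with Lemma \ref{pullback}(ii), the $\A^1$-local systems $[39]^\star\sH(\psi,3\times 13)$, $[20]^\star\sH(\psi,4\times 5)$, $[28]^\star\sH(\psi,28^\times)$ have $G_{geom}=G_{arith}$ equal to $2.\Co_1$, $6.\Suz$, $6.\Suz$ respectively. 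I do not expect a genuine obstacle here: the only point requiring care is the computation of the local-monodromy orders $M$ at $0$, so as to know that $N=M$ is an admissible pullback degree; after that, the elementary normal-subgroup structure of quasisimple groups does everything, and in particular neither the explicit trace functions nor the Leech lattice enter the argument.
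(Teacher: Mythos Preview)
Your proof is correct and follows essentially the same approach as the paper: apply Lemma \ref{pullback} and use the structure of $2.\Co_1$ and $6.\Suz$ to see that the cyclic quotient in part (iii) must be trivial, then squeeze $G_{arith}$ between $G_{geom}$ and $G_{arith,\sH}$. The paper's argument for the normal-subgroup step is slightly leaner than yours: rather than classifying normal subgroups of a quasisimple group and checking which quotients are cyclic, it simply observes that $2.\Co_1$ and $6.\Suz$ are perfect, so any quotient by a proper normal subgroup is nonabelian (since $[G,G]=G$ forces $H=G$ whenever $G/H$ is abelian). On the other hand, you explicitly verify the order $M$ of the local monodromy at $0$ in each case, which the paper leaves implicit; this is a useful addition.
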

 \begin{proof}For $G$ either of the groups $6.\Suz$ or $2.\Co_1$, $G$ is a perfect group, and hence contains no proper normal subgroup $H \lhd G$ for which $G/H$ is abelian. So in each case listed, it results from Lemma \ref{pullback}(iii) above that $G_{geom}$ remains unchanged, equal to $G$, when we pass from $\sH$ to its pullback $\sG(N)$. From the inclusion $G_{arith,\sG(N)}  < G_{arith,\sH}$, we have the a priori inclusion $G_{arith,\sG(N)}  < G$. Thus we have
 $$G=G_{geom,\sG(N)} \lhd G_{arith,\sG(N)} < G.$$
 \end{proof}         
 \begin{rmk} Although the hypergeometric sheaves in question are rigid local systems on $\G_m$, we do not see any reason their pullbacks to $\A^1$ need be rigid local systems on $\A^1$.
 \end{rmk}       
 
 \section{Appendix: Another approach to tensor indecomposability}
 
\begin{prop}\label{indecompose1}Let $V$ be a representation of $I$ which is the direct sum $T \oplus W$ of a nonzero tame representation $T$ (i.e., one on which $P$ acts trivially) and of an
irreducible representation $W$ which is totally wild (i.e., one in which $P$ has no nonzero invariants). Then we have
the following results.
\begin{enumerate}[\rm(i)]
\item If $\dim(V)$ is not a square,
then $V$ is linearly tensor indecomposable as a  representation of $I$: there do not exist representations $V_1$ and $V_2$ of $I$, each of dimension $\ge2$ and an isomorphism of representations $V_1\otimes V_2 \cong V$. 
\item If $\dim(V)$ is a square which is neither $4$ nor an even power of $p$, then $V$ is linearly tensor indecomposable as a  representation of $I$.
\item If $\dim(V)$ is an even power of $p$ and $\dim(T) \ge 2$,  then $V$ is linearly tensor indecomposable as a  representation of $I$.
\end{enumerate}
\end{prop}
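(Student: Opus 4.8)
The plan is to follow the proof of Theorem \ref{indecompose} almost verbatim, exploiting the fact that in a \emph{linearly} tensor decomposable situation one is handed honest $I$-representations $V_1,V_2$, so that no passage to a universal cover and no juggling of projective representations is needed. So I would argue by contradiction: assume there is an isomorphism of $I$-representations $V\cong V_1\otimes V_2$ with $\dim V_1,\dim V_2\geq 2$. Restricting to the wild inertia $P$ (a pro-$p$ group, whose image in any $\GL_N(\overline{\Q}_\ell)$ is a finite $p$-group, so all $P$-constituents occurring below have $p$-power degree), the hypotheses on $V$ give $V|_P=(\dim T)\cdot\triv\oplus W|_P$; and since $W$ is \emph{irreducible} over $I$ while $I/P$ is procyclic, Clifford theory (the cyclic-quotient case, exactly as invoked in Theorem \ref{indecompose}) shows that $W|_P$ is the multiplicity-free sum $\lambda_1\oplus\cdots\oplus\lambda_s$ of the distinct members of a single $I/P$-orbit of nontrivial irreducible $P$-representations; in particular each $\dim\lambda_k$ is a power of $p$.

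Next I would write $V_1|_P=a\cdot\triv\oplus\bigoplus_{i=1}^m\alpha_i$ and $V_2|_P=b\cdot\triv\oplus\bigoplus_{j=1}^n\beta_j$ with the $\alpha_i,\beta_j$ nontrivial irreducible, the multisets $\{\alpha_i\}$, $\{\beta_j\}$ being $I/P$-stable by conjugacy, and compare $V|_P=(V_1|_P)\otimes(V_2|_P)$ with the decomposition above. The first block of the argument reduces everything to the case $a=b=0$: if $m=0$ then $V|_P$ is $a$ copies of $V_2|_P$, which forces $W=0$ or $\dim V_1=1$, both absurd, so $m\geq1$, and symmetrically $n\geq1$; and if $a\geq1$ together with $m\geq1$, then $\{\alpha_i\}$ being $I/P$-stable with support inside the single orbit $\{\lambda_k\}$ forces $\bigoplus\alpha_i$ to exhaust $W|_P$ already with multiplicity one, after which the nonzero, purely-wild term $a\bigoplus\beta_j$ has nowhere to go in $V|_P$ — a contradiction. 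Hence $a=0$, and symmetrically $b=0$, so $V_1|_P$ and $V_2|_P$ are both totally wild.

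With $a=b=0$ the remaining analysis is precisely steps (iv)--(vi) of the proof of Theorem \ref{indecompose}: $I/P$-stability forces $V_1|_P$ to be $e$ copies of the sum over one $I/P$-orbit $\{\alpha_1^{x^{i-1}}:1\leq i\leq k\}$, and, since $\triv$ occurs in $V_1\otimes V_2$, one may take $V_2|_P=f\bigoplus_i\overline{\alpha_1}^{\,x^{i-1}}$ over the (equally long) orbit of $\overline{\alpha_1}$. If $k\geq2$, then $\alpha_1\overline{\alpha_1}^{\,x}$ has no trivial constituent, which together with multiplicity-freeness forces $ef=1$; counting trivial constituents then gives $\dim T=k$ and, with $d:=\dim\alpha_1$, the relation $k^2d^2=\dim V\leq k+kd^2$, whence $k=2$, $d=1$, $\dim V=4$; and a two-element orbit produces a quotient of order $2$ of the pro-(prime-to-$p$) group $I/P$, so $p\neq2$. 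If $k=1$, then $V_1|_P=e\alpha_1$ and $V_2|_P=f\overline{\alpha_1}$; here $\dim\alpha_1=1$ would force $W=0$, so $\dim\alpha_1>1$, and comparing the multiplicity of a nontrivial constituent of $\alpha_1\overline{\alpha_1}$ gives $ef=1$, so $\dim V=(\dim\alpha_1)^2$ is an even power of $p$ and $\dim T=[\alpha_1\overline{\alpha_1}:\triv]=1$. In the first case $\dim V=4$ with $p$ odd; in the second $\dim V$ is an even power of $p$ with $\dim T=1$. Each possibility contradicts each of the hypotheses: the dimension is a perfect square, excluded by (i); it is either $4$ or an even power of $p$, excluded by (ii); and $\dim T=1<2$ while in the $\dim V=4$, $p$-odd case $\dim V$ is not a power of $p$, so (iii) cannot hold either. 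This contradiction proves the proposition.

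The routine parts are the restrict-and-compare manipulations with $P$-representations. The one genuinely delicate step — and the only place the numerical hypotheses enter — is the last paragraph: one must first pin down that $V_1|_P$ and $V_2|_P$ are each isotypic along a single $I/P$-orbit, and then carry out the degree and trivial-multiplicity count, i.e.\ reproduce steps (iv)--(vi) of the proof of Theorem \ref{indecompose} in this (slightly simpler, linear) setting. I also expect to need, as there, the observation that a $2$-element orbit of $I/P$ cannot occur when $p=2$, which is what turns the $\dim V=4$ possibility into a genuine contradiction with all three hypotheses.
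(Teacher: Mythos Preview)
Your argument is sound in outline and reaches the right dichotomy, but it follows a genuinely different route from the paper's own proof of this proposition. The paper works at the level of $I$-representations throughout: it splits each $V_i$ into its tame and totally wild parts as $I$-modules, reduces in three short steps to the case where both $V_i$ are totally wild, then decomposes each into $I$-irreducibles and uses a separate key lemma (Lemma~\ref{alltame}: if the tensor product of two wild $I$-irreducibles is entirely tame, both factors are one-dimensional) to force each $V_i$ to be a single $I$-irreducible. This lands in the ``$\End$ situation'' $V\otimes\overline\chi\cong W_1\otimes W_1^\vee$, which is then analyzed via the $P$-isotypical decomposition of $W_1$, splitting on whether $\dim W_1$ is prime to $p$, a mixed product, or a pure $p$-power. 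Your route instead restricts to $P$ at the outset and reruns the character-theoretic argument of Theorem~\ref{indecompose}; this avoids Lemma~\ref{alltame} and the $\End$ reformulation at the cost of that theorem's Clifford bookkeeping. Both approaches arrive at the same conclusion (either $\dim V=4$ with $p$ odd, or $\dim V$ an even power of $p$ with $\dim T=1$), so both are valid.

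One point to tighten: in your reduction to $a=b=0$, the assertion that the $\alpha_i$ have support inside the orbit $\{\lambda_k\}$ is only immediate when $b\ge1$ (so that $b\bigoplus_i\alpha_i$ sits visibly inside $W|_P$). When $b=0$ you must instead argue via $a\bigoplus_j\beta_j\subseteq W|_P$ to get $a=1$ and $\bigoplus_j\beta_j=W|_P$, and then observe that the remaining piece $\bigoplus_{i,j}\alpha_i\otimes\beta_j$ is forced to be entirely trivial, which (exactly as in the last subcase of step (iii) of Theorem~\ref{indecompose}) makes all $\alpha_i,\beta_j$ one-dimensional and mutually inverse, whence $\dim V_2=1$. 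As written, your paragraph skips this subcase.
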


\begin{proof}
We argue by contradiction, assuming we have $V_1\otimes V_2 \cong V$. Replacing each of $V_1, V_2, V$ by its semisimplification, we may further assume each is $I$-semisimple. As $P$ is normal in $I$ (or because the image of $P$ in
any continuous $\ell$-adic representation is finite), each of these representations is $P$-semisimple as well.

We have canonical decompositions $V_1$ and $V_2$ into direct sums
$$V_1 = T_1 \oplus W_1,\ \ V_2 =T_2 \oplus W_2,$$
where the $T_i$ are tame representations of $I$, and the $W_i$ are totally wild representations of $I$.

{\it Step 1.} All four of $T_1, T_2, W_1, W_2$ cannot be nonzero. If they were, then $V_1\otimes V_2$ would contain the sum of
$T_1\otimes W_2$ and $T_2 \otimes W_1$, each of which is a nonzero totally wild representation of $I$, contradicting that $W$ is irreducible.

{\it Step 2.} We cannot have $V_1 = T_1$. For then the wild part $W$ of $V$ is $T_1\otimes W_2$, so by irreducibility of  $W$ the dimension of $V_1= T_1$ is $1$.

{\it Step 3.} We cannot have $V_1=W_1$ and $T_2$ nonzero. In this case, we would have
$$T\oplus W \cong T_2\otimes W_1 \oplus W_1\otimes W_2.$$
From the irreduciblility of $W$, we see that $\dim(T_2)$ must be $1$, and that $W_1\otimes W_2$ must be entirely tame.

{\it Step 4.} Thus we must have $V_1 =W_1$ and $V_2 =W_2$. Write each of $W_1, W_2$ as a sum of $I$-irreducibles, say
$$W_1=\sum_i W_{1,i},\ \ W_2 =\sum_j W_{2,j}.$$
Then $V_1 \otimes V_2 =W_1 \otimes W_2$ is 
$$\sum_{i,j}W_{1,i}\otimes W_{2,j}.$$
Of these $\sum_{i,j}W_{1,i}\otimes W_{2,j}$, precisely one summand fails to be totally tame, for the wild part of  $V_1 \otimes V_2$, which is irreducible, is the sum of the wild parts of the $W_{1,i}\otimes W_{2,j}$. We then invoke the following lemma.

\begin{lem}\label{alltame}Let $W_1$ and $W_2$ be irreducible, totally wild representations of $I$. If $W_1\otimes W_2$ is entirely tame, then 
$\dim(W_1)=\dim(W_2) =1$ and $W_2^\vee \cong W_1\otimes (\rm{some\ tame\ character \ }\chi)$.
\end{lem}
\begin{proof}Decompose each of the $W_i$ into its $P$-isotypical components. By \cite[1.14.2]{Ka-GKM}, we know that each isotypical component is $P$-irreducible. Thus as $P$-representations, we have
$$W_1 = \sum_i N_i, \  \  W_2 = \sum_j M_j,$$
with the $N_i$ and the $M_j$ each  $P$-irreducible.
Then $W_1\otimes W_2$ is $\sum_{i,j}N_i\otimes M_j$.
In the tensor product $N_i\otimes M_j$ of two irreducible representations, the trivial representation occurs either not at all, or just once, and it occurs precisely when $M_j \cong  N_i^\vee$. To say that $W_1\otimes W_2$ is entirely tame is to say that each $N_i\otimes M_j$ is entirely trivial as $P$-representation, or in other words that each $N_i\otimes M_j$ is both one-dimensional and trivial. For this to
hold, each $N_i$ and each $M_j$ has dimension $1$, and $M_j \cong N_i^\vee$. for every pair $(i,j)$. Thus all the $M_j$ are isomorphic, each being $N_1^\vee$. Similarly, all the $N_i$ are isomorphic, each being $M_1^\vee$. But the various $P$-isotypical components of a given irreducible $W_i$ are pairwise nonisomorphic. Thus $W_1 =N_1$ and $W_2 =M_2$ are one-dimensional duals on $P$, so duals up to tensoring by a tame character on $I$.
\end{proof}

Returning to our situation
$$V_1 \otimes V_2 =\sum_{i,j}W_{1,i}\otimes W_{2,j},$$
we may renumber so that $W_{1,1}\otimes W_{2,1}$ is not totally tame, but all other $W_{1,i}\otimes W_{2,j}$ are totally tame.

Suppose now that $W_1$ is the sum of two or more irreducibles, then $V_1 \otimes V_2$ contains
$$(W_{1,1} +W_{1,2})\otimes W_{2,1},$$
and hence $W_{1,2}\otimes W_{2,1}$ is totally tame. By the above Lemma \ref{alltame}, $W_{2,1}$ is one dimensional, and
$$W_{2,1} \cong W_{1,2}^\vee \otimes (\rm{some\ tame\ character \ }\chi).$$
If $W_2$ is the sum of two or more irreducibles, then each product $W_{1,2})\otimes W_{2,j}$ must be totally tame, hence we have
$$W_{2,j} \cong W_{2,1}^\vee \otimes (\mbox{some tame character }\chi_j).$$
Thus $W_2$ is of the form
$$W_2 = (\rm{tame}\ \Tame_2, dim \ge 1)\otimes W_{2,1},$$
and $V_1 \otimes V_2$ contains
$$(W_{1,1} +W_{1,2})\otimes (\Tame_2 \otimes W_{2,1}).$$
In particular $V_1 \otimes V_2$ contains $\dim(\Tame_2)$ pieces of the form 
$$W_{1,1}\otimes W_{2,1}\otimes (\rm{some\ tame\ character}),$$
none of which is totally tame. Therefore $\Tame_2$ is one-dimensional, hence $W_2$ is one-dimensional, i.e., $V_2$ is
one-dimensional, contradiction.

Thus $W_1$ is a single irreducible. Repeating the argument with $W_1$ and $W_2$ interchanged, $W_2$ must also be
a single irreducible. If $W_1 \otimes W_2$ has a nonzero tame part, say contains a tame character $\chi$,
then $W_1 \otimes W_2\otimes \overline{ \chi}$ contains $\triv$, and hence 
$$W_2 \cong W_1^\vee \otimes \chi.$$
But $W_1 \otimes W_2$ also has a nonzero (in fact irreducible) wild part, hence $\dim(W_1) \ge 2$ (otherwise $W_1 \otimes W_2$ will be $\chi$ alone). Thus $\dim(V_1) =\dim(V_2) = \dim(W_1)$, and $\dim(V)$ is a square.

We now examine the situation in which $\dim(V)$ is a square $n^2$. Thus 
$$V \cong W_1\otimes W_2 =\End(W_1)\otimes \chi,$$
i.e., $$V \otimes \overline{ \chi} \cong \End(W_1).$$
Now $V \otimes \overline{ \chi}$ is itself the sum of a nonzero tame part and an irreducible totally wild part, and $\dim(W_1)=n$. So the question becomes, when is it possible that for a $W$ of dimension $n$, $\End(W)$ is the sum of a nonzero tame part and an irreducible totally wild part. Let us refer to this as ``the $\End$ situation". This is the situation we would like to rule out.

We first show that if $n$ is prime to $p$, the $\End$ situation can only arise when $n=2$.
Denote by $I(n) \lhd I$ the unique open subgroup of index $n$. Thus $I/I(n) \cong \mu_n$.
Then $W$ is the sum of $n$ $P$-isotypical components $N_i$, each of which is one dimensional, stable by $I(n)$, and each of which is $P$-inequivalent to any of its nontrivial multiplicative translates $\mbox{MultTransl}_\zeta(N_i)$ by nontrivial $n$'th roots of unity $\zeta$. If we fix one of them, say $N :=N_1$, then as $P$-representation 
$$W \cong \oplus_{\zeta \in\mu_n}\mbox{MultTransl}_\zeta(N),$$
and hence as $P$-representation
$$\End(W) \cong \bigoplus_{(\zeta_1,\zeta_2) \in \mu_n \times \mu_n} \mbox{MultTransl}_{\zeta_1}(N)\otimes \mbox{MultTransl}_{\zeta_2}(N^\vee).$$
Each of these $n^2$ pieces is $I(n)$-stable. The $n$ ``diagonal" summands 
$$ \mbox{MultTransl}_{\zeta}(N)\otimes \mbox{MultTransl}_{\zeta}(N^\vee)$$
are $P$-trivial, and their sum is the tame part of $\End(W_1)$. 
The remaining $n(n-1)$ summands can be put together into $n-1$ pieces, as follows.
Start with the $n-1$ summands
$$N \otimes \mbox{MultTransl}_{\zeta_1}(N^\vee), \zeta_1 \neq 1.$$
For each, form the sum
$$\bigoplus_{\zeta_2}\mbox{MultTransl}_{\zeta_2}(N \otimes \mbox{MultTransl}_{\zeta_1}(N^\vee)).$$
Each of these $n-1$ sums is $I$-stable and totally wild. [It is the induction from $I(n)$ to $I$ of  
$N \otimes \mbox{MultTransl}_{\zeta_1}(N^\vee)$.] Thus we have at least $n-1$ totally wild constituents in $V \otimes \overline{ \chi}$.
But its wild part is irreducible, which is only possible if $n-1=1$, i.e., if $n=2$.

We next show that if $n=n_0q$ with $n_0$ prime to $p$ and $q$ a strictly positive power of $p$, the $\End$ situation can only arise if $n_0=1$. We argue by contradiction. Suppose, then, that $n_0 > 1$.
Denote by $I(n_0) \lhd I$ the unique open subgroup of index $n_0$. Thus $I/I(n_0) \cong \mu_{n_0}$.
Then $W$ is the sum of $n_0$ $P$-isotypical components $N_i$, each of which is $q$- dimensional, $P$-irreducible, stable by $I(n_0)$, and each of which is $P$-inequivalent to any of its nontrivial multiplicative translates $\mbox{MultTransl}_\zeta(N_i)$ by nontrivial $n_0$'th roots of unity $\zeta$. If we fix one of them, say $N :=N_1$, then as $P$-representation 
$$W \cong \oplus_{\zeta \in\mu_{n_0}}\mbox{MultTransl}_\zeta(N),$$
and hence as $P$-representation
$$\End(W) \cong \bigoplus_{(\zeta_1,\zeta_2) \in \mu_{n_0} \times \mu_{n_0}} \mbox{MultTransl}_{\zeta_1}(N)\otimes \mbox{MultTransl}_{\zeta_2}(N^\vee).$$
Each of these $n_0^2$ pieces is $I(n_0)$-stable. 

If $\zeta_1=\zeta_2$, the piece
$$\mbox{MultTransl}_{\zeta_1}(N)\otimes \mbox{MultTransl}_{\zeta_2}(N^\vee) =$$
$$=\mbox{MultTransl}_{\zeta_1}(N\otimes N^\vee)$$
is the direct sum of a single tame character with a totally wild part of dimension $q^2-1$ (simply because $N$ is $P$-irreducible). If If $\zeta_1\neq \zeta_2$, the piece
$$\mbox{MultTransl}_{\zeta_1}(N)\otimes \mbox{MultTransl}_{\zeta_2}(N^\vee) $$
is totally wild, of dimension $q^2$. Assembling these $n_0^2$ pieces into $n_0$ $I$-stable pieces as in the discussion of the prime to $p$ case, we get $n_0$ $I$-stable summands, each of which has a nonzero totally wild piece. But the totally wild part of $\End(W)$ is irreducible, contradiction.

Now we analyze the $\End$ situation when $n=q$ is a strictly positive power of $p$. Thus $W$ is $I$ irreducible of rank $q$. By \cite[1.14.2]{Ka-GKM}, $W$ is $P$-irreducible. Therefore the space $\End(W)^P$ of $P$-invariants in $\End(W)$ is one-dimensional,
which is to say that $\End(W)$ is the sum of a one-dimensional tame part and a totally wild part of dimension $q^2-1$.

\end{proof}

\begin{cor}\label{indecompose 2}Let $V$ be a representation of $I$ which is the direct sum $T \oplus W$ of a nonzero tame representation $T$ (i.e., one on which $P$ acts trivially) and of an
irreducible representation $W$ which is totally wild (i.e., one in which $P$ has no nonzero invariants). Suppose that $\dim(V)$ is neither $4$ nor an even power of $p$, or that $\dim(V)$ is an even power of $p$ but that its tame part $T$ has dimension $\ge 2$.  Suppose further that the representation of $I$ on $V$ factors through a finite quotient $\Gamma$ of I. Then $I$, or equivalently $\Gamma$, stabilizes no decomposition $V = A \otimes B$ with $\dim(A), \dim(B) > 1$.
\end{cor}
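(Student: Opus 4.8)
The plan is to reduce the statement to Proposition \ref{indecompose1} by upgrading ``tensor decomposable'' to ``linearly tensor decomposable'', the passage being controlled by the cohomological dimension of $I$. So I would argue by contradiction: suppose $I$ (equivalently $\Gamma$, since the action factors through $I \twoheadrightarrow \Gamma$) stabilizes a decomposition $V = A \otimes B$ with $\dim A, \dim B > 1$. Exactly as in part (i) of the proof of Theorem \ref{indecompose}, one writes, for each $g \in I$, $\Phi(g) = \Theta(g) \otimes \Psi(g)$ with $\Theta(g) \in \GL(A)$ and $\Psi(g) \in \GL(B)$, and checks that $g \mapsto \Theta(g)$ and $g \mapsto \Psi(g)$ are projective representations of $I$ with mutually inverse factor sets. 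Since $\Phi$ kills the kernel of $I \twoheadrightarrow \Gamma$ and $X \otimes Y = 1$ forces $X$ scalar (as recalled in the proof of Theorem \ref{indecompose}), the projective representation $\overline\Theta : I \to \PGL(A)$ factors through $\Gamma$; in particular its image is finite, and likewise for $\overline\Psi : I \to \PGL(B)$.

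The heart of the argument is then to lift $\overline\Theta$ to an honest linear representation $\tilde\Theta : I \to \SL(A)$. Setting $n = \dim A$ and letting $E \le \SL(A)$ be the preimage of the finite group $\overline\Theta(\Gamma)$ under $\SL(A) \twoheadrightarrow \PGL(A)$, the pullback of $E \to \overline\Theta(\Gamma)$ along $I \twoheadrightarrow \overline\Theta(\Gamma)$ is a central extension $1 \to \mu_n \to \hat I \to I \to 1$, classified by an element of $H^2(I, \mu_n)$. I would invoke the fact (already used in Remark \ref{howto}) that $I = I(\infty)$ has cohomological dimension $\le 1$ — the wild part $P$ being pro-$p$ has $\mathrm{cd}_\ell P = 0$ for $\ell \ne p$, the tame quotient $\prod_{\ell\ne p}\Z_\ell$ has $\mathrm{cd}_\ell \le 1$, and $\mathrm{cd}_p$ of the Galois group of a characteristic-$p$ local field is $\le 1$ — so that $H^2(I,\mu_n) = 0$ and the extension splits by a continuous homomorphic section. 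Composing gives $\tilde\Theta : I \to E \le \SL(A)$ lifting $\overline\Theta$; the same for $B$ yields $\tilde\Psi : I \to \SL(B)$ lifting $\overline\Psi$.

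Finally, since $\tilde\Theta(g)$ and $\Theta(g)$ have the same image in $\PGL(A)$ we have $\tilde\Theta(g) = \lambda(g)\Theta(g)$ and, similarly, $\tilde\Psi(g) = \mu(g)\Psi(g)$ for scalars $\lambda(g),\mu(g) \in \C^\times$; hence $\tilde\Theta(g)\otimes\tilde\Psi(g) = \chi(g)\Phi(g)$ with $\chi := \lambda\mu$, and comparing the two sides, both of which are homomorphisms $I \to \GL(V)$, shows $\chi$ is a continuous character of $I$. Thus $A \otimes B \cong V \otimes \chi$ as $I$-representations, so $V \cong A \otimes (B \otimes \chi^{-1})$ exhibits a \emph{linear} tensor decomposition of $V$ with both factors of dimension $> 1$. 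Since the hypotheses on $\dim(V)$ and $\dim(T)$ assumed here are exactly those of Proposition \ref{indecompose1}, this contradicts that proposition, completing the proof.

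The delicate point, and the one I would spend the most care on, is the lifting step: verifying $\mathrm{cd}(I)\le 1$ (especially its $p$-primary part, which rests on the characteristic-$p$ input) and the fact that a profinite central extension with trivial class in $H^2$ splits through a continuous homomorphism. Everything else is the bookkeeping of projective representations already carried out in Theorem \ref{indecompose} together with a direct appeal to Proposition \ref{indecompose1}.
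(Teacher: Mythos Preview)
Your proposal is correct and follows essentially the same route as the paper's own proof: obtain projective representations of $I$ on $A$ and $B$, use that the action factors through the finite quotient $\Gamma$ so that the relevant $2$-cocycles are locally constant (equivalently, your preimage-in-$\SL$ construction produces a profinite central extension by $\mu_n$), kill the obstruction via $\mathrm{cd}(I)\le 1$, and then twist one factor by the resulting linear character to get a genuine linear tensor decomposition, contradicting Proposition~\ref{indecompose1}. The paper cites \cite[Chapter II, 3.3, c)]{Serre-CG} for the cohomological dimension input and stresses (in Remark~\ref{whyfinite}) exactly the point you flag as delicate, namely that the finite-quotient hypothesis is what makes the $H^2$ argument go through.
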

\begin{proof}Suppose that  $\Gamma$ stabilizes such a decomposition $V = A \otimes B$. Then each $\gamma \in \Gamma$ acts on $A \otimes B$ as $X_\gamma \otimes Y_\gamma$, for some non-unique $X_\gamma \in \GL(A), Y_\gamma \in \GL(B)$. However, each of $X_\gamma, Y_\gamma$ is unique up to multiplication by an invertible scalar. To see this notice
that if
$$X_\gamma \otimes Y_\gamma=X^{'}_\gamma \otimes Y^{'}_\gamma,$$
then
$$(X^{'}_\gamma)^{-1}X_\gamma \otimes Y_\gamma (Y^{'}_\gamma)^{-1}$$
is the identity in $\GL(A\otimes B)$, which in turn implies that each of $(X^{'}_\gamma)^{-1}X_\gamma$ and $Y_\gamma (Y^{'}_\gamma)^{-1}$ is an invertible scalar, in $ \GL(A)$ and  $\GL(B)$ respectively.
Thus both $A$ and $B$ are projective representations of $\Gamma$. View these projective representations as projective representations of $I$. Then the cocycles defining them are locally constant functions on $I \times I$. The obstructions to linearizing them lie in
$H^2(I,\mu_{\dim(A)})$ and in $H^2(I,\mu_{\dim(B)})$ respectively. Both of these groups vanish, because $I$ has
cohomological dimension $\le 1$, cf. \cite[Chapter II, 3.3, c)]{Serre-CG}. Let us choose linear representations $\A$ and $\B$ which lift the projective representations $A$ and $B$. Then $\A \otimes \B$ and $V$ are equivalent as projective representations, hence they ``differ" by some linear character $\rho$ of $I$. 

Replacing $\B$ by $\B\otimes \rho^{-1}$,
we have an isomorphism $V \cong \A\otimes \B$ of $I$-representations. Now apply Proposition \ref{indecompose1} to see that this is impossible unless $\dim(V)$ is either $4$ or an even power of $p$.
\end{proof}

\begin{rmk}\label{whyfinite}In the absence of the hypothesis in Corollary \ref{indecompose 2} that the representation on $V$ factors through a finite quotient of $I$, we do not see how to guarantee that $A$ and $B$ as projective representations of $I$ are defined by locally constant cocycles. This local constance is what allows us to apply the fact that the profinite group $I$ has
cohomological dimension $\le 1$.
\end{rmk}

\end{document}